\renewcommand{\min}{\mathrm{min}}
\newcommand{\cl}{\mathrm{cl}}
\newcommand{\R}{\mathbb{R}}
\newcommand{\dom}{\mathrm{dom}}
\newcommand{\N}{\mathbb{N}}
\newcommand{\iso}{\mathrm{Iso}}
\newcommand{\U}{\mathbb{U}}
\newcommand{\conv}{\mathrm{conv}}
\newcommand{\id}{\mathrm{id}}
\newcommand{\cone}{\mathrm{cone}}
\newcommand{\rng}{\mathrm{rng}}
\newcommand{\ext}{\mathrm{ext}}
\newcommand{\Iso}{\mathrm{Iso}}
\newcommand{\F}{F}
\newcommand{\diam}{\mathrm{diam}}
\newcommand{\ball}{\mathrm{ball}}
\newcommand{\Q}{\mathbb{Q}}
\newcommand{\Sch}{\mathrm{Sch}}
\newcommand{\Perf}{F_p}
\newcommand{\ring}{R}
\newtheorem{theorem}{Theorem}[section]
\newtheorem{lemma}[theorem]{Lemma}
\newtheorem{corollary}[theorem]{Corollary}
\newtheorem{proposition}[theorem]{Proposition}
\newtheorem{claim}[theorem]{Claim}
\theoremstyle{definition}
\newtheorem{definition}[theorem]{Definition}
\newtheorem{question}[theorem]{Question}
\subjclass[2010]{03E15, 46L35, 46A55}
\begin{document}

\title[Completeness of isomorphism of C*-algebras]{Completeness of the isomorphism problem for
  separable C*-algebras}

\author{Marcin Sabok}\thanks{This research was partially
  supported by the NCN (the Polish National Science Centre)
  grant no. 2012/05/D/ST1/03206, by the Foundation for
  Polish Science and by MNiSW (the Polish Ministry of
  Science and Higher Education) grant no. 0435/IP3/2013/72.}

\address{Instytut Matematyczny Polskiej Akademii Nauk,
  ul. \'Sniadeckich 8, 00-956 Warszawa, Poland}
\address{Instytut Matematyczny Uniwersytetu Wroc\l awskiego,
  pl. Grunwaldzki 2/4, 50--384 Wroc\l aw, Poland}

\email{M.Sabok@impan.pl}

\begin{abstract}
  We prove that the isomorphism problem for separable
  nuclear C*-algebras is complete in the class of orbit
  equivalence relations. In fact, already the isomorphism of
  simple, separable AI C*-algebras is a complete orbit
  equivalence relation. This means that any isomorphism
  problem arsing from a continuous action of a separable
  completely metrizable group can be reduced to the
  isomorphism of simple, separable AI C*-algebras. As a
  consequence, we get that the isomorphism problems for
  separable nuclear C*-algebras and for separable
  C*-algebras have the same complexity. This answers
  questions posed by Elliott, Farah, Paulsen, Rosendal, Toms
  and T\"ornquist.
\end{abstract}

\maketitle

\section{Introduction}

Broadly speaking, a problem $P$ in a class $\Gamma$ is
called \textit{complete} in $\Gamma$ if any other problem in
$\Gamma$ can be reduced to $P$.  Complete problems typically
appear in logic and computer science, perhaps with the most
prominent examples of NP-complete problems.

In the continuous setting, a descriptive complexity theory
for problems arising as Borel and analytic equivalence
relations on standard Borel spaces, has been developed by
Kechris, Louveau, Hjorth and others
\cite{hkl,kl,kechris.scot,hjorth,hjorth.handbook} over the
last 30 years. The classification problems arising in this
setting are of the following form: given an analytic or
Borel equivalence relation $E$ on a standard Borel space
$X$, decide whether two points in $X$ are $E$-equivalent. Of
great interest here are the equivalence relations given by
Borel actions of separable completely metrizable (i.e.\
\textit{Polish}) groups on standard Borel spaces or,
equivalently, continuous actions of Polish groups on Polish
spaces (see \cite{becker.kechris}). Typically, isomorphism
problems arising in various areas of mathematics are easily
translated into this language. The relative complexity is
measured in terms of Borel reducibility: an equivalence
relation $E$ on $X$ is \textit{Borel reducible} to an
equivalence relation $F$ on $Y$ if there is a Borel map
$f:X\to Y$ such that $x_1\mathrel{E}x_2$ if and only if
$f(x_1)\mathrel{F}f(x_2)$ for every $x_1,x_2\in X$. The
meaning of this notion is that the function $f$, being
computable (Borel), gives a way of reducing the problem of
$E$-equivalence of points in $X$ to that of $F$-equivalence
of points in $Y$. We say that an equivalence relation $E$ is
\textit{complete} in a class $\Gamma$ of equivalence
relations if it belongs to $\Gamma$ and every relation $F$
in $\Gamma$ is Borel-reducible to $E$. For examples of
complete analytic equivalence relations see
\cite{lr,lrf}. Two relations $E$ and $F$ are
\textit{bi-reducible} if $E$ is Borel reducible to $F$ and
$F$ is Borel reducible to $E$. For group actions, also a
stronger notion (in the measure-theoretic context) is used:
two group actions on standard Borel measure spaces $X$ and
$Y$ are called \textit{orbit equivalent} if there is a Borel
isomorphism of $X$ and $Y$ which maps (a.e.) orbits to
orbits (see \cite{gaboriau}). We say that an equivalence
relation is an \textit{orbit equivalence relation} if it is
bi-reducible with an equivalence relation induced by a Borel
action of a Polish group. Descriptive complexity theory has
applications to various classification problems arising in
many areas and it has enjoyed spectacular successes, for
instance the striking result of Thomas \cite{thomas} on the
relative complexity of isomorphism problems for torsion-free
abelian groups or the results of Foreman, Rudolph and Weiss
\cite{foreman.rudolph.weiss} on the conjugacy problem in
ergodic theory.

The isomorphism problem for separable C*-algebras has been
studied since the work of Glimm in the 1960's and evolved
into the Elliott program that classifies C*-algebras via
their $K$-theoretic invariants. Glimm's result \cite{glimm},
restated in modern language, implies that the isomorphism
relation for UHF algebras is smooth (see \cite[Chapter
5.4]{gao}). In the 1970's the classification has been pushed
forward to AF algebras via the $K_0$ group
\cite{elliott.af}. The Elliott invariant, which consists of
the groups $K_0$ and $K_1$ together with the tracial simplex
and the pairing map, was conjectured (see
\cite{elliott.icm,elliott.toms}) to completely classify all
infinite-dimensional, separable, simple nuclear
C*-algebras. The conjecture has been verified for various
classes of C*-algebras, e.g certain classes of real rank
zero algebras, AH algebras of slow dimension growth or
separable, simple, purely infinite, nuclear algebras (modulo
the the universal coefficients theorem)
\cite{elliott.realrankzero,elliott.gong,
  elliott.gong.li,kirchberg.phillips,elliott.rordam,
  rordam.classification.I,kirchberg.rordam} and there have
been dramatic breakthroughs in the program, including the
counterexamples to the general classification conjecture
constructed by R\o rdam \cite{rordam} and Toms \cite{toms}.

Classification program of C*-algebras can be studied from
the point of view of descriptive complexity theory (cf
\cite{elliott.abstract}) and the framework has been set up
by Kechris \cite{kechris.alg} and Farah, Toms and
T\"ornquist \cite{ftt}. Next, generalizing the results of
Thomsen \cite{thomsen} and using Elliott's classification
for AI algebras \cite{elliott.ai}, Farah Toms and
T\"ornquist showed \cite[Corollary 5.2]{ftt} that the
isomorphism of separable simple AI algebras is an orbit
equivalence relation and is not classifiable by countable
structures (see \cite[Chapter 10]{gao}). Recently, the
former has been generalized by Elliott, Farah, Paulsen,
Rosendal, Toms and T\"ornquist \cite{efprtt}, who showed
that in fact, the isomorphism of separable C*-algebras is an
orbit equivalence relation. Farah, Toms and T\"ornquist
\cite{ftt} and Elliott, Farah, Paulsen, Rosendal, Toms and
T\"ornquist \cite{efprtt} asked what is the complexity of
the isomorphism problem of separable and separable nuclear
C*-algebras. Understanding the complexity of these problems
can shed new light to the whole classification problem. In
this paper we prove the following.

\begin{theorem}\label{main0}
  The isomorphism relation of separable $C^*$-algebras is
  complete in the class of orbit equivalence relations. In
  fact, already the isomorphism of separable nuclear (and
  even simple, separable, AI) C*-algebras is a complete
  orbit equivalence relation.
\end{theorem}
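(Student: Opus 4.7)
My plan is to establish the theorem by exhibiting, for some chosen universal orbit equivalence relation $E$, a Borel reduction from $E$ into isomorphism of simple, separable AI C*-algebras coded as in \cite{kechris.alg,ftt}. For the source $E$ I would take isometry of Polish metric spaces, which by a theorem of Gao and Kechris (see \cite[Chapter 14]{gao}) is a universal orbit equivalence relation: every orbit equivalence relation Borel-reduces to it. That isomorphism of simple separable AI C*-algebras is an orbit equivalence relation is already known from \cite{ftt}, so it suffices to produce the reduction.

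The reduction I would construct factors through affine homeomorphism of metrizable Choquet simplices. In the first step, given a Polish metric space $X$, the goal is to produce in a Borel way a metrizable Choquet simplex $\Delta(X)$ such that $X\cong X'$ as metric spaces iff $\Delta(X)\cong\Delta(X')$ as Choquet simplices. The naive assignment $X\mapsto M(X)$ only remembers the underlying topology, so one must enrich the simplex to record distances --- for instance by first replacing $X$ by a compact model (e.g.\ inside a Kat\v{e}tov-type extension of an Urysohn-like space) so that the resulting simplex is metrizable, and then attaching auxiliary convex caps indexed by pairs $\{x,y\}$ of extreme points whose affine dimensions or intrinsic sizes encode $d(x,y)$, arranged so that no spurious affine automorphism of the simplex can fail to preserve the coded distances. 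In the second step, using Thomsen's realization \cite{thomsen} together with Elliott's classification of AI algebras \cite{elliott.ai}, I would assign to each metrizable Choquet simplex $\Delta$ a simple separable AI algebra $A_\Delta$ built as an inductive limit of interval building blocks $\bigoplus_{i} C([0,1],M_{n_i})$, with connecting maps chosen so as to force simplicity, to reproduce $\Delta$ as the tracial simplex, and to fix the remaining Elliott invariants ($K_0$, order unit, pairing) by the construction. Elliott's theorem then guarantees $A_\Delta\cong A_{\Delta'}$ iff $\Delta\cong\Delta'$, and the composition of the two steps is the desired reduction.

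The main obstacle, I expect, is the first step: encoding the full metric (not just the topology) of a Polish space into the purely affine data of a Choquet simplex, in a Borel and faithful way with no spurious simplex automorphisms, requires genuinely new combinatorial input, and this is where the argument will stand or fall. The second step is a Borel, uniform version of a classical realization theorem: the algebra-level existence is well-developed, and the issue is only uniformity of the Bratteli-diagram data in $\Delta$ as a Borel parameter. Once the composite reduction is constructed, completeness of isomorphism of simple separable AI algebras upgrades to completeness of isomorphism of separable nuclear C*-algebras and of all separable C*-algebras, since these are orbit equivalence relations by \cite{ftt,efprtt} and the obvious inclusion of codes for simple AI algebras into codes for nuclear (resp.\ arbitrary) separable C*-algebras is itself a Borel reduction; hence every orbit equivalence relation Borel-reduces to them as well.
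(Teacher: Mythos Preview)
Your proposal is correct and follows essentially the same route as the paper: the paper also factors through affine homeomorphism of metrizable Choquet simplices, citing \cite[Corollary 5.2]{ftt} for the second step (simplices to simple separable AI algebras via Thomsen and Elliott) and devoting all of its new work to the first step, where a Polish metric space of bounded diameter is embedded (via a Kat\v{e}tov-type tower over the Urysohn sphere) into a simplex whose extreme boundary densely contains it, and then decorated with iterated cones over pairs of points to encode distances---precisely your ``auxiliary convex caps indexed by pairs $\{x,y\}$.'' One small correction: the paper does not pass to a compact model of $X$; metrizability of the simplex comes from using only countably many Lipschitz-$1$ coordinate functions, and the Kat\v{e}tov tower is there to guarantee enough such functions (saturation), not compactness.
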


A perhaps less sophisticated way of stating this result is
to say that any possible classification of separable nuclear
C*-algebras must essentially use C*-algebras as the
invariants. On the other hand, one could say that, in a way,
Elliott's conjecture turns out to be true: any separable
C*-algebra is classified by the Elliott invariant, though of
a perhaps different algebra. This solves the following
problems \cite[Problem 9.3]{ftt}, \cite[Problem 9.7]{ftt},
\cite[Question 4.1]{efprtt}, \cite[Question 4.2]{efprtt}.

The proof of Theorem \ref{main0} must be geometric. In
\cite[Corollary 5.2]{ftt} Farah, Toms and T\"ornquist showed
that the relation of affine homeomorphism of Choquet
simplices is Borel reducible to the isomorphism relation of
simple, separable, AI algebras. Thus, Theorem \ref{main0}
follows from the following.

\begin{theorem}\label{main}
  The relation of affine homeomorphism of Choquet simplices
  is complete in the class of orbit equivalence relations.
\end{theorem}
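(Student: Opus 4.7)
My plan is to Borel-reduce a known complete orbit equivalence relation to affine homeomorphism of metrizable Choquet simplices. A natural choice for the source is the isometry of Polish metric spaces, established by Gao and Kechris as complete in the class of orbit equivalence relations; this is realized as the natural action of $\Iso(\U)$ on the Effros--Borel space $F(\U)$ of closed subsets of the Urysohn space $\U$. I therefore aim to construct a Borel assignment $M \mapsto K_M$ from $F(\U)$ to (Borel codes for) metrizable Choquet simplices so that $M$ and $N$ are isometric if and only if $K_M$ and $K_N$ are affinely homeomorphic.

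The main creative step is the construction of $K_M$. The simplest candidate $K_M = P(M)$ (the Bauer simplex of probability measures on $M$) is insufficient, since affine homeomorphism between Bauer simplices detects only the homeomorphism type of the extreme boundary and discards all metric information. Consequently $K_M$ must be a non-Bauer Choquet simplex whose face structure carries a rigid record of the metric $d$. The approach I would pursue is to assemble $K_M$ as an inverse limit of finite-dimensional simplices built along finer and finer finite $\varepsilon$-nets $F \subseteq M$, with bonding maps induced by inclusion of nets. At each stage and for each pair $x,y \in F$, one amalgamates into the current simplex a small auxiliary ``witness'' sub-simplex attached at the extreme point corresponding to $x$; the combinatorial type of this witness (for instance, a prescribed pattern of further attached sub-simplices encoding the dyadic expansion of $d(x,y)$) is chosen to depend rigidly on $d(x,y)$. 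Amalgamation along shared extreme points preserves the Choquet property, so the inverse limit $K_M$ is a metrizable Choquet simplex whose extreme boundary contains a distinguished copy of $M$ together with all the auxiliary witness points. Functoriality yields the easy direction $M \cong_{\mathrm{iso}} N \Rightarrow K_M \cong_{\mathrm{aff}} K_N$, and Borel measurability of $M \mapsto K_M$ is routine from the explicit description.

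The hard direction, which I expect to be the main obstacle, is the rigidity statement: any affine homeomorphism $\phi\colon K_M \to K_N$ must induce a bijection $M \to N$ preserving $d$. Two subclaims are required. First, one must characterize, in purely affine terms, the principal extreme points of $K_M$ corresponding to points of $M$, separating them from the auxiliary witness extreme points; this should follow by an inductive argument on the depth of attached witnesses, exploiting a richer local face structure at principal points. Second, one must recover $d(x,y)$ from the affine data attached to the pair, which forces the combinatorial code embedded in each witness to be fully reconstructible from $\phi$. The delicacy is that affine homeomorphisms do not see metric lengths directly; distances have to be encoded through discrete features such as the cardinality, depth, or face-lattice structure of the witness sub-simplices. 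Proving that such an encoding is robust under affine homeomorphisms, while simultaneously ensuring that the inverse limit remains a genuine Choquet simplex with unique barycentric representations, is the technical heart of the argument.
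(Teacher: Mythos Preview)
Your overall strategy matches the paper's: reduce isometry of Polish metric spaces (via Gao--Kechris) to affine homeomorphism of metrizable Choquet simplices by building a simplex whose extreme boundary contains a copy of $M$ together with auxiliary extreme points encoding the metric, and then argue rigidity. You also correctly identify why $P(M)$ fails and that the rigidity direction is the crux. But the execution you sketch diverges from the paper's in two substantive ways, and at least one of your steps is a genuine gap.

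First, the construction. The paper does not form $K_M$ as an inverse limit over $\varepsilon$-nets with attached witness sub-simplices. Instead it builds an \emph{$S$-extension}: it embeds $M$ (in fact $Z(M)\subseteq\U_1$) into the Hilbert cube via a carefully chosen saturated family of Lipschitz-$1$ functions and takes the closed convex hull. This yields a simplex $S_Z(M)$ with $M$ dense in $\ext(S_Z(M))$; the Choquet property is obtained not by an amalgamation lemma but by showing $S(\U_1)$ is a simplex (via a Stone--Weierstrass argument using an enriched Kat\v etov tower) and then observing that $S_Z(M)$ is a face of it. On top of $S_Z(M)$ the paper performs an \emph{iterated cone} (blow-up) construction: for each triple $(x,y,U)$ with $x,y$ in a dense set and $U$ a rational interval containing $d(x,y)$, it adjoins two cone points $c_1,c_2$ projecting to $x,y$, and then adjoins further cone points over the rational convex combinations $\lambda c_1+(1-\lambda)c_2$ with $\lambda\in U$. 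Each cone over a simplex is trivially a simplex, so the Choquet property is preserved at every stage and in the inverse limit. Your claim that ``amalgamation along shared extreme points preserves the Choquet property'' is not justified and is in general false for the kinds of pushouts you seem to have in mind; this is where your construction, as stated, has a gap that the paper's cone machinery sidesteps.

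Second, the rigidity mechanism. The paper separates principal from auxiliary points by a purely topological invariant of $\ext(\Phi(M))$: the auxiliary (cone) points are exactly the \emph{isolated} extreme points, while $\ext(S_Z(M))$ consists of the non-isolated ones (this uses that $M$ is perfect, which the paper arranges in advance). The distance $d(x,y)$ is then the unique $\alpha$ such that for every basic neighborhood $U\ni\alpha$ one can find isolated extreme points near $x,y$ whose $U$-weighted convex combination is a limit of isolated extreme points---an affine, first-order condition. This is quite different from your proposal to encode $d(x,y)$ via the combinatorics (depth, cardinality) of a witness sub-simplex; the paper's encoding uses the \emph{affine position} of auxiliary points on segments, not a discrete lattice invariant, and this is what makes the reconstruction go through under an arbitrary affine homeomorphism.
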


Gao and Kechris and, independently, Clemens
\cite{clemens.gao.kechris} showed that the isometry relation
of separable complete metric spaces is a complete orbit
equivalence relation. Afterwards, using the theory of
Lipschitz free spaces \cite{arens.eells} of Weaver
\cite{weaver} and the results of Mayer-Wolf
\cite{mayer.wolf}, Melleray \cite{melleray.isometry} showed
that the isometry of separable Banach spaces is a complete
orbit equivalence relation. Our proof of Theorem \ref{main}
reduces the relation of isometry of separable complete
metric spaces to the affine homeomorphism of Choquet
simplices.

Haydon \cite{haydon} (see also \cite[Theorem 29.9]{choquet}
and \cite[Page 143]{nato}) showed that any Polish space is
homeomorphic to the extreme bondary of a simplex. One should
note, however, that this construction is by no means unique
and indeed, there are simplices with homeomorphic extreme
boundaries which are not affinely homeomorphic (see
\cite[Page 119]{alfsen.boundary}). The general problem of
determining a simplex from the structure of its extreme
boundary is called the Dirichlet extension problem and it
has a highly nontrivial solution \cite{alfsen.dirichlet}.
In this paper, we take a different approach and, given a
complete metric space, we construct a simplex whose extreme
boundary only contains the metric space as a subset. The
advantage over Haydon's construction is that this
construction is invariant under the isometry. Our simplices,
which we call $S$-extensions of metric spaces, seem to be
different from most of the typical constructions of
simplices appearing in the literature (which are either
formed as inverse limits or as quotients).

Our proof reveals also an interesting connection between
three categories of objects: separable metric spaces,
metrizable Choquet simplices and separable Banach spaces. It
is somewhat parallel to the connection between metric spaces
and Banach spaces given by the Arens--Eells
extensions. Recall that given a separable metric space, its
Arens--Eells extension is a separable Banach space and the
assignment of the Arens--Eells extensions is isometry
invariant. On the other hand, given a simplex, one can look
at the space of affine continuous functions on it. Now, the
$S$-extensions give an invariant assignment of separable
Banach spaces to separable metric spaces that factors
through simplices as follows:
\begin{center}
  \begin{tikzcd}
    \textrm{metric space} \arrow{rr}{S\textrm{-extension}} &
    & \textrm{Choquet simplex}\arrow{rr}{\textrm{affine
        space}} & & \textrm{Banach space}
  \end{tikzcd}
\end{center}
The three classes of separable metric spaces, metrizable
Choquet simplices and separable Banach spaces contain
universal objects that have been studied independently. The
Urysohn space $\U$ \cite{urysohn} is the universal
ultrahomogeneous separable metric space and the Urysohn
sphere $\U_1$ is its counterpart of diameter 1. Both these
spaces are constructed in the same way and have finite
isometry extension properties. For more on the Urysohn space
and sphere, see \cite[Chapter 5]{pestov} or
\cite{melleray1,melleray2,urysohn.workshop}. The
Gurari\u{\i} space \cite{gurarii} is a similar object in the
category of separable Banach spaces: a separable Banach
space with an almost isometric extension property. This
space turns out to be unique \cite{lusky.gurarii} (see also
\cite{solecki.kubis} for a recent proof of this result). The
Poulsen simplex \cite{poulsen} is the unique (see
\cite{lindenstrauss.olsen.sternfeld}) metrizable Choquet
simplex with dense extreme boundary. All these spaces can be
formed via Fra\"iss\'e constructions from finite (or
finite-dimensional) objects. It is known that the
Gurari\u{\i} space is (isomorphic to) the dual of the space
of affine functions on the Poulsen simplex
\cite{lusky.gurarii} (see also \cite{lusky.primary}) but the
relations between the Urysohn space and the Gurari\u{\i}
space or the Poulsen simplex remain unclear. Fonf and
Wojtaszczyk \cite{fonf.wojtaszczyk} showed that the
Gurari\u{\i} space is not isomorphic to the Arens--Eells
extension of the Urysohn space. It seems plausibile,
however, that a modification of the techniques of this paper
can produce an invariant assignment of simplices of metric
spaces so that the Poulsen simplex is associated to the
Urysohn space. Then, the dual affine space would be the
Gurari\u{\i} space.

\bigskip

This paper is organized as follows. In Section
\ref{sec:convex} we recall some basic facts from convex
analysis and Choquet's theory. In Section \ref{sec:isometry}
we recall and slightly strenghten the results of Clemens,
Gao and Kechris on the isometry of separable metric
spaces. Section \ref{sec:twisted} contains some elementary
back-and-forth constructions for building affine
homeomorphism. The $S$-extension construction appears in
Section \ref{sec:sextensions}. Sections \ref{sec:cones} and
\ref{sec:blowup} are a preparation for a coding construction
of metrics that appears in Section \ref{sec:final}. Section
\ref{sec:questions} contains some condluding remarks and
questions.

\subsection{Acknowledgement}

Part of this work was done during the author's stay at the
Fields Institute during the thematic program in the fall
2012. The author is grateful for the hospitality of the
Fields Institute and would like to thank Antonio Avil\'es,
George Elliott, Ilijas Farah and Stevo Todor\v cevi\'c for
many inspiring discussions.

\section{Convex analysis and Choquet's
  theory}\label{sec:convex}

For basic concepts of convex analysis and Choquet's theory
we refer the reader to \cite[Chapter 15]{handbook} and to
\cite{phelps,alfsen,asimow,choquet}. In this paper we use
the terms \textit{simplex} and \textit{Choquet simplex}
interchangably and all convex compact sets that we consider
are metrizable.

We consider the Hilbert cube $[0,1]^\N$ as a compact convex
subset of a locally convex topological vector space (e.g.
$\ell_\infty$ with the weak* topology) and with the standard
metric on $[0,1]^\N$ given by $d_{[0,1]^\N}(x,y)=\sum_n
2^{-n}|x(n)-y(n)|$. Given a set $A$ in a locally convex
vector topological space, we write $\conv(A)$ for the closed
convex hull of $A$. For a compact convex set $C$, we write
$\ext(C)$ for the set of extreme points of $C$. Given two
convex compact sets $C$ and $D$, we write $C\simeq D$ to
denote that $C$ and $D$ are affinely homeomorphic. We write
$\Delta^n$ for the $n$-dimensional simplex. 

Whenever we consider a metric on a topological space, we
assume it is compatible with the topology. Although convex
compact sets are typically considered only with the affine
and topological structure, we will sometimes use metrics on
convex compact sets in locally convex vector topological
spaces. We will always assume that if $C$ is a compact
convex set with a metric $d_C$ on it, then there is a bigger
convex compact set $D$ with $C-C\subseteq D$ and a norm
$||\cdot||$ on $D$ such that $d_C(x,y)=||x-y||$ for every
$x,y\in C$. For convex compact subsets of the Hilbert cube,
we can use the metric $d_{[0,1]^\N}$ and for subsets of $\R$
the standard distance on $\R$. Note that the open balls in
such metrics are convex.

Given an inverse system $(S_n,\pi_n)$ of simplices (with
$\pi_n:S_{n+1}\to S_n$ an affine continuous surjection) and
$x\in\varprojlim S_n$, we write $x\restriction S_i$ (or even
$x\restriction i$ if it does not cause confusion) for the
image of $x$ under the canonical projection map from
$\varprojlim S_n$ to $S_i$. Sometimes, we use the notation
$x\restriction S_i$ for $x\in S_j$ with $j>i$. It is worth
noting here that, in general, it is not true that any
continuous affine surjection is open and the exact
characterization of when this happens has been given by
Vesterstr\o m \cite{vesterstrom}. The inverse limit of a
system of simplices is again a simplex \cite{jellett} and
Lazar and Lindenstrauss \cite{lazar.lindenstrauss} proved
that any (metrizable) simplex is an inverse limit of a
sequence of finite-dimensional simplices. Recently,
L\'opez-Abad and Todor\v cevi\'c \cite{todorcevic.abad}
showed that a generic inverse limit of simplices is affinely
homeomorphic to the Poulsen simplex. For more on the Poulsen
simplex we refer the reader to \cite[Chapter 15]{handbook}
and \cite[Chapter 3 Section 7]{asimow}.

The analysis on convex compact sets is dual to the theory of
order unit Banach spaces via the spaces of affine functions
(see \cite[Chapter 2]{asimow} and \cite{effros}).

The standard Borel structure on the space of simplices was
introduced in \cite{ftt} and is based on the parametrization
of simplices proved by Lazar and Lindenstrauss
\cite{lazar.lindenstrauss} and the duality to the order unit
spaces. Alternately, one can also use the induced Borel
structure from the space of compact subsets of the Hilbert
cube (see \cite[Section 4.1.4]{ftt}) or of the Poulsen
simplex. Note that for a convex compact set $C$, the space
of compact convex subsets of $C$ is closed in $K(C)$. The
space of simplices contained in $C$ is not closed but it is
Borel in $K(C)$ \cite[Lemma 4.7]{ftt}.

We will need a simple observation about affine continuous
functions on convex compact sets. Note that if $(C,d_C)$ is
a compact convex set in a locally convex vector topological
space and $f,g:\Delta^n\rightarrow C$ are two affine
functions such that $d_C(f(e),g(e))\leq\varepsilon$ for some
$\varepsilon>0$ and every $e\in\ext(\Delta^n)$, then
$||f-g||_\infty\leq\varepsilon$.  This follows by a simple
induction on the dimension of the simplex (case $n=1$ is
obvious as the metric is given by a norm, and the induction
step follows from the fact that every point in $\Delta^n$
belongs to a one-dimensional simplex whose one vertex
belongs to $\Delta^{n-1}$ and the other is an extreme point
of $\Delta^n$). Note that since any finite-dimensional
convex compact set is an affine image of $\Delta^n$ for some
$n\in\N$, the above observation is true for any
finite-dimensional convex compact set in place of
$\Delta^n$. Now, since the convex span of extreme points is
dense in any convex compact set, the following is true as
well.

\begin{proposition}\label{affproximity}
  Let $C,D$ be convex compact sets in locally convex vector
  topological spaces and let $d_C$ be a metric on $C$. If
  $f,g:D\rightarrow C$ are two affine continuous functions
  such that $d_C(f(e),g(e))\leq\varepsilon$ for some
  $\varepsilon>0$ and every $e\in\ext(D)$, then
  $||f-g||_\infty\leq\varepsilon$
\end{proposition}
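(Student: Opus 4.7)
The plan is to follow the inductive sketch in the paragraph preceding the statement and then upgrade from polytopes to arbitrary $D$ via the Krein--Milman theorem. The standing assumption on $d_C$ provides a norm $\|\cdot\|$ on an ambient locally convex space with $d_C(u,v)=\|u-v\|$ for $u,v\in C$; since $f,g$ take values in $C$, every difference $f(x)-g(x)$ lies in $C-C$ and its norm is well defined, which is precisely what allows affine bounds to propagate through convex combinations via the triangle inequality.

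First I would prove the statement for $D=\Delta^n$ by induction on $n$. The base case $n=1$ is immediate: any $x\in\Delta^1$ has the form $te_0+(1-t)e_1$ for the two vertices $e_0,e_1$, so by affinity
\[
f(x)-g(x)=t\bigl(f(e_0)-g(e_0)\bigr)+(1-t)\bigl(f(e_1)-g(e_1)\bigr),
\]
and the triangle inequality for $\|\cdot\|$ gives $\|f(x)-g(x)\|\leq\varepsilon$. For the inductive step, fix a vertex $v$ of $\Delta^n$; every $x\in\Delta^n$ lies on a segment $[v,y]$ with $y$ in the opposite face $\Delta^{n-1}$, whose extreme points are extreme in $\Delta^n$. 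The induction hypothesis applied to $f\restriction\Delta^{n-1},g\restriction\Delta^{n-1}$ yields $\|f(y)-g(y)\|\leq\varepsilon$, and the $n=1$ case applied to the segment $[v,y]$ then gives $\|f(x)-g(x)\|\leq\varepsilon$.

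Next I would extend to any polytope $K=\conv F$ with $F\subseteq\ext(D)$ finite: picking an affine continuous surjection $\pi\colon\Delta^k\to K$ sending vertices onto $F$, each vertex $e'$ of $\Delta^k$ has $\pi(e')\in F\subseteq\ext(D)$ and therefore satisfies the hypothesis, so applying the simplex case to $f\circ\pi,g\circ\pi$ yields $\|f\circ\pi-g\circ\pi\|_\infty\leq\varepsilon$ and hence $\|f-g\|_\infty\leq\varepsilon$ on $K$. For general compact convex $D$, Krein--Milman says every $x\in D$ is a limit of convex combinations $x_k$ of finitely many extreme points of $D$, each $x_k$ lying in a polytope of the above form; continuity of $f$ and $g$ passes the uniform bound $\|f(x_k)-g(x_k)\|\leq\varepsilon$ to the limit. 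There is no substantive obstacle in this argument; the only point requiring care is that $d_C$ be the restriction of a norm on an ambient space, which is exactly the standing assumption built into the preceding paragraph and ensures that the triangle inequality applies to the affine difference $f(x)-g(x)$.
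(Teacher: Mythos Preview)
Your proof is correct and follows essentially the same route as the paper: induction on the dimension of $\Delta^n$, passage to polytopes via an affine surjection from a simplex, and then Krein--Milman plus continuity to handle arbitrary $D$. The paper compresses the last two steps into a single sentence, but the content is identical.
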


\section{Isometry of perfect Polish spaces of bounded
  diameter}\label{sec:isometry}

The standard Borel space of separable complete metric spaces
is identified with the space of closed subsets of the
Urysohn space $\U$ with its Effros Borel structure. Gao and
Kechris \cite{gao.kechris}, and independently Clemens
\cite{clemens,clemens.gao.kechris}, proved that the isometry
relation on the space of separable complete metric spaces is
complete in the class of orbit equivalence relations.

Similarly, we consider the standard Borel space of separable
complete metric spaces of diameter bounded by $1$ as the
space of closed subsets of the Urysohn sphere $\U_1$.

For a Polish space $X$, write $\Perf(X)$ for the space of
perfect (nonempty) subsets of $X$. Note if $X$ is compact,
then $\Perf(X)$ is $G_\delta$ in $K(X)$, which implies that
for every Polish space $X$ the set $\Perf(X)$ is Borel in
$F(X)$. Given this, the standard Borel space of perfect
separable metric spaces of diameter bounded by $1$ is
identified with $\Perf(\U_1)$.

The following result is essentially due to Gao, Kechris
\cite{gao.kechris} and Clemens
\cite{clemens,clemens.gao.kechris} and we only indicate the
necessary changes in their proof to obtain the stronger
statement.

\begin{proposition}\label{isomperfect}
  The isometry relation of perfect subspaces of $\U_1$ is
  complete in the class of orbit equivalence relations.
\end{proposition}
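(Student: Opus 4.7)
The strategy is to modify the Gao--Kechris--Clemens reduction so that its target lands in perfect Polish spaces of diameter at most $1$. Their original reduction produces, from a universal orbit equivalence relation, a Borel assignment $x \mapsto M_x$ of separable complete metric spaces such that the source relation reduces to isometry of the $M_x$. To obtain the strengthening, I would compose this reduction with a Borel isometry-invariant operation that makes the target spaces simultaneously bounded, complete and perfect.

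For the diameter bound and a uniform lower gap between distinct points, I would inspect the Gao--Kechris--Clemens construction (which naturally produces graph/ultrametric spaces with distances in a prescribed discrete set) and, if necessary, apply the isometry-equivariant transformation sending $d(x,y)$ to $\frac{1}{4} + \frac{1}{4}\cdot\frac{d(x,y)}{1+d(x,y)}$ for distinct points and to $0$ when $x=y$. The triangle inequality for distinct points is trivial since all positive values lie in $[1/4,1/2]$, and the transformation is strictly monotone in $d$, so isometry is preserved in both directions. After this step, each $M_x$ is a complete metric space whose nonzero distances lie in $[1/4, 1/2]$.

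For perfectness, fix a perfect compact metric space $C$ of diameter $1/8$ (for instance, the standard Cantor set rescaled into $[0,1/8]$), and form the product $M_x^\ast = M_x \times C$ equipped with the maximum of the two coordinate metrics. Then $M_x^\ast$ is separable, complete, perfect (since $C$ is), and of diameter at most $1/2$, hence isometrically embeddable into $\U_1$ as an element of $\Perf(\U_1)$. The map $x \mapsto M_x^\ast$ is Borel, and $M_x \cong M_y$ trivially implies $M_x^\ast \cong M_y^\ast$.

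The main obstacle is the reverse implication: any isometry $M_x^\ast \to M_y^\ast$ must descend to an isometry $M_x \to M_y$. This is exactly where the careful separation of scales is used. The Cantor fibres $\{m\}\times C$ are characterised purely metrically as the maximal subsets of $M_x^\ast$ of diameter at most $1/8$: each fibre has diameter exactly $1/8$, while any subset of diameter $\leq 1/8$ is contained in a single fibre since distinct fibres are at distance at least $1/4$. Hence every isometry $M_x^\ast \to M_y^\ast$ permutes fibres and induces an isometry of the fibre quotients, which recovers the shifted metric on $M_x$ and $M_y$; inverting the strictly monotone shift yields the desired isometry between the original spaces, completing the reduction.
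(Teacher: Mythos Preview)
There is a genuine gap in your second step. Sending $d(x,y)$ to $\tfrac{1}{4}+\tfrac{1}{4}\cdot\tfrac{d(x,y)}{1+d(x,y)}$ for $x\neq y$ pushes all nonzero distances into $[1/4,1/2]$, so the transformed space carries the discrete topology. But the Gao--Kechris spaces are genuinely uncountable (the underlying set contains the completion of $\U^\infty$), so after your transformation they become uncountable discrete, hence non-separable, and the reduction no longer lands in $\Perf(\U_1)$. Your parenthetical that the construction ``naturally produces graph/ultrametric spaces with distances in a prescribed discrete set'' is incorrect and cannot be salvaged: a separable complete space whose nonzero distances are bounded away from $0$ is countable, and isometry of countable metric spaces is merely an $S_\infty$-orbit relation, strictly below any complete orbit equivalence relation. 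Worse, the defect is intrinsic to your scheme: your fibre-recognition step requires the fibres $\{m\}\times C$ to sit at uniformly positive distance from one another, which makes them clopen, and an uncountable clopen partition again destroys separability. The very scale separation that drives your recovery argument is incompatible with an uncountable base.

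The paper sidesteps this by opening up the Gao--Kechris reduction rather than post-processing its output. The base space $X_1$ there is already perfect, and the encoding attaches only \emph{countably many} auxiliary points (one per coordinate of $F(X_1)^\N$); the paper replaces each of those single points by a copy of the Cantor set. The resulting space $X=X_1\cup\bigcup_n C_n$ is perfect, separability survives because only countably many clopen pieces are adjoined, and the metric on the uncountable part $X_1$ is left untouched. Recovery of $X_1$ inside $(X,d_s)$ then uses a global metric criterion (which integer distances are realized from a given point) rather than any local diameter bound, so no scale separation on the uncountable part is ever required.
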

\begin{proof}
  First note that the isometry relation of perfect separable
  complete metric spaces is reducible to the isometry
  relation of perfect separable complete metric spaces of
  diameter bounded by $1$. This follows from the fact that
  two spaces $(X,d_X)$ and $(Y,d_Y)$ are isometric if and
  only if the spaces $(X,d_X\slash(1+d_X))$ and
  $(Y,d_Y\slash(1+d_Y))$ are isometric. Thus, it is enough
  to show that the isometry of perfect separable metric
  spaces is complete in the class of orbit equivalence
  relations.
 
  For a separable complete metric space $(X,d_X)$, define
  (see \cite[Section 2F]{gao.kechris}) the equivalence
  relation $E(X)$ on $F(X)^\N$ as the equivalence induced by
  the group $\iso(X)$ of isometries of $(X,d_X)$ acting on
  $F(X)^\N$ coordinatewise, i.e. for
  $(F_1,F_2,\ldots),(F_1',F_2',\ldots)\in F(X)$ we have
  $(F_1,F_2,\ldots)\mathrel{E(X)}(F_1',F_2',\ldots)$ if
  there is $g\in\iso(X)$ such that $g(F_i)=F_i'$ for each
  $i\in\N$. Gao and Kechris \cite[Section 2H]{gao.kechris}
  (cf. also \cite[Theorem 3.5.3]{becker.kechris}) show that
  every orbit equivalence relation is Borel reducible to
  $E(X_1)$ where $X_1$ is the completion of $\U^\infty$ and
  $\U^\infty$ is the union of the sequence
  $\U\subseteq\U^3\subseteq\U^6\ldots$ with the inclusion
  maps given by $\U^n\ni x\mapsto(x,x,x)\in\U^{3n}$. The
  Urysohn space $\U$ is perfect and hence so is $\U^\infty$
  and $X_1$.

  Put $X=X_1\cup\bigcup_n C_n$, where each $C_n$ is a copy
  of the Cantor space and the union of $C_n$'s is discrete
  (i.e. each $C_n$ is a clopen set in $X$). Now $X$ is a
  perfect Polish space. 

  We can actually assume that the diameter of $X_1$ is
  bounded by $1$ (by replacing $d_{X_1}$ with
  $d_{X_1}\slash(1+d_{X_1})$). Given a sequence
  $s=(F_1,F_2,\ldots)\in F(X_1)^\N$ define the metric $d_s$
  on $X$ in following way: $d_s\restriction X_1$ is the
  original metric $d_{X_1}$ on $X_1$ and $d_s\restriction
  C_n$ is the standard metric on the Cantor space (with
  diameter $1$), for $x\in C_n$ and $y\in C_m$ let
  $d_s(x,y)=|n-m|+1$ and for $y\in C_n$ and $x\in X$ we have
  $d_s(x,y)=(n+2)+d_{X_1}(x,F_n)$.

  It is routine \cite[Section 2G]{gao.kechris} to check that
  the map $s\mapsto (X,d_s)$ is Borel. Now, the map
  $s\mapsto d_s$ is a reduction of $E(X_1)$ to the isometry
  of perfect separable complete metric spaces. If $s,t\in
  F(X)^\N$ are $E(X_1)$-equivalent, then clearly $X_s$ and
  $X_t$ are isometric. On the other hand, suppose
  $\varphi:X_s\rightarrow X_t$ is an isometry. Then it must
  be the case that $\phi''X_1=X_1$ since the points in $X_1$
  are the only points $y\in X_s$ such that the set
  $\{d_s(x,y):x\in X_0,d_s(x,y)\in\N\}$ consists of all
  positive natural numbers, and the same is true for $y\in
  X_t$. Then, it easily follows that $\phi\restriction X_1$
  is an isometry of $X_1$ and this isometry witnesses the
  $E(X_1)$-equivalence of $s$ and $t$ (for details see
  \cite[Lemma 2.5]{gao.kechris}).
\end{proof}

The above proposition can be also proved by adapting the
proof of Clemens \cite{clemens,clemens.gao.kechris}. Gao and
Kechris \cite[Section 2D]{gao.kechris} show also that the
isometry relation of subspaces of $\U$ is bireducible with
the equivalence relation induced by the action of $\Iso(\U)$
on $F(\U)$. Gao and Kechris deduce it from the fact that for
every separable complete metric space $X$ there is an
isometric copy $Z(X)$ of $X$ in $\U$ such that $F(\U)\ni
X\mapsto Z(X)\in F(\U)$ is Borel and for every $X,Y\in
F(\U)$ if $X$ and $Y$ are isometric, then there is an
isometry $\varphi\in\iso(\U)$ such that $\varphi''
Z(X)=Z(Y)$. This follows from the fact (see Gromov
\cite[Page 79]{gromov} and \cite[Lemma 2.2]{gao.kechris})
that for every separable metric space there is a metric
extension $X^*$ of $X$ (obtained via the Kat\v etov
construction) that is canonically isometric to the Urysohn
space $\U$ and such that any isometry $\varphi: X\to Y$
extends to an isometry $\varphi^*:X^*\to Y^*$. For more
details on this construction see \cite[Page
325]{gao}. Exactly the same arguments apply to metric spaces
of diameter bounded by $1$ and the Urysohn sphere $\U_1$,
which gives the following.

\begin{proposition}\label{zsets}
  There is a Borel map $Z:\Perf(\U_1)\to\Perf(\U_1)$ such
  that $X$ is isometric to $Z(X)$ and if $X,Y\in\Perf(\U_1)$
  are isometric, then there is an isometry
  $\varphi\in\iso(\U_1)$ with $\varphi''Z(X)=Z(Y)$.
\end{proposition}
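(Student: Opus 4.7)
The plan is to imitate the Gao--Kechris construction verbatim, replacing the unbounded Kat\v etov extension by its bounded counterpart adapted to the Urysohn sphere $\U_1$. Recall that for an arbitrary separable metric space $(X,d_X)$ one builds, by iterating the Kat\v etov functor and taking a completion, an extension $X^*$ of $X$ which is canonically isometric to $\U$; the point of canonicity is that the construction is functorial on isometric embeddings, so any isometry $\varphi:X\to Y$ lifts to an isometry $\varphi^*:X^*\to Y^*$. I would carry this out in the bounded version: the Kat\v etov functor restricted to $1$-Lipschitz functions bounded by $1$ (i.e.\ Kat\v etov functions of norm $\leq 1$) preserves the class of separable complete metric spaces of diameter $\leq 1$, and iterating it and completing yields an extension $X^\bullet$ of diameter $\leq 1$ that is finitely ultrahomogeneous with respect to embeddings of finite metric spaces of diameter $\leq 1$, hence (by the Uryshon--Huhunai\v svili characterization of $\U_1$) canonically isometric to $\U_1$.

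Next, I would fix once and for all a Borel map $X\mapsto \psi_X$ where $\psi_X:X^\bullet\to\U_1$ is an isometry chosen by any standard Borel selector (for instance, by enumerating a dense sequence in $X$ and running the Kat\v etov iteration in a Borel way, then comparing to a fixed dense sequence in $\U_1$ via a back-and-forth). The map $Z(X):=\psi_X(X)\subseteq\U_1$ is then clearly isometric to $X$ by construction, and since $X\in\Perf(\U_1)$ is perfect so is $Z(X)$. Borel measurability of $X\mapsto Z(X)$ as a map into $\Perf(\U_1)$ reduces to checking that the iterated Kat\v etov extension and the comparison isometry can be computed in a Borel way from a code of $X$; this is exactly the argument given in \cite[Section 2D]{gao.kechris} (see also \cite[Page 325]{gao}), requiring essentially no change once the bounded Kat\v etov functor is in place.

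For the second clause, suppose $X,Y\in\Perf(\U_1)$ and $\varphi:X\to Y$ is an isometry. By functoriality of the bounded Kat\v etov construction, $\varphi$ lifts through each stage of the iteration to an isometry $\varphi^\bullet:X^\bullet\to Y^\bullet$ extending $\varphi$. Conjugating by the comparison isometries, the composition $\psi_Y\circ\varphi^\bullet\circ\psi_X^{-1}$ is a self-isometry of $\U_1$ sending $Z(X)=\psi_X(X)$ onto $Z(Y)=\psi_Y(Y)$, which is the desired element of $\iso(\U_1)$.

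The only genuine obstacle is verifying that the Kat\v etov construction behaves well under the diameter--$1$ constraint: one must check that truncating to Kat\v etov functions of norm $\leq 1$ still gives a functor with enough homogeneity to realize $\U_1$, and that the truncation does not destroy Borel measurability of the coding. Both are essentially folklore (they appear implicitly in discussions of Fra\"iss\'e limits of finite metric spaces of diameter $\leq 1$, cf.\ \cite{melleray1}), but this is where the proof actually differs from the Gao--Kechris version and deserves the only serious attention.
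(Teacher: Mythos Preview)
Your proposal is correct and follows exactly the approach the paper indicates: adapt the Gao--Kechris iterated Kat\v etov construction to the bounded case (Kat\v etov functions with values in $[0,1]$) so that the canonical extension $X^\bullet$ is isometric to $\U_1$ rather than $\U$, and then use functoriality to lift isometries. The paper does not give a separate proof environment for this proposition; it simply asserts in the preceding paragraph that ``exactly the same arguments apply to metric spaces of diameter bounded by $1$ and the Urysohn sphere $\U_1$,'' citing \cite[Section 2D]{gao.kechris}, \cite[Lemma 2.2]{gao.kechris}, and \cite[Page 325]{gao} --- which is precisely what you have spelled out.
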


\section{Twisted homeomorphisms and approximate
  intertwinings}\label{sec:twisted}

Say that an inverse system $(S_n,\pi_n:n\in\N)$ of simplices
is \textit{increasing} if $S_n\subseteq S_{n+1}$ is a face
of $S_{n+1}$ for each $n\in\N$ and $\pi_{n+1}(s)=s$ for each
$s\in S_{n+1}$. Note that if $(S_n:n\in\N)$ is an increasing
system of simplices, then also $S_i\subseteq\varprojlim S_n$
is a face of $\varprojlim S_n$, for each $i\in\N$.

An \textit{approximate intertwining} between two increasing
inverse systems of simplices $(S_i:i\in\N)$ and
$(T_i:i\in\N)$ is a sequence of affine continuous injective
maps $\varphi_i:S_{n_i}\rightarrow T_{m_i}$ and
$\psi_i:T_{m_i}\rightarrow S_{n_{i+1}}$ (for some increasing
sequences $n_i$ and $m_i$ of natural numbers) such that
$\varphi_n\subseteq\psi_n{}^{-1}$,
$\psi_n\subseteq\varphi_{n+1}{}^{-1}$ (recall that the
systems are increasing) and for each $\varepsilon>0$ there
exists $i$ such that for each $j>i$ and for each $x\in
S_{m_j}$ and $y\in T_{m_j}$ we have
\begin{itemize}
\item $|d_{T_{m_i}}(\varphi_i(x\restriction
  S_{n_i}),\varphi_j(x)\restriction T_{m_i})|<\varepsilon$,
\item $|d_{S_{n_{i+1}}}(\psi_i(y\restriction
  T_{m_i}),\psi_j(y)\restriction S_{m_{i+1}})|<\varepsilon.$
\end{itemize}

\begin{proposition}\label{approxinter}
  Suppose $(S_i:i\in\N)$ and $(T_i:i\in\N)$ are two
  increasing inverse systems of simplices and there is an
  approximate intertwining between these systems. Then
  $\varprojlim S_i$ and $\varprojlim T_i$ are affinely
  homeomorphic via a map which extends all the maps in the
  approximate intertwining.
\end{proposition}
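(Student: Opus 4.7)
The plan is to run Elliott's classical approximate-intertwining argument in the category of metrizable Choquet simplices. Given $x\in\varprojlim S_n$, I would define $\Phi(x)\in\varprojlim T_n$ by prescribing, for each $i$, its restriction to $T_{m_i}$ as the limit
\[
\Phi(x)\restriction T_{m_i} \;=\; \lim_{j\to\infty} \varphi_j(x\restriction S_{n_j})\restriction T_{m_i}.
\]
The approximate-intertwining inequality shows that the sequence on the right is Cauchy in $T_{m_i}$: for any $\varepsilon>0$, choosing $i$ so that the $\varepsilon$-bound is in force and taking $j,j'>i$ puts both $\varphi_j(x\restriction S_{n_j})\restriction T_{m_i}$ and $\varphi_{j'}(x\restriction S_{n_{j'}})\restriction T_{m_i}$ within $\varepsilon$ of the common reference point $\varphi_i(x\restriction S_{n_i})$, hence within $2\varepsilon$ of each other. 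Since $d_{T_{m_i}}$ is induced by a norm on a larger compact convex set, the Cauchy sequence converges, and coherence across $i$ follows because the affine continuous projections in the inverse system commute with the limit. The remaining levels $k$ not of the form $m_i$ are then determined by the universal property of the inverse limit.

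Next, I would check that $\Phi$ is affine and continuous: affineness is inherited from each $\varphi_j$ and preserved by norm limits, while continuity follows from a routine $3\varepsilon$-argument combining continuity of each individual $\varphi_j$ with the uniform Cauchy estimate in $x$. Symmetrically, one defines $\Psi:\varprojlim T_n\to\varprojlim S_n$ by
\[
\Psi(y)\restriction S_{n_{i+1}} \;=\; \lim_{j\to\infty}\psi_j(y\restriction T_{m_j})\restriction S_{n_{i+1}}.
\]
The crux is then to prove $\Psi\circ\Phi=\mathrm{id}$ and $\Phi\circ\Psi=\mathrm{id}$. Here one invokes the retraction identities $\psi_n\circ\varphi_n=\mathrm{id}_{S_{n_n}}$ and $\varphi_{n+1}\circ\psi_n=\mathrm{id}_{T_{m_n}}$ encoded by the conditions $\varphi_n\subseteq\psi_n^{-1}$ and $\psi_n\subseteq\varphi_{n+1}^{-1}$: expanding $\Psi(\Phi(x))\restriction S_{n_{i+1}}$ as a double limit, consecutive applications of $\psi_j$ and $\varphi_j$ collapse to the identity modulo the approximate error, forcing the value to equal $x\restriction S_{n_{i+1}}$.

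The extension property is then immediate. For $x\in S_{n_i}$, viewed inside $\varprojlim S_n$ as a face, each retraction in the inverse system fixes $x$, so $x\restriction S_{n_j}=x$ for all $j\geq i$; the approximate-intertwining bound then forces $\Phi(x)\restriction T_{m_i}=\varphi_i(x)$, and hence $\Phi(x)=\varphi_i(x)$ in $\varprojlim T_n$. The main technical obstacle I foresee is the careful bookkeeping of the interleaved indices $n_i,m_i$ in verifying $\Psi\circ\Phi=\mathrm{id}$ — one has to combine the two Cauchy limits with the retraction identities at matching levels — but conceptually this is simply Elliott's approximate-intertwining argument transported from AF C*-algebras to the category of Choquet simplices, so no new ideas beyond attention to bookkeeping should be needed.
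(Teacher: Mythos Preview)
Your proposal is correct and follows essentially the same route as the paper: define the limit map coordinatewise via the Cauchy sequences $\varphi_j(x\restriction S_{n_j})\restriction T_{m_i}$, check affineness and continuity, construct $\Psi$ symmetrically, and verify the extension property from the retraction identities $\varphi_{j+1}\restriction S_{n_j}=\varphi_j$ (which follow from $\varphi_n\subseteq\psi_n^{-1}\subseteq\varphi_{n+1}$).

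The one place where you diverge from the paper is the order in which you establish bijectivity and the extension property. You propose to show $\Psi\circ\Phi=\mathrm{id}$ first, by unwinding a double limit and collapsing it with the retraction identities, and only then read off the extension property. The paper does the opposite: it proves the extension property $\Phi\restriction S_{n_i}=\varphi_i$ first (exactly as you sketch at the end), and then gets $\Psi\circ\Phi=\mathrm{id}$ for free by density --- on the dense set $\bigcup_i S_{n_i}$ one has $\Psi\circ\Phi=\psi_i\circ\varphi_i=\mathrm{id}$, and continuity does the rest. This avoids the double-limit bookkeeping you flag as the main technical obstacle. So your plan works, but you can spare yourself that bookkeeping simply by swapping the order of the last two steps.
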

\begin{proof}
  Write $S=\varprojlim S_i$ and $T=\varprojlim T_i$. Let
  $(\varphi_n:n<\omega)$ and $(\psi_n:n<\omega)$ form an
  approximate intertwining and for simplicity assume that
  $n_i=m_i=i$.  Note that for each $x\in S$ and for each
  $n\in\N$ the sequence of points $(\varphi_k(x\restriction
  S_k)\restriction T_n:k\in\N)$ in $T_n$ is Cauchy, by the
  property of approximate intertwining. For $x\in S$ write
  $x_n=\lim_k \varphi_k(x\restriction S_k)\restriction
  T_n$. Note that given $n<m\in\N$ we have $x_m\restriction
  T_n=x_n$ since $(\lim_k \varphi_k(x\restriction
  S_k)\restriction T_m)\restriction T_n=\lim_k
  (\varphi_k(x\restriction S_k)\restriction T_m)\restriction
  T_n)=\lim_k \varphi_k(x\restriction S_k)\restriction
  T_n$. Thus, there exists a point $y\in T$ such that for
  each $y\restriction T_n=x_n$ for each $n\in\N$. Write
  $\varphi(x)=y$. Note that the map $\varphi$ is affine
  since $(\restriction T_n)\circ\varphi_n$ is affine for
  each $n\in\N$. Note also that for each $n\in\N$ we have
  $\varphi\restriction S_n=\varphi_n$ since if $x\in S_n$,
  then $\varphi_k(x\restriction S_k)=\varphi_n(x)$ for each
  $k>n$.
  \begin{claim}
    The map $\varphi$ is continuous.
  \end{claim}
  \begin{proof}
    Fix $\varepsilon>0$. We need $\delta>0$ such that if
    $x_1,x_1\in S$ are such that $d_S(x_1,x_2)<\delta$, then
    $d_T(\varphi(x_1),\varphi(x_2))<\varepsilon$.  Fix
    $n\in\N$ big enough so that $T_n$ is $\varepsilon\slash
    2$-dense in $T$ and $\varphi(x)$ is $\varepsilon\slash
    2$-close to $\varphi_n(x\restriction S_n)$. Let
    $\delta>0$ be such that if $d_{S_n}(x_1,x_2)<\delta$,
    then
    $d_{T_n}(\varphi_n(x_1),\varphi_n(x_2))<\varepsilon\slash
    2$. Then, clearly, $\delta$ is as needed.
  \end{proof}
  Analogously define the map $\psi:T\rightarrow S$ and argue
  that it is affine and continuous. Now, the facts that
  $\bigcup_n S_n$ is dense in $S$ and $\bigcup_n T_n$ is
  dense in $T$, $\varphi$ extends $\varphi_n$ and $\psi$
  extends $\psi_n$ for each $n\in\N$, imply that
  $\varphi^{-1}=\psi$, as $\varphi_n\subseteq\psi_n{}^{-1}$
  and $\psi_n\subseteq\varphi_{n+1}{}^{-1}$.
\end{proof}

Given a sequence $C_n$ of compact convex sets of a compact
convex set $C$ in a locally convex topological vector space,
say that $C_n$ is \textit{convergent} if it convergent in
$K(C)$ (i.e. in the Hausdorff metric) and write $\lim_n C_n$
for the limit. Note that any increasing sequence of compact
convex sets is convergent and then $\lim_n C_n$ is equal to
the closure of its union. Suppose that $C_n\subseteq X$ and
$D_n\subseteq X$ are two increasing sequences of compact
convex sets. Say that a sequence of affine homeomorphic
embeddings $\varphi_n:C_n\rightarrow C$ is a \textit{twisted
  approximation} if for each $\varepsilon>0$ there exists
$n_0$ such that for every $k,l>n_0$ we have
\begin{itemize}
\item[(i)] $d_C(\varphi_k(x),\varphi_l(x))<\varepsilon$ for
  every $x\in C_k$,
\item[(ii)] $d_C(\rng\varphi_k,\lim_n D_n)<\varepsilon$,
\item[(iii)] $\varphi_k$ is an $\varepsilon$-isometry.
\end{itemize}
In (ii) above $d_C$ stands for the Hausdorff metric
extending $d_C$.

\begin{proposition}\label{twisted}
  Suppose $(C_n:n\in\N)$ and $(D_n:i\in\N)$ are two
  increasing sequences of convex compact subets of a convex
  compact set $C$ in a locally convex topological vector
  space. If there is a twisted approximation from
  $(C_n:n\in\N)$ to $(D_n:n\in\N)$, then $\lim_n C_n$ and
  $\lim_n D_n$ are affinely homeomorphic.
\end{proposition}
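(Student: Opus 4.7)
The plan is to construct the desired affine homeomorphism $\varphi:\lim_n C_n\to\lim_n D_n$ as the uniform limit of the maps $\varphi_n$ and then verify each required property in turn: continuity, affineness, range identification, and injectivity. Given $x\in\bigcup_n C_n$, say $x\in C_{n_1}$, condition (i) implies that $(\varphi_k(x))_{k\geq n_1}$ is Cauchy in $C$; since $C$ is compact this limit exists, and I set $\varphi(x):=\lim_k\varphi_k(x)$. The Cauchy estimate in (i) is uniform in $x$, so the convergence is uniform on $\bigcup_n C_n$; hence $\varphi$ is uniformly continuous there and extends uniquely to a continuous map on $\overline{\bigcup_n C_n}=\lim_n C_n$. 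Each $\varphi_k$ is affine on $C_k$, and since pointwise limits of affine maps are affine, $\varphi$ is affine on $\bigcup_n C_n$ and hence on its closure.

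Next, I would identify $\rng\varphi$ with $\lim_n D_n$. The uniform convergence $\varphi_k\to\varphi$ on $C_k$ implies that the Hausdorff distance between $\varphi_k(C_k)$ and $\varphi(C_k)$ tends to $0$. Since $C_k$ converges to $\lim_n C_n$ in Hausdorff distance (being an increasing sequence of compact sets) and $\varphi$ is uniformly continuous, $\varphi(C_k)$ converges to $\varphi(\lim_n C_n)=\rng\varphi$. Combined with (ii), which says $\rng\varphi_k$ approaches $\lim_n D_n$ in Hausdorff distance, this forces $\rng\varphi=\lim_n D_n$.

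Finally, condition (iii) yields injectivity: for every $\varepsilon>0$ there is $n_0$ so that for every $k>n_0$ and all $x,y\in C_k$, $|d_C(\varphi_k(x),\varphi_k(y))-d_C(x,y)|<\varepsilon$. Passing to the limit in $k$ and then letting $\varepsilon\to 0$ shows that $\varphi$ is an exact isometry on $\bigcup_n C_n$, hence on $\lim_n C_n$ by continuity, and in particular injective. A continuous affine bijection from a compact convex set onto a Hausdorff target is automatically a homeomorphism, finishing the proof. The only real subtlety is the range identification: although (ii) says each individual $\rng\varphi_k$ is close to $\lim_n D_n$, one must link this to $\rng\varphi$ by combining uniform convergence of the maps with Hausdorff-convergence of the domains $C_k$ to $\lim_n C_n$ and the uniform continuity of $\varphi$; the rest of the argument is essentially a standard Arzel\`a--Ascoli style passage to the limit.
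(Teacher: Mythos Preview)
Your proposal is correct and follows essentially the same route as the paper: define $\varphi$ on $\bigcup_n C_n$ as the limit of the $\varphi_n$, use (iii) to see it is an isometry (hence extends to $\lim_n C_n$), use (ii) to identify the range, and conclude by compactness. The only minor difference is that you invoke (i) to obtain uniform convergence and continuity before separately using (iii) for the isometry, whereas the paper goes straight to (iii) for both; your range-identification argument via Hausdorff convergence is also spelled out more fully than the paper's one-line appeal to (ii), but the content is the same.
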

\begin{proof}
  The proof is analogous to the proof of Proposition
  \ref{approxinter}. Note that by (i), for every
  $x\in\bigcup_n C_n$ the sequences $\varphi_n(x)$ is
  convergent. Thus, we can define $\varphi':\bigcup_n
  C_n\rightarrow C$ as $\varphi(x)=\lim_n\varphi_n(x)$. Note
  that (iii) implies that $\varphi'$ is continuous (even an
  isometry). Moreover, (iii) implies that given any
  $x\in\lim_n C_n$ and any sequence $x_k\in\bigcup C_k$
  convergent to $x$, the sequence $\varphi'(x_k)$ is
  convergent and the limit does not depend on the choice of
  the sequence $x_k$. Thus, $\varphi'$ extends uniquely to
  $\varphi:\lim_n C_n\to C$, which again is continuous. The
  condition (ii) implies that $\rng(\varphi)=\lim_n D_n$. By
  (iii) we also get that $\varphi$ is one-to-one and hence a
  homeomorphism. Finally, $\varphi'$ is affine as a limit of
  affine maps and hence so is $\varphi$.
\end{proof}

Typically, convex compact sets are considered up to affine
homeomorphism and that is why we stated Proposition
\ref{twisted} in the above form.  However, we will apply
twisted approximations to get actual equality of convex
compact sets. Say that a sequence of affine homeomorphic
embeddings $\varphi_n:C_n\rightarrow C$ is a \textit{strong
  twisted approximation} if it is a twisted approximation
and for each $\varepsilon>0$ there exists $n_0$ such that
for every $k>n_0$ we have
\begin{itemize}
\item[(i')] $d_C(\varphi_k(x),x)<\varepsilon$ for every
  $x\in C_k$
\end{itemize}

Constructing a strong twisted approximation thus boils down
to showing that the Hausdorff distance between $\lim_n C_n$
and $\lim_n D_n$ is zero.

\begin{proposition}\label{stwisted}
  Suppose $(C_n:n\in\N)$ and $(D_n:i\in\N)$ are two
  convergent sequences of convex compact subets of a convex
  compact set $C$ in a locally convex vector space. If there
  is a strong twisted approximation from $(C_n:n\in\N)$ to
  $(D_n:n\in\N)$, then $\lim_n C_n$ and $\lim_n D_n$ are
  equal.
\end{proposition}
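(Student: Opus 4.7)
The plan is to prove the conclusion directly by showing that the Hausdorff distance between $\lim_n C_n$ and $\lim_n D_n$ (computed in the ambient metric $d_C$) is zero. Since compact subsets of $C$ are determined by their Hausdorff distance, this will immediately give $\lim_n C_n = \lim_n D_n$. So rather than routing the argument through Proposition \ref{twisted} and an induced affine homeomorphism, I would use the twisted approximation only as a device for producing sets $\rng\varphi_k$ that simultaneously approximate both limits.

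Concretely, fix $\varepsilon>0$ and choose $n_0$ large enough so that for every $k>n_0$ the conditions (i'), (ii), (iii) of the strong twisted approximation hold with parameter $\varepsilon$, and moreover (by convergence of $(C_n)$ and $(D_n)$ in $K(C)$) so that $d_C(C_k,\lim_n C_n)<\varepsilon$ and $d_C(D_k,\lim_n D_n)<\varepsilon$ in the Hausdorff metric. Fix any such $k$. First, condition (i') tells us that $\rng\varphi_k$ has Hausdorff distance at most $\varepsilon$ from $C_k$: for every $x\in C_k$ the point $\varphi_k(x)\in\rng\varphi_k$ lies within $\varepsilon$ of $x$, and conversely every $y=\varphi_k(x)\in\rng\varphi_k$ is within $\varepsilon$ of the witness $x\in C_k$. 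Second, condition (ii) tells us that $d_C(\rng\varphi_k,\lim_n D_n)<\varepsilon$.

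Combining these via the triangle inequality in the Hausdorff metric,
\[
d_C(\lim_n C_n,\lim_n D_n)\le d_C(\lim_n C_n,C_k)+d_C(C_k,\rng\varphi_k)+d_C(\rng\varphi_k,\lim_n D_n)<3\varepsilon.
\]
Since $\varepsilon$ was arbitrary, the two compact sets have Hausdorff distance zero and therefore coincide as subsets of $C$.

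I do not expect any real obstacle here, since condition (iii) of the (non-strong) twisted approximation is not even needed for the argument as structured; all the work is done by (i') together with (ii) and the convergence hypothesis. If one preferred a proof that highlighted the connection with Proposition \ref{twisted}, the alternative plan would be to invoke that proposition to produce the affine homeomorphism $\varphi\colon\lim_n C_n\to\lim_n D_n$ with $\varphi(x)=\lim_n\varphi_n(x)$ on the union $\bigcup_n C_n$, note that by (i') this limit equals $x$ for each such $x$, and then conclude by density and continuity that $\varphi=\id$, so the range $\lim_n D_n$ coincides with the domain $\lim_n C_n$. Either route yields the statement with essentially no additional calculation.
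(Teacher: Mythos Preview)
Your proof is correct. The paper's own proof takes exactly your \emph{alternative} route: it invokes Proposition~\ref{twisted} to obtain the affine homeomorphism $\varphi:\lim_n C_n\to\lim_n D_n$, observes that (i') forces $\varphi$ to be the identity on $\bigcup_n C_n$, and concludes by density. Your primary argument is a genuinely different and more elementary approach: rather than building $\varphi$ and then recognizing it as $\id$, you use (i') and (ii) directly to squeeze the Hausdorff distance between the two limits to zero via the intermediate sets $\rng\varphi_k$. This buys you a self-contained argument that avoids the machinery of Proposition~\ref{twisted} entirely and, as you observe, does not even need condition (iii); the paper's route, by contrast, makes the affine-homeomorphism conclusion explicit (which is the content one actually cares about in applications) at the cost of relying on the previous proposition. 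Both are short and either would be acceptable here; your remark that the paper already hints at the Hausdorff-distance viewpoint in the sentence preceding the proposition is apt.
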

\begin{proof}
  Note that the affine homeomorphism constructed in
  Proposition \ref{twisted} is actually the identity on
  $\bigcup_n C_n$ and hence on all of $\lim_n C_n$.
\end{proof}

\section{$S$-extensions of metric
  spaces}\label{sec:sextensions}

In this section we define the $S$-extensions of metric
spaces. The definitions below stem from a simple observation
that any finite metric space $(X,d_X)$ is isometric to the
extreme boundary of a convex compact subset of
$\ell_\infty^n$ for some $n\in\N$. Indeed, if
$X=\{x_1,\ldots,x_n\}$, then the map $x_i\mapsto
a_i=(d_X(x_i,x_1),\ldots,d_X(x_i,x_n))$ maps $X$
isometrically into the extreme boundary of the convex hull
of $\{a_i:i\leq n\}$ taken with the $\ell_\infty$ metric.

Given a metric space $(X,d_X)$, together with its
(enumerated) countable dense set $D\subseteq X$ and a
countable (enumerated) family $\F\subseteq C(X,[0,1])$ of
continuous functions with values in the unit interval, we
will form a convex compact set $S(X,d_X,D,\F)$. Let
$D=(d_n:n\in\N)$ and $\F=(f_n:n\in\N)$. Consider the vectors
$a_n=(f_1(d_n),f_2(d_n),\ldots)\in[0,1]^\N$ and let
$S_n(X,d_X,D,\F)$ be the convex hull of the set $\{a_i:i\leq
n\}$ in $[0,1]^\N$ Note that the sequence $S_n(X,d_X,D,\F)$
is increasing, hence convergent in $K([0,1]^\N)$.

\begin{definition}
  Given a metric space $(X,d_X)$ define $S(X,d_X,D,\F)$ as
  $\lim_{n\to\infty}S_n(X,d_X,D,\F)$.
\end{definition}

We will always consider $S(X,d_X,D,F)$ with the metric
$d_{[0,1]^\N}$ induced from the Hilbert cube. Note that
since all $S_n(X,d_X,D,\F)$'s are convex, so is
$S(X,d_X,D,\F)$. In general, this is all we know about this
set. We will show, however, that if the functions $\F$ are
carefully chosen, then $S(X,d_X,D,\F)$ is a simplex and its
set of extreme points contains a dense homeomorphic copy of
$X$.

For each $n$ write $S_n^n(X,d_X,D,\F)$ for the projection of
$S_n(X,d_X,D,\F)$ into the first $n$-many
coordinates. Treating $[0,1]^n$ as a subset of the Hilbert
cube, we see that $S(X,d_X,D,\F)$ is also the limit of the
sets $S_n^n(X,d_X,D,\F)$. 

\begin{definition}
  We say that a countable family $\F$ of Lipschitz 1
  functions on a metric space $(X,d_X)$ is
  \textit{saturated} if
  \begin{itemize}
  \item for every $x\in X$ the distance function $z\mapsto
    d_X(z,x)$ belongs to the uniform closure of $\F$
  \item for every $x_1,\ldots,x_n\in X$ there are
    $f_1,\ldots,f_n\in\F$ such that the vectors
    $(f_1(x_1),\ldots,f_n(x_1)),\ldots,(f_1(x_n),\ldots,f_n(x_n))\in\R^n$
    are linearly independent.
  \end{itemize}
\end{definition}

Note that for any metric space the set of distance functions
to a dense countable subset of the space satisfies the first
condition above. However, in general, the set of distance
functions does not satisfy the second condition above (with
a minimal example being a space having four elements).

We first show that the definition of $S(X,d_X,D,\F)$ does not
depend on the choice of the dense set $D$, in particular it
does not depend on the enumeration of $D$. From the point of
view of further applications, we would need the fact that if
$D$ and $E$ are two countable dense sets in $X$, then
$S(X,d_X,D,\F)$ and $S(X,d_X,E,\F)$ are affinely
homeomorphic. However, they are actually the same set.

\begin{lemma}\label{indep1}
  If $\F$ is saturated, and $D,E$ are two countable dense
  subsets of $X$, then $S(X,d_X,D,\F)$ and $S(X,d_X,E,\F)$
  are equal.
\end{lemma}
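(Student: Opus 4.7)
The plan is to show that both $S(X,d_X,D,\F)$ and $S(X,d_X,E,\F)$ coincide with a single set that depends only on $(X,d_X)$ and $\F$, namely the closed convex hull of the image of $X$ under the ``coordinate'' map $\Phi\colon X\to[0,1]^\N$ defined by $\Phi(x)=(f_1(x),f_2(x),\ldots)$. Since each $f_n\in C(X,[0,1])$, the map $\Phi$ is continuous into $[0,1]^\N$ with its product topology, and the vectors $a_n$ from the definition above are precisely $\Phi(d_n)$.

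First I would unwind the definition. Writing $D=(d_n:n\in\N)$, the set $S_n(X,d_X,D,\F)=\conv(\Phi(\{d_1,\ldots,d_n\}))$ is an increasing sequence of compact convex subsets of the compact space $[0,1]^\N$, and for any such sequence the Hausdorff limit coincides with the closure of the union. Hence $S(X,d_X,D,\F)=\overline{\conv(\Phi(D))}$, and the analogous identity holds for $E$.

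Next I would invoke the standard fact that $\overline{\conv(A)}=\overline{\conv(\overline{A})}$ for any subset $A$ of a locally convex space (the nontrivial inclusion follows since $\overline{\conv(A)}$ is closed and convex, so it contains $\overline{A}$ and hence $\conv(\overline{A})$). Since $D$ is dense in $X$ and $\Phi$ is continuous, $\overline{\Phi(D)}\supseteq\Phi(X)$, so $\overline{\Phi(D)}=\overline{\Phi(X)}$, giving $S(X,d_X,D,\F)=\overline{\conv(\Phi(X))}$. Running the same argument with $E$ in place of $D$ yields the same limit set, and the two sets are therefore equal.

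There is really no substantive obstacle here: the argument is a routine density-and-closure manipulation, and it in fact uses only that $\F\subseteq C(X,[0,1])$, never the saturation hypothesis. This reflects the strengthening from ``affinely homeomorphic'' to ``equal'' that is highlighted immediately before the statement of the lemma; saturation will be needed only for the subsequent results that identify a homeomorphic copy of $X$ inside the extreme boundary of $S(X,d_X,D,\F)$.
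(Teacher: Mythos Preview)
Your argument is correct and is genuinely simpler than the paper's. You identify $S(X,d_X,D,\F)$ with the closed convex hull of $\Phi(X)$ by a routine density-and-closure computation, using only that the $f_n$ are continuous; as you observe, saturation plays no role. The paper instead builds an explicit sequence of affine maps $\varphi_m:S_m(X,d_X,D,\F)\to[0,1]^\N$ sending each vertex $b_i=\Phi(d_i)$ to a nearby vertex $c_{k^i_m}=\Phi(e_{k^i_m})$, and verifies that this is a \emph{strong twisted approximation} in the sense of Section~\ref{sec:twisted}, invoking Proposition~\ref{stwisted} to conclude equality. That route genuinely uses saturation: the Lipschitz~1 hypothesis is needed to get the estimate $d_{[0,1]^\N}(b_i,\varphi_m(b_i))<1/m$ (and thus the conditions (i$'$) and (iii)), and the linear-independence clause is needed so that the $b_i$'s are affinely independent and the affine maps $\varphi_m$ are well defined. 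Your approach is therefore both shorter and strictly more general; the paper's approach has the side benefit of exercising the twisted-approximation machinery, but that machinery is not otherwise used in its ``strong'' form. The paper itself remarks immediately after the proof that the lemma amounts to saying $S(X,d_X,\F)=\conv(\Phi(X))$, which is exactly your formulation.
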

\begin{proof}
  Let $D=(d_n:n\in\N)$ and $E=(e_n:n\in\N)$. Write
  $b_n=(f_1(d_n),f_2(d_n),\ldots)\in[0,1]^\N$ and
  $c_n=(f_1(e_n),f_2(e_n),\ldots)\in[0,1]^\N$. Write $B_n$
  for $S_n(X,d_X,D,F)$ and $C_n$ for $S_n(X,d_X,E,F)$. Note
  that $\{b_n:n\in\N\}$ as well as $\{c_n:n\in\N\}$ are
  affinely independent. Now $S(X,d_X,D,\F)=\lim_n B_n$ and
  $S(X,d_X,E,\F)=\lim_n C_n$. We will construct a strong
  twisted approximation from $(B_n:n\in\N)$ to
  $(C_n:n\in\N)$.

  For each $n$ choose a subsequence $(k^n_m:m\in\N)$ such
  that $$d_X(d_n,e_{k^n_m})<1\slash m$$ for each
  $m\in\N$. For $m\in\N$, let $\varphi_m:B_m\rightarrow
  [0,1]^\N$ be the affine map which maps $b_i$ to
  $c_{k^i_m}$ for $i\leq n$. We claim that
  $(\varphi_k:k\in\N)$ is a strong twisted approximation.

  Note that $d_{[0,1]^\N}(a_i,\varphi_m(a_i))<1\slash m$ for
  each $m\in\N$ since the functions in $\F$ are Lipschitz
  1. By Proposition \ref{affproximity}, this implies (i')
  and (iii). To see (ii), note that $\rng\varphi_n\subseteq
  C$ for each $n$, so it is enough to show that for each
  $\varepsilon>0$ there is $n_0\in\N$ such that
  $\rng\varphi_n$ is $\varepsilon$-dense in $C$ for every
  $n>n_0$. Pick $\varepsilon>0$ and let $n_1\in\N$ be big
  enough so that $[0,1]^{n_1}$ is
  \mbox{$\varepsilon\slash4$}-dense in $[0,1]^\N$ (treat
  $[0,1]^n$ as a subset of $[0,1]^\N$ via the embedding
  $(x_1,\ldots,x_n)\mapsto(x_1,\ldots,x_n,0,0,\ldots)$). Write
  $C_i^{n_1}$ for the projection of $C_i$ to $[0,1]^{n_1}$
  and note that there is $n_2>n_1$ such that $C^{n_1}_{n_2}$
  is $\varepsilon\slash4$-dense in $C^{n_1}_n$ for every
  $n>n_2$. Thus, $C_{n_2}$ is $\varepsilon\slash 2$-dense in
  $\lim_n C_n$. For each $i\leq n_2$ pick $l_i$ so that
  $d_X(e_i,d_{l_i})<\varepsilon\slash 4$. Pick $n_0>n_2$ so
  that $n_0>l_i$ for each $i\leq n_2$ and $1\slash
  n_0<\varepsilon\slash 4$ and hence
  $$d_X(e_i,e_{k^{l_i}_{n_0}})<\varepsilon\slash
  2\quad\mbox{for each }i\leq n_2.$$ Then
  $$d_{[0,1]^\N}(c_i,\varphi_{n_0}(b_{l_i}))<\varepsilon\slash
  2\quad\mbox{for each }i\leq n_2,$$ which implies that
  $\rng\varphi_{n_0}$ is $\varepsilon$-dense in $B$, as well
  as is $\rng\varphi_n$ for every $n>n_0$ since the ranges
  increase. This ends the proof.
\end{proof}

In the sequel, we write $S(X,d_X,\F)$ rather than
$S(X,d_X,D,\F)$. Another way of stating the previous lemma
is then to say that $S(X,d_X,\F)$ is the closed convex hull
of the set $\{(f_1(x),f_2(x),\ldots):x\in X\}$. In
principle, this can be taken as the definition of
$S(X,d_X,F)$ but in further arguments we will use the
approximation of $S(X,d_X,F)$ by $S_n(X,d_X,D,F)$.

Note that $D$ can be seen as a subset of $S(X,d_X,D,\F)$ via
the map $d\mapsto(f_1(d),f_2(d),\ldots)$. Note also that
since all distance functions to the points of $D$ are in the
uniform closure of $\F$, the above map is an
embedding. Denote this map by $i^D_\F$.

\begin{lemma}\label{embedding}
  If $\F$ is saturated, then there is a canonical
  homeomorphic embedding $i_F$ of $X$ into $S(X,d_X,\F)$
  which extends $i_\F^D$ for all countable dense $D\subseteq
  X$ and is Lipschitz 1.
\end{lemma}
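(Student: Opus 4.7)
The plan is to define $i_\F$ by the only natural formula: for $x\in X$ set
$$i_\F(x) = (f_1(x),f_2(x),\ldots)\in[0,1]^\N.$$
This evidently agrees with $i_\F^D(d)=(f_1(d),f_2(d),\ldots)$ on any countable dense $D$, so the extension property is automatic; the content of the lemma is that $i_\F(X)\subseteq S(X,d_X,\F)$ and that $i_\F$ is a Lipschitz $1$ homeomorphism onto its image.

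First I would verify the Lipschitz estimate directly from the metric on $[0,1]^\N$: since every $f_n\in\F$ is Lipschitz $1$,
$$d_{[0,1]^\N}(i_\F(x),i_\F(y)) = \sum_n 2^{-n}|f_n(x)-f_n(y)| \le \sum_n 2^{-n} d_X(x,y) = d_X(x,y).$$
In particular $i_\F$ is continuous. Next I would show that $i_\F(x)\in S(X,d_X,\F)$ for every $x\in X$: pick any sequence $(d_{k_n})\subseteq D$ with $d_{k_n}\to x$; by continuity of the $f_n$ the corresponding points $a_{k_n}=i_\F^D(d_{k_n})\in S(X,d_X,\F)$ converge coordinatewise, hence in $[0,1]^\N$, to $i_\F(x)$, and $S(X,d_X,\F)$ is closed in $[0,1]^\N$.

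The main substantive point, for which the first clause of saturation is invoked, is that $i_\F$ is injective with continuous inverse. Given $x,y\in X$ and $\varepsilon>0$, choose $f\in\F$ with $\|f-d_X(\cdot,x)\|_\infty<\varepsilon$. Since $d_X(x,x)=0$, this $f$ satisfies $f(x)<\varepsilon$ and $f(y)>d_X(x,y)-\varepsilon$, so if $x\neq y$ one gets $f(x)\neq f(y)$ for $\varepsilon$ small enough, proving injectivity. For continuity of $i_\F^{-1}$ suppose $i_\F(x_n)\to i_\F(x)$; given $\varepsilon>0$ pick $k$ so that $\|f_k-d_X(\cdot,x)\|_\infty<\varepsilon/3$. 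Coordinatewise convergence gives $f_k(x_n)\to f_k(x)$, and the triangle inequality
$$d_X(x_n,x)\le |d_X(x_n,x)-f_k(x_n)|+|f_k(x_n)-f_k(x)|+|f_k(x)-d_X(x,x)|$$
yields $d_X(x_n,x)<\varepsilon$ for all sufficiently large $n$.

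I do not expect the second clause of saturation (linear independence) to be used here; it will matter later for the simplex/extreme-point analysis. The only mild delicacy in the argument above is matching $f\in\F$ to the distance function $d_X(\cdot,x)$ as a function with values in $[0,1]$, but this is consistent with the standing assumption that the metric spaces we consider have diameter at most $1$, so the distance functions really do lie in $C(X,[0,1])$ and can indeed be approximated uniformly by elements of $\F$.
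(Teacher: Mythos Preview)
Your proposal is correct and follows essentially the same approach as the paper: define $i_\F(x)=(f_1(x),f_2(x),\ldots)$, use the Lipschitz~1 property of the $f_n$ for the Lipschitz bound, and use that distance functions lie in the uniform closure of $\F$ to obtain injectivity and continuity of the inverse via the sequential criterion. The only cosmetic difference is that for $i_\F(x)\in S(X,d_X,\F)$ you give a direct limit argument (approximating $x$ by points of $D$), whereas the paper invokes Lemma~\ref{indep1} (choosing a dense set containing $x$); both are immediate, and your observation that the second saturation clause is not needed here is also correct.
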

\begin{proof}
  Write $i_\F(x)=(f_1(x),f_2(x),\ldots)$ and note that
  $i_\F(x)\in S(X,d_X,\F)$ for every $x\in X$ by Lemma
  \ref{indep1}. It is clear that, when viewed as a map to
  $S(X,d_X,D,\F)$, $i_\F$ extends $i^D_\F$. The fact that
  $i_\F$ is an embedding of $X$ into $S(X,d_X,D,\F)$ follows
  from the fact that all distance functions to the points
  in $D$ are in the uniform closure of $\F$.

  To see that $i_\F$ is a homeomorphism, suppose that
  $x_n\in X$ and $x\in X$ are such that $i_\F(x_n)\to
  i_F(x)$. This means that $f(x_n)\to f(x)$ for every $f\in
  F$. Again, since the distance functions from all points in
  $X$ are in the uniform closure of $F$ we have $d(x_n,y)\to
  d(x,y)$ for every $y\in X$ and hence $x_n\to x$, which
  shows that $i_\F$ is a homeomorphism.

  Finally, the fact that $i_\F$ is Lipschitz 1 follows
  immediately from the fact that all functions in $\F$ are
  Lipschitz 1.
\end{proof}

In the sequel we will abuse notation and treat $X$ as a
subset of $S(X,d_X,\F)$, unless this can cause confusion.

\bigskip

Now, we need to locate the extreme points of the sets
$S(X,d_X,F)$. Notice that if $F_1\subseteq F_2$ are two
families of Lipschitz $1$ functions, then there is a natural
affine continuous projection from $S(X,d_X,F_2)$ to
$S(X,d_X,F_1)$ (which forgets the coordinates from
$F_2\setminus F_1$).

Note also that if $K$ and $L$ are convex compact sets and
$\varphi:K\rightarrow L$ is an affine continuous surjection,
then $\ext(L)\subseteq\varphi''\ext(K)$. This follows from
the fact that if $x\in L$ is an extreme point of $L$, then
$\varphi^{-1}(\{x\})$ is a compact convex set, so it
contains a relative extreme point, say $y$. Now $y$ must be
extreme in $K$ since otherwise $y$ is a nontrivial affine
combination of $y_1,y_2\in K$. Since $y$ was extreme in
$\varphi^{-1}(\{x\})$, these points cannot belong to
$\varphi^{-1}(\{x\})$ and hence
$\varphi(y_1)\not=\varphi(y_2)$, which contradicts the fact
that $x$ was extreme in $L$.

The above implies that the larger the family $F$ we take,
the more extreme points we get in $S(X,d_X,F)$. The lemma
below says that if $F$ is saturated (in fact here we only
need the first item from the definition), then we get quite
enough of them.

\begin{lemma}\label{extreme}
  If $\F$ is saturated, then the points of $X$ are extreme
  points in $S(X,d_X,\F)$.
\end{lemma}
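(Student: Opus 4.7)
The plan is to associate to each $x_0 \in X$ a nonnegative affine continuous function $\Phi_{x_0}$ on $S(X,d_X,\F)$ that vanishes only at the point $i_\F(x_0)$. Once we have such a function, the extremality of $i_\F(x_0)$ is immediate: if $i_\F(x_0) = \tfrac{1}{2}(p+q)$ with $p,q \in S(X,d_X,\F)$, then $0 = \Phi_{x_0}(i_\F(x_0)) = \tfrac{1}{2}(\Phi_{x_0}(p)+\Phi_{x_0}(q))$ forces both values to be zero, whence $p = q = i_\F(x_0)$.

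To build $\Phi_{x_0}$, invoke the first saturation clause to choose $f_{n_k} \in \F$ with $f_{n_k} \to g_{x_0}$ uniformly on $X$, where $g_{x_0}(y) = d_X(y,x_0)$. Each coordinate functional $L_{n_k}(p) = p(n_k)$ is continuous and affine on $[0,1]^\N$ and satisfies $L_{n_k}(i_\F(y)) = f_{n_k}(y)$. On a convex combination $r = \sum_j c_j\, i_\F(d_j)$ one computes $|L_{n_k}(r) - L_{n_j}(r)| \le \sum_j c_j |f_{n_k}(d_j) - f_{n_j}(d_j)| \le \|f_{n_k}-f_{n_j}\|_\infty$. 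Since such convex combinations are dense in $S(X,d_X,\F)$ and the compact set has diameter controlled by the sup norm bound, the sequence $(L_{n_k})$ is uniformly Cauchy on all of $S(X,d_X,\F)$ and converges uniformly to a continuous affine function $\Phi_{x_0}$. By construction $\Phi_{x_0}(i_\F(y)) = g_{x_0}(y) = d_X(y,x_0) \ge 0$, and by density of convex combinations of the $i_\F(d_j)$ together with continuity, $\Phi_{x_0} \ge 0$ on the whole of $S(X,d_X,\F)$.

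Now suppose $\Phi_{x_0}(p) = 0$ for some $p \in S(X,d_X,\F)$. Write $p = \lim_l r_l$ with $r_l = \sum_j c_j^l\, i_\F(d_j^l)$ a convex combination in $S_{n_l}(X,d_X,D,\F)$; by continuity of $\Phi_{x_0}$, $\sum_j c_j^l\, d_X(d_j^l,x_0) \to 0$. Because every $f_m \in \F$ is Lipschitz $1$, for any fixed $m$,
\begin{equation*}
|L_m(r_l) - f_m(x_0)| \le \sum_j c_j^l |f_m(d_j^l) - f_m(x_0)| \le \sum_j c_j^l\, d_X(d_j^l, x_0) \longrightarrow 0,
\end{equation*}
so $p(m) = \lim_l L_m(r_l) = f_m(x_0) = i_\F(x_0)(m)$ for every $m$. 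Hence $p = i_\F(x_0)$, which completes the plan.

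The potentially subtle step is the passage from $\F$ to its uniform closure, i.e.\ verifying that the affine functional defined by a distance function $g_{x_0} \in \overline{\F}$ really extends continuously from the dense set of convex combinations of $i_\F(D)$ to all of $S(X,d_X,\F)$. The Lipschitz $1$ assumption in the definition of saturated is what makes both this extension and the final coordinate-by-coordinate comparison work; the second clause of saturation is not needed here and is presumably used elsewhere (e.g.\ to show that $S(X,d_X,\F)$ is a simplex).
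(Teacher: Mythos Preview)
Your proof is correct and takes a genuinely different route from the paper's.

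The paper argues geometrically: it writes $\{i_\F(d_1)\}$ as $\bigcap_n A_n$ where $A_n$ is the closed convex hull of the $1/n$-ball around $d_1$, and then shows that for every $\varepsilon>0$ the set $A_n$ is an $\varepsilon$-face of $S(X,d_X,\F)$ once $n$ is large. The key computation (their Claim~\ref{estimates}) is exactly your final Lipschitz estimate in disguise: if the coordinate approximating $d_X(\cdot,d_1)$ is small at a convex combination, then every other coordinate is close to its value at $d_1$. You package the same ingredients more cleanly by building a single nonnegative affine continuous functional $\Phi_{x_0}$ vanishing only at $i_\F(x_0)$; this in fact proves the stronger statement that each point of $i_\F(X)$ is \emph{exposed}, not merely extreme. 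Both arguments use only the first saturation clause and the Lipschitz~1 hypothesis, and your closing remark that the second clause is not needed here is correct.

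One cosmetic point: in your displayed Cauchy estimate the index $j$ in $L_{n_j}$ collides with the summation index $j$; you presumably meant a second subsequence index such as $l$.
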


Given a compact convex set $C$ in a locally convex vector
topological space, with a metric $d_C$ on $C$, and a subset
$A\subseteq C$ we say that $A$ is an
\textit{$\varepsilon$-face} provided that for every $x,y\in
C$ if $\frac{1}{2}(x+y)\in A$, then both $d_C(x,A)$ and
$d_C(y,A)$ are smaller than $\varepsilon$. Note that if
$A\subseteq C$ is written as $A=\bigcap_n A_n$ so that each
$A_n$ is closed convex and for each $\varepsilon>0$ there is
$n_0$ such that $A_n$ is an $\varepsilon$-face for each
$n>n_0$, then $A$ is a face.

\begin{proof}[Proof of Lemma \ref{extreme}]
  We will again look at $S(X,d_X,D,\F)$ for a fixed
  countable dense set $D\subseteq X$. By Proposition
  \ref{indep1}, it is enough to show that the points of $D$
  are extreme in $S(X,d_X,D,\F)$. Moreover, it is enough to
  show that given the enumeration of $D$ as
  $(d_1,d_2,\ldots)$, the point $d_1$ is extreme in
  $S(X,d_X,D,\F)$.

  \begin{claim}\label{convdiam}
    For any $A\subseteq X$ we have
    $\diam_{S(X,d_X,\F)}(\conv(i_\F A))\leq\diam_X(A)$.
  \end{claim}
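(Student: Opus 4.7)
The plan is to prove the claim in two short steps: first show that the embedding $i_\F$ is 1-Lipschitz as a map from $(X,d_X)$ into $([0,1]^\N, d_{[0,1]^\N})$, and then invoke the fact that taking (closed) convex hulls does not increase diameter in a normed space.

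For the first step, I would compute directly. For $x,y\in X$, using only that every $f_n\in\F$ is Lipschitz $1$,
\[
d_{[0,1]^\N}(i_\F(x),i_\F(y)) \;=\; \sum_{n} 2^{-n}|f_n(x)-f_n(y)| \;\leq\; \sum_n 2^{-n} d_X(x,y) \;=\; d_X(x,y).
\]
In particular, $\diam_{S(X,d_X,\F)}(i_\F(A))\leq \diam_X(A)$.

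For the second step, recall from Section \ref{sec:convex} that the metric $d_{[0,1]^\N}$ on $S(X,d_X,\F)$ is the restriction of a norm $\|\cdot\|$ coming from an ambient locally convex space (open balls in such metrics being convex, per the paper's conventions). For such a norm metric, if $u=\sum_i \alpha_i a_i$ and $v=\sum_j \beta_j b_j$ are two convex combinations of points of $i_\F(A)$, then
\[
\|u-v\| \;=\; \Bigl\|\sum_{i,j}\alpha_i\beta_j(a_i-b_j)\Bigr\| \;\leq\; \sum_{i,j}\alpha_i\beta_j\|a_i-b_j\| \;\leq\; \diam_{S(X,d_X,\F)}(i_\F(A)).
\]
Hence every two points of the (algebraic) convex hull of $i_\F(A)$ are at distance at most $\diam_{S(X,d_X,\F)}(i_\F(A))$, and by continuity of the metric the same bound extends to the closure, i.e.\ to $\conv(i_\F A)$. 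Combining with Step 1 gives $\diam_{S(X,d_X,\F)}(\conv(i_\F A))\leq\diam_X(A)$, as desired.

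There is no real obstacle here; both ingredients are elementary. The only thing to keep straight is that $\conv$ in this paper denotes the \emph{closed} convex hull, which is why the continuity of $\|\cdot\|$ is used at the end of Step 2.
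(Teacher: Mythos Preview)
Your proof is correct and follows essentially the same approach as the paper, which dispatches the claim in one line by invoking that $i_\F$ is Lipschitz~1 (already established in Lemma~\ref{embedding}) together with local convexity of the Hilbert cube. You have simply unpacked both ingredients explicitly: the direct computation of the Lipschitz bound and the standard norm-inequality argument that passing to the closed convex hull does not increase diameter.
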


  \begin{proof}
    This follows directly from the fact that $i_\F$ is
    Lipschitz 1 and from local convexity of the Hilbert
    cube.
  \end{proof}

  Write $A_n$ for the closed convex hull of the ball in $X$
  around $d_1$ of diameter $1\slash n$. Claim \ref{convdiam}
  implies that $\bigcap_n A_n$ contains only one point, and
  thus is the singleton $\{d_1\}$. We will show that for
  each $\varepsilon>0$ there is $n_0$ such that $A_n$ is an
  $\varepsilon$-face for every $n>n_0$.

  \begin{claim}\label{estimates}
    Suppose $\delta\geq0$ and $a_1,\ldots,a_k\in[0,1]$,
    $\alpha_1,\ldots,\alpha_k\in[0,1]$ are such that $\sum_i
    \alpha_i=1$. If $b\in[0,1]$ and $b_1,\ldots,b_k\in[0,1]$
    are such that $|b-b_i|\leq a_i+\delta$ for each $i\leq
    k$, then $$|b-\sum_{i=1}^k \alpha_i
    b_i|\leq\sum_{i=1}^k\alpha_i a_i+\delta.$$
  \end{claim}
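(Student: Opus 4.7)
The claim is a routine convexity estimate that follows directly from the triangle inequality combined with the hypothesis $\sum_i \alpha_i = 1$. The plan is to rewrite $b$ as the convex combination $b = \sum_{i=1}^k \alpha_i b$ (using $\sum_i \alpha_i = 1$), so that the quantity to be bounded becomes $\bigl|\sum_{i=1}^k \alpha_i (b - b_i)\bigr|$.

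From there, I would apply the triangle inequality to pull the absolute value inside the sum, obtaining the bound
$$\Bigl|b - \sum_{i=1}^k \alpha_i b_i\Bigr| = \Bigl|\sum_{i=1}^k \alpha_i (b - b_i)\Bigr| \leq \sum_{i=1}^k \alpha_i |b - b_i|.$$
Using the hypothesis $|b - b_i| \leq a_i + \delta$ for each $i$, together with $\alpha_i \geq 0$, this gives
$$\sum_{i=1}^k \alpha_i |b-b_i| \leq \sum_{i=1}^k \alpha_i (a_i + \delta) = \sum_{i=1}^k \alpha_i a_i + \delta \sum_{i=1}^k \alpha_i = \sum_{i=1}^k \alpha_i a_i + \delta,$$
which is exactly the desired inequality. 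The assumption that all quantities lie in $[0,1]$ is not actually used in this chain of estimates; it is presumably recorded only because the claim will be applied to coordinates of points in the Hilbert cube in the subsequent argument.

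There is no real obstacle here: the statement is a one-line consequence of the triangle inequality for convex combinations, and no sophisticated tools or case analysis are required. The only subtlety worth flagging is the role of the slack parameter $\delta$, which simply rides through the computation because $\sum_i \alpha_i = 1$ collapses $\delta \sum_i \alpha_i$ back to $\delta$ rather than amplifying it.
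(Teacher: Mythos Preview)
Your proof is correct and follows exactly the same approach as the paper: rewrite $b$ as $\sum_i \alpha_i b$, apply the triangle inequality, use the hypothesis $|b-b_i|\leq a_i+\delta$, and collapse $\delta\sum_i\alpha_i$ to $\delta$. Your observation that the $[0,1]$ bounds are not needed for the argument itself is also accurate.
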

  \begin{proof}
    This is just a straightforward computation. Note that we
    have $|b-\sum_i \alpha_i
    b_i|=|\sum_i\alpha_i(b-b_i)|\leq\sum_i\alpha_i|b-b_i|\leq\sum_i\alpha_i(
    a_i+\delta)=\sum_i\alpha_i a_i +\delta$.
  \end{proof}

  Fix $\varepsilon>0$ and find a function $f\in\F$ which is
  $\varepsilon\slash 8$-uniformly close to the distance
  function $z\mapsto d(z,d_1)$. Without loss of generality
  assume that $f=f_1$.

  Suppose now that $n>8\slash\varepsilon$ and $y,z\in
  S(X,d_X,\F)$ are such that $\frac{1}{2}(y+z)\in
  A_n$. Approximate $y$ with $y'$ and $z$ with $z'$ such
  that $d_{S(X,d_X,\F)}(y,y')<\varepsilon\slash 8$,
  $d_{S(X,d_X,\F)}(z,z')<\varepsilon\slash 8$ and $y',z'\in
  S_k(X,d_X,D,\F)$ for some $k\in\N$. Note that then
  $\frac{1}{2}(y'+z')$ is $\varepsilon\slash 8$-close to
  $A_n$. Since the first coordinate of every point in $A_n$
  is smaller than $1\slash n+\varepsilon\slash 8$, the first
  coordinate of $\frac{1}{2}(y'+z')$ is smaller than
  $1\slash n+\varepsilon\slash 4$. Hence, the first
  coordinates of both $y'$ and $z'$ are smaller than
  $2(1\slash n+\varepsilon\slash 4)$, which is smaller than
  $3\varepsilon\slash 4$. Since the first coordinate of
  points in $A_n$ is given by the function that is
  $\varepsilon\slash 8$-close to the the distance function
  from $d_1$ and all other coordinates are given by
  functions that extend Lipschitz 1 functions, Claim
  \ref{estimates} implies that
  $$d_{S(X,d_X,\F)}(y',d_1)<3\varepsilon\slash
  4+\varepsilon\slash 8\quad\mbox{as well as}\quad
  d_{S(X,d_X,\F)}(z',d_1)<3\varepsilon\slash 4+\varepsilon\slash
  8.$$ This implies that
  $d_{S(X,d_X,\F)}(y,d_1)<3\varepsilon\slash
  4+\varepsilon\slash 4=\varepsilon$ and
  $d_{S(X,d_X,\F)}(z,d_1)<3\varepsilon\slash
  4+\varepsilon\slash 4=\varepsilon$, which shows that $A_n$
  is an $\varepsilon$-face.
\end{proof}

The fact that $X$ is dense in the set of extreme points of
$S(X,d_X,F)$ will follow from the following general lemma.

\begin{lemma}
  Suppose $C_n$ is an increasing sequence of compact convex
  subsets of a metrizable convex compact set $C$ in a
  locally convex topological vector space. Then
  $\bigcup_n\ext(C_n)$ is dense in $\ext(\lim_n C_n)$.
\end{lemma}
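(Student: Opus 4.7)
The plan is to derive the density statement as a direct application of Milman's partial converse to the Krein--Milman theorem, which asserts that if $K$ is a compact convex subset of a locally convex space and $K=\overline{\conv}(A)$ for some compact $A\subseteq K$, then $\ext(K)\subseteq A$.

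First I would identify the limit: since $(C_n)$ is increasing, the Hausdorff limit $K:=\lim_n C_n$ coincides with $\overline{\bigcup_n C_n}$. I then want to rewrite $K$ as the closed convex hull of $\bigcup_n\ext(C_n)$. In one direction, the Krein--Milman theorem applied to each $C_n$ gives $C_n=\overline{\conv}(\ext(C_n))\subseteq\overline{\conv}\bigl(\bigcup_m\ext(C_m)\bigr)$, so taking closures of the union yields $K\subseteq\overline{\conv}\bigl(\bigcup_m\ext(C_m)\bigr)$. In the other direction, $\bigcup_m\ext(C_m)\subseteq K$ and $K$ is closed and convex, so $\overline{\conv}\bigl(\bigcup_m\ext(C_m)\bigr)\subseteq K$. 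Hence the two sets are equal.

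Next I would set $A=\overline{\bigcup_m\ext(C_m)}$. As a closed subset of the compact set $C$, the set $A$ is compact. Since $\bigcup_m\ext(C_m)\subseteq A\subseteq K$ and $K$ is the closed convex hull of $\bigcup_m\ext(C_m)$, we also have $K=\overline{\conv}(A)$. Milman's theorem now applies and gives
\[
\ext(K)\subseteq A=\overline{\bigcup_m\ext(C_m)},
\]
which is precisely the density assertion of the lemma.

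The only nontrivial ingredient is Milman's theorem itself, which is standard in any locally convex topological vector space; the rest amounts to verifying that $\bigcup_m\ext(C_m)$ (after closure) is a compact generating set for $K$. I do not anticipate any real obstacle: the increasing assumption on $(C_n)$ is exactly what is needed to ensure $K=\overline{\bigcup_n C_n}$, and metrizability of $C$ is used only to phrase ``dense'' via a metric, though the argument as written is purely topological.
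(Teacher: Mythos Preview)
Your proof is correct. The paper takes a slightly different route: it fixes a point $x\in\lim_n C_n$ outside $A:=\overline{\bigcup_m\ext(C_m)}$, invokes the existence of a representing probability measure $\mu$ for $x$ supported on $A$, and then applies Bauer's theorem (an extreme point is the barycenter only of its Dirac measure) to conclude that $x$ cannot be extreme. Your argument via Milman's partial converse to Krein--Milman is more elementary and self-contained: Milman's theorem follows directly from Hahn--Banach separation and avoids the integral-representation machinery (barycenters, representing measures) altogether. The paper's approach is natural given its ambient Choquet-theoretic setting, but yours reaches the same conclusion with less overhead and in fact does not use metrizability at all.
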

\begin{proof}
  Write $K$ for the closure of $\bigcup_n\ext(C_n)$ and
  suppose $x\in\lim_n C_n$ is such that $x\notin K$. We need
  to show that $x$ is not an extreme point of $\lim_n
  C_n$. Note that \cite[Chapter 15, Proposition
  2.3]{handbook} $x$ is a bacycenter of a probability
  measure $\mu$ concentrated on $K$. By a theorem of Bauer
  \cite{bauer} (see also \cite[Proposition 1.4]{phelps}) if
  $x$ is an extreme point and a barycenter of a probability
  measure $\mu$, then $\mu=\delta_x$. But $\mu$ is
  concetranted on $K$, so cannot be equal to $\delta_x$.
\end{proof}

This immediately gives the following.

\begin{corollary}\label{density}
  If $\F$ is saturated, then $X$ is dense in the set of extreme
  points of $S(X,d_X,\F)$.
\end{corollary}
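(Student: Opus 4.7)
The plan is to deduce this directly from the preceding lemma by identifying $S(X,d_X,\F)$ as the limit of an explicit increasing sequence of convex compacta whose extreme points sit inside $i_\F(X)$. Fix a countable dense set $D=(d_n:n\in\N)\subseteq X$ and consider the sets $S_n=S_n(X,d_X,D,\F)$; by construction $S(X,d_X,\F)=\lim_n S_n$, and each $S_n$ is the convex hull of the finite set $\{a_1,\ldots,a_n\}$, where $a_i=i_\F(d_i)$.

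Next I would observe the trivial fact that the set of extreme points of the convex hull of a finite set is contained in that finite set, so $\ext(S_n)\subseteq\{a_1,\ldots,a_n\}=i_\F(\{d_1,\ldots,d_n\})$. Consequently
\[
\bigcup_n\ext(S_n)\subseteq i_\F(D)\subseteq i_\F(X).
\]
The preceding lemma, applied to the increasing sequence $(S_n)$ inside the Hilbert cube, gives that $\bigcup_n\ext(S_n)$ is dense in $\ext(S(X,d_X,\F))$. Hence $i_\F(X)$, which contains this union, is also dense in $\ext(S(X,d_X,\F))$.

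Finally, Lemma \ref{extreme} (which requires saturation of $\F$) tells us that every point of $i_\F(X)$ is itself extreme, so the inclusion $i_\F(X)\subseteq\ext(S(X,d_X,\F))$ together with the density established above yields the claim. There is no real obstacle here: the content has already been packaged into the two prior lemmas, and the corollary is just an application of them to the particular sequence $S_n(X,d_X,D,\F)$.
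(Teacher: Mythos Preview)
Your proof is correct and is exactly the argument the paper intends: apply the preceding lemma to the increasing sequence $S_n(X,d_X,D,\F)$, use that $\ext(S_n)\subseteq\{a_1,\ldots,a_n\}\subseteq i_\F(X)$, and conclude. Your explicit invocation of Lemma~\ref{extreme} to ensure $i_\F(X)\subseteq\ext(S(X,d_X,\F))$ is a welcome clarification that the paper leaves implicit.
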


Let us now see some examples of convex compact sets that can
arise as $S(X,d_X,\F)$. The next proposition will not be
used later in the proof but it shows some ideas behind the
constructions that follow.

In the following proposition, for a compact metric space
$X$, write $P(X)$ for the Bauer simplex of all Borel
probability measures on $X$. The extreme boundary of $P(X)$
is canonically homeomorphic to $X$ and any simplex whose
extreme boundary is homeomorphic to $X$ is canonically
affinely homeomorphic to $P(X)$ (this follows for example
from the positive solution to the Dirichlet extension
problem for Bauer simplices \cite{alfsen.dirichlet}).

As a comment to the assumption of the following proposition,
note that if $(X,d_X)$ is compact, then Lipschitz functions
are dense in $C(X)$, by the Stone--Weierstrass
theorem. Thus, if $(X,d_X)$ is compact, then the set of all
Lipschitz 1 functions is linearly dense in $C(X)$.

\begin{proposition}\label{bauer}
  Suppose $(X,d_X)$ is compact, $\F$ is saturated and
  linearly dense in $C(X)$. Then $S(X,d_X,\F)$ is affinely
  homeomorphic to the Bauer simplex $P(X)$.
\end{proposition}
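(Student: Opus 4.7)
The plan is to exhibit an explicit affine homeomorphism $\Phi: P(X) \to S(X,d_X,\mathcal{F})$ induced by integration against the family $\mathcal{F}$. Concretely, for $\mu \in P(X)$ set
\[
\Phi(\mu) = \Bigl(\int_X f_1\, d\mu,\ \int_X f_2\, d\mu,\ \ldots\Bigr) \in [0,1]^{\mathbb{N}}.
\]
Since each $f_n$ takes values in $[0,1]$, the right-hand side is indeed a point of the Hilbert cube. The map $\Phi$ is obviously affine, and it is continuous from the weak* topology on $P(X)$ to the product topology on $[0,1]^{\mathbb{N}}$, since each coordinate $\mu \mapsto \int f_n\, d\mu$ is weak* continuous.

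Next I would check that $\Phi$ lands inside $S(X,d_X,\mathcal{F})$ and is surjective onto it. For $x \in X$ we have $\Phi(\delta_x) = i_\mathcal{F}(x) \in S(X,d_X,\mathcal{F})$. Since $P(X)$ is the closed convex hull of the Dirac measures $\{\delta_x : x \in X\}$ in the weak* topology, and since $\Phi$ is continuous and affine, $\Phi(P(X))$ is a compact convex subset of $[0,1]^{\mathbb{N}}$ containing $i_\mathcal{F}(X)$. Hence
\[
S(X,d_X,\mathcal{F}) = \overline{\mathrm{conv}}\bigl(i_\mathcal{F}(X)\bigr) \subseteq \Phi(P(X)),
\]
and the reverse inclusion is immediate since $\Phi(P(X))$ is the continuous affine image of the closed convex hull of the $\delta_x$, hence contained in the closed convex hull of the $i_\mathcal{F}(x)$.

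For injectivity, suppose $\Phi(\mu) = \Phi(\nu)$, i.e.\ $\int f_n\, d\mu = \int f_n\, d\nu$ for every $n$. By linearity this equality extends to the linear span of $\mathcal{F}$, and by the assumption that $\mathcal{F}$ is linearly dense in $C(X)$ (together with a uniform approximation and the fact that $\mu,\nu$ are probability measures), we obtain $\int f\, d\mu = \int f\, d\nu$ for every $f \in C(X)$. The Riesz representation theorem then gives $\mu = \nu$. Thus $\Phi$ is a continuous affine bijection between the compact Hausdorff spaces $P(X)$ and $S(X,d_X,\mathcal{F})$, hence an affine homeomorphism.

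I do not expect any serious obstacle here: the only point requiring a little care is making sure that linear density of $\mathcal{F}$ in $C(X)$ genuinely suffices to separate probability measures (which it does, via uniform approximation on the compact space $X$), and that the weak* topology on $P(X)$ really does make each $\mu \mapsto \int f_n\, d\mu$ continuous (which is the definition). The saturation hypothesis on $\mathcal{F}$ is not actually used in this proposition; it only enters to ensure, via Lemmas \ref{embedding} and \ref{extreme} and Corollary \ref{density}, that $i_\mathcal{F}(X)$ is a homeomorphic copy of $X$ sitting densely inside $\ext(S(X,d_X,\mathcal{F}))$ — a consistency check with the known fact that $\ext(P(X))$ is canonically homeomorphic to $X$.
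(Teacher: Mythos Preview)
Your proof is correct and takes a genuinely different route from the paper's. The paper argues in two steps: first it uses saturation (via Lemma~\ref{extreme} and Corollary~\ref{density}) to identify $\ext(S(X,d_X,\F))$ with $X$, then it shows $S(X,d_X,\F)$ is a simplex by checking that distinct boundary measures $\mu,\nu$ on $X$ have distinct barycenters (finding $f\in\F$ with $\int f\,d\mu\neq\int f\,d\nu$ and tracking this through atomic approximations on $D$); the conclusion then rests on the cited fact that a simplex with closed extreme boundary homeomorphic to $X$ is canonically affinely homeomorphic to $P(X)$. You instead exhibit the affine homeomorphism $\Phi(\mu)=(\int f_n\,d\mu)_n$ directly, which is cleaner and more self-contained: it avoids the appeal to the Dirichlet-type uniqueness result for Bauer simplices, and it makes transparent that the barycenter computation in the paper's proof \emph{is} the map $\Phi$. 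Your observation that saturation plays no role in your argument is also correct---you only use that the $f_n$ are continuous (so $i_\F$ is continuous and $S(X,d_X,\F)=\overline{\conv}(i_\F(X))$) and that $\F$ is linearly dense in $C(X)$ (for injectivity). The paper's approach, by contrast, genuinely needs saturation to pin down the extreme boundary before invoking uniqueness; what it buys is consistency with the surrounding development, where the structure of $\ext(S(X,d_X,\F))$ is the main object of interest.
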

\begin{proof}
  Note that the set of extreme points of $S(X,d_X,\F)$ is
  equal to $X$ by Lemma \ref{extreme} and Corollary
  \ref{density}. We need to prove that $S(X,d_X,\F)$ is a
  simplex. For that, suppose $\mu,\nu$ are two distinct
  probability measures on $X$. By linear density of $\F$ in
  $C(X)$, there is $f\in \F$ such that $\int fd\mu\not=\int
  fd\nu$. Without loss of generality, assume that
  $f=f_1$. Pick a countable dense set $D\subseteq X$ and
  choose two sequences of atomic measures $\mu_n$ and
  $\nu_n$ concentrated on $D$ such that $\mu_n\to\mu$ and
  $\nu_n\to\nu$. Note that each $\mu_n$ and $\nu_n$ has a
  barycenter in one of the sets $S_k(X,d_X,D,\F)$, for if
  $$\mu_n=\sum_{i=1}^{k_n}\alpha^n_i
  \delta_{d^n_i}\quad\mbox{with}\quad d^n_i\in D,
  \alpha^n_i\geq 0,
  \sum_{i=1}^{k_n}\alpha^n_i=1,$$ $$\nu_n=\sum_{i=1}^{l_n}\beta^n_i
  \delta_{e^n_i}\quad\mbox{with}\quad e^n_i\in D,
  \beta^n_i\geq 0, \sum_{i=1}^{k_n}\beta^n_i=1,$$ then the
  barycenter of $\mu_n$ is $\sum_{i=1}^{k_n}\alpha^n_i
  d^n_i$ and the barycenter of $\nu_n$ is
  $\sum_{i=1}^{l_n}\beta^n_i e^n_i$. Note that then
  $\sum_{i=1}^{k_n}\alpha^n_i f_1(d^n_i)$ and
  $\sum_{i=1}^{l_n}\beta^n_i f_1(e^n_i)$ are the first
  coordinates in $[0,1]^\N$ of the barycenters of $\mu_n$
  and $\nu_n$. Thus, the first coordinate of the barycenter
  of $\mu$ is the limit of the first coordinates of the
  barycenters of $\mu_n$ \cite[Chapter 15, Proposition
  2.2]{handbook} and is equal
  to $$\lim_n\sum_{i=1}^{k_n}\alpha^n_i f_1(d^n_i)=\lim_n
  \int f_1d\mu_n=\int f_1 d\mu$$ and the first coordinate of
  the barycenter of $\nu$ is the limit of the first
  coordinates of the barycenters of $\nu_n$ and is equal
  to $$\lim_n\sum_{i=1}^{l_n}\beta^n_i f_1(e^n_i)=\lim_n
  \int f_1d\nu_n=\int f_1 d\nu.$$ Thus, $\mu$ and $\nu$ have
  different barycenters, which shows that $S(X,d_X,\F)$ is a
  simplex.
\end{proof}

Now we need to see how $S(X,d_X,\F)$ depends on the choice of
$F$. First note that it does not depend on the enumeration
of $F$.

\begin{lemma}\label{permut}
  Suppose $F$ is saturated and $G$ enumerates the same set of
  functions as $F$. Then $S(X,d_X,\F)$ and $S(X,d_X,G)$ are
  affinely homeomorphic.
\end{lemma}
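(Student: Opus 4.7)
The plan is to exploit the characterization of $S(X,d_X,\F)$ as the closed convex hull of the image $\{(f_1(x),f_2(x),\ldots):x\in X\}\subseteq[0,1]^\N$, which was extracted right after Lemma \ref{indep1}. Since $F$ and $G$ enumerate the same set of functions, there is a permutation $\sigma:\N\to\N$ with $g_n=f_{\sigma(n)}$ for every $n$. I would then introduce the coordinate-permutation map $T:[0,1]^\N\to[0,1]^\N$ defined by $T(x)(n)=x(\sigma(n))$.

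The next step is to observe that $T$ is an affine homeomorphism of the Hilbert cube. Affineness is clear since each coordinate is just a projection. Continuity of $T$ and $T^{-1}$ in the product topology follows because each of their coordinate projections is a single projection from $[0,1]^\N$. One has to be careful that the metric $d_{[0,1]^\N}$ is \emph{not} preserved by $T$, but the topology and affine structure on $[0,1]^\N$ are, and affine homeomorphism is a topological/affine notion only.

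Finally I would compute, for each $x\in X$,
$$T(f_1(x),f_2(x),\ldots)=(f_{\sigma(1)}(x),f_{\sigma(2)}(x),\ldots)=(g_1(x),g_2(x),\ldots),$$
so that $T$ maps the set $\{(f_n(x))_n:x\in X\}$ bijectively onto $\{(g_n(x))_n:x\in X\}$. Since an affine homeomorphism sends closed convex hulls to closed convex hulls, and by Lemma \ref{indep1} (and the remark following it) these sets have closed convex hulls $S(X,d_X,\F)$ and $S(X,d_X,G)$ respectively, we conclude $T(S(X,d_X,\F))=S(X,d_X,G)$.

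There is no real obstacle here; the lemma is almost formal once the closed-convex-hull description from Lemma \ref{indep1} is in hand. The only very mild subtlety worth flagging is the need to point out that the change-of-metric on $[0,1]^\N$ caused by the permutation is harmless because affine homeomorphism is a purely topological/affine invariant, so we do not need $T$ to be an isometry for $d_{[0,1]^\N}$.
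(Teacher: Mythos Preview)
Your proof is correct and follows essentially the same approach as the paper: define the coordinate-permutation map on the Hilbert cube induced by the permutation taking $F$ to $G$, observe it is an affine homeomorphism, and note that it carries $S(X,d_X,F)$ onto $S(X,d_X,G)$. Your write-up is somewhat more detailed (in particular your remark that $T$ need not be a $d_{[0,1]^\N}$-isometry, only an affine homeomorphism, is a useful clarification), but the underlying argument is identical to the paper's.
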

\begin{proof}
  Let $\pi:\N\to\N$ be a permutation such that
  $G=(f_{\pi(1)},f_{\pi(2)},\ldots)$. Let
  $h:[0,1]^\N\to[0,1]^\N$ be defined as
  $h(x_1,x_2,\ldots)=(x_{\pi(1)},x_{\pi(2)},\ldots)$ and
  note that $h$ is an affine homeomorphism of the Hilbert
  cube which maps $S(X,d_X,\F)$ to $S(X,d_X,G)$.
\end{proof}

Suppose now that $(X,d_X)$ is a metric space, $D\subseteq X$
is a countable dense set and that $F$ consists of distance
functions from the points in $D$. In such a case, we denote
$F$ by $d_XD$. Note that $d_XD$ always consists of Lipschitz
1 functions and its uniform closure contains all distance
functions from the points in $X$. In general, still, $d_XD$
need not be saturated. We will see, however, that if $X$ has
certain additional property (e.g. is the Urysohn space or
the Urysohn sphere), then this is the case.

\begin{definition}
  We say that that a separable metric space $(X,d_X)$ is
  \textit{saturated} if $d_XD$ is saturated for every
  countable dense set $D\subseteq X$.
\end{definition}

\begin{proposition}\label{indep3}
  Suppose $(X,d_X)$ is separable, complete and saturated and
  $D,E\subseteq X$ are two countable dense sets. Then there
  is an affine homeomorphism $\tau_D^E:S(X,d_X,d_XD)\to
  S(X,d_X,d_XE)$ such that the following diagram commutes.
  \begin{center}
    \begin{tikzcd}
      S(X,d_X,d_XD) \arrow{rr}{\tau_D^E} & & S(X,d_X,d_XE)\\
      & X \arrow{ul}{i_{d_XD}}\arrow{ur}[below]{\quad\quad i_{d_XE}} & \\
    \end{tikzcd}
  \end{center}
\end{proposition}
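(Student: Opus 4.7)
The plan is to define $\tau_D^E$ coordinatewise, using that the $n$th coordinate function on $S(X,d_X,d_XE)$ arises from the distance function $d_X(e_n,\cdot)$, which can be uniformly approximated on $X$ by $d_X(d_m,\cdot)$ for $d_m\in D$ close to $e_n$ (by the triangle inequality). Write $D=(d_n:n\in\N)$ and $E=(e_n:n\in\N)$; for each $n,k\in\N$ fix an index $m_n^k$ with $d_X(d_{m_n^k},e_n)<1/k$. For $y\in S(X,d_X,d_XD)\subseteq[0,1]^\N$, set $\tau_D^E(y)_n=\lim_{k\to\infty}y(m_n^k)$.

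The key estimate is that for all $m,m'\in\N$ and all $y\in S(X,d_X,d_XD)$, $|y(m)-y(m')|\leq d_X(d_m,d_{m'})$. On the subset $i_{d_XD}(X)$ this is immediate from the triangle inequality, since $y(m)-y(m')$ evaluated at $i_{d_XD}(x)$ equals $d_X(d_m,x)-d_X(d_{m'},x)$. Because $y\mapsto y(m)-y(m')$ is affine continuous and $S(X,d_X,d_XD)$ is the closed convex hull of $i_{d_XD}(X)$, the bound propagates to all of $S(X,d_X,d_XD)$. Applied to the sequence $(m_n^k)_k$, this forces $k\mapsto y(m_n^k)$ to be Cauchy uniformly in $y$ (with increments bounded by $1/k+1/l$). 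Consequently $\tau_D^E(y)_n$ is well-defined, independent of the choice of $(m_n^k)$, and is a uniform limit in $y$ of continuous affine functions. Hence each coordinate of $\tau_D^E$, and therefore $\tau_D^E:S(X,d_X,d_XD)\to[0,1]^\N$ itself, is continuous and affine.

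For $x\in X$ the $n$th coordinate of $\tau_D^E(i_{d_XD}(x))$ is $\lim_k d_X(d_{m_n^k},x)=d_X(e_n,x)=i_{d_XE}(x)_n$, so $\tau_D^E\circ i_{d_XD}=i_{d_XE}$. Affinity extends this: $\tau_D^E$ sends any finite convex combination $\sum_i\alpha_i i_{d_XD}(x_i)$ with $x_i\in X$ to $\sum_i\alpha_i i_{d_XE}(x_i)\in S(X,d_X,d_XE)$, and since such combinations are dense in $S(X,d_X,d_XD)$ while $S(X,d_X,d_XE)$ is closed, continuity yields $\tau_D^E(S(X,d_X,d_XD))\subseteq S(X,d_X,d_XE)$. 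The reverse inclusion is automatic: the image is closed convex and contains $i_{d_XE}(X)$. Performing the symmetric construction yields $\tau_E^D$, and $\tau_E^D\circ\tau_D^E$ is affine continuous and equals the identity on the dense subset $i_{d_XD}(X)$, hence everywhere; similarly for the other composition. Thus $\tau_D^E$ is an affine homeomorphism making the diagram commute.

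The main obstacle is the key estimate together with its propagation from $i_{d_XD}(X)$ to the entire closed convex hull; once in hand, both well-definedness and continuity of $\tau_D^E$ reduce to the uniform-Cauchy property of the approximating sequences, after which surjectivity and bijectivity fall out by a standard density-plus-affinity argument. Note that saturation of $\F$ enters only through the availability of the embeddings $i_{d_XD}$ and $i_{d_XE}$ given by Lemma \ref{embedding}; the coordinatewise construction of $\tau_D^E$ itself exploits only the Lipschitz-$1$ nature of the distance functions.
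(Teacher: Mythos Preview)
Your argument is correct and is in fact cleaner than the paper's. Both proofs rest on the same underlying observation---that the distance function $z\mapsto d_X(e_n,z)$ is a uniform limit of functions $z\mapsto d_X(d_m,z)$ with $d_m\in D$---but they package it differently. The paper proceeds by a finite-replacement scheme: it first reduces to perfect $X$ and disjoint $D,E$, proves (Claim~\ref{subst}) that dropping a single element of $D$ gives an affinely homeomorphic set via a ``graph of an affine function'' argument, then swaps $d_n$ for $e_n$ one coordinate at a time to get maps $\tau_n:S(X,d_X,d_XD)\to S(X,d_X,d_XD_n)$, and finally passes to the limit. Your construction bypasses all of this by defining $\tau_D^E$ directly, coordinatewise, as a uniform limit of coordinate projections; the estimate $|y(m)-y(m')|\leq d_X(d_m,d_{m'})$, propagated from $i_{d_XD}(X)$ to its closed convex hull, does all the work at once. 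Your route is shorter and needs no preliminary reductions; the paper's route, on the other hand, exhibits $\tau_D^E$ as a limit of honest affine homeomorphisms between intermediate simplices, which fits its running theme of approximate intertwinings.

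One small slip: near the end you call $i_{d_XD}(X)$ a ``dense subset'' of $S(X,d_X,d_XD)$, which it is not---only its convex hull is dense. The argument you intend (and have already used two sentences earlier) is that an affine continuous map agreeing with the identity on $i_{d_XD}(X)$ agrees on all convex combinations, hence on the closure; just phrase it that way. Also, your closing remark slightly undersells the role of saturation: beyond Lemma~\ref{embedding}, you implicitly invoke the identification of $S(X,d_X,d_XD)$ with the closed convex hull of $i_{d_XD}(X)$, which is the content of Lemma~\ref{indep1} and likewise uses that $d_XD$ is saturated.
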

The map $\tau_D^E$ should be treated as the transition map
between the coordinate systems of $D$ and $E$.
\begin{proof}
  First note that isolated points of $X$ must belong to both
  $D$ and $E$ and hence if $X_1$ is the set of isolated
  points, then by Lemma \ref{permut} we can assume that
  $X_1$ is enumerated in the same way in $D$ and $E$. Thus,
  without loss of generality we can assume that $X$ has no
  isolated points. Moreover, we can assume that $D$ and $E$
  are disjoint since we can always use a third countable
  dense set which is disjoint from both $D$ and $E$. Write
  $D=(d_0,d_1,\ldots)$ and let $D'=(d_1,d_2,\ldots)$.
  \begin{claim}\label{subst}
    $S(X,d_X,d_XD)$ and $S(X,d_X,d_XD')$ are affinely
    homeomorphic via a map $\tau_D^{D'}:S(X,d_X,d_XD)\to
    S(X,d_X,d_XD')$ such that $\tau_D^{D'}\circ
    i_{d_XD}=i_{d_XD'}$.
  \end{claim}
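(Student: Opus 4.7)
The plan is to take $\tau_D^{D'}$ to be the natural projection $\pi: S(X,d_X,d_XD)\to S(X,d_X,d_XD')$ that simply forgets the coordinate indexed by $d_0$. Since the corresponding map on the ambient Hilbert cube is linear and continuous, $\pi$ is affine and continuous. For each $x\in X$ one has $\pi(i_{d_XD}(x))=(d_X(x,d_1),d_X(x,d_2),\ldots)=i_{d_XD'}(x)$, which both gives commutativity of the diagram and shows that the image $\pi(S(X,d_X,d_XD))$ is a closed convex set containing $i_{d_XD'}(X)$. By Lemma \ref{indep1} applied on the target side, this image must coincide with $S(X,d_X,d_XD')$, so $\pi$ is surjective. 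The only substantive point is therefore injectivity.

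The key idea for injectivity is that the missing first coordinate is, uniformly on the whole convex set, an arbitrarily good approximation by coordinates that are retained. Since $X$ has no isolated points and $D$ is dense, $D'=D\setminus\{d_0\}$ is still dense in $X$, so we may pick a sequence $(d_{k_n})_{n\in\N}$ with each $k_n\geq 1$ and $d_X(d_{k_n},d_0)\to 0$. Consider the affine continuous functionals $\ell(y)=y_0$ and $\ell_n(y)=y_{k_n}$ on $S(X,d_X,d_XD)$. On the embedded copy $i_{d_XD}(X)$ we have
\[
  |\ell_n(i_{d_XD}(x))-\ell(i_{d_XD}(x))|=|d_X(x,d_{k_n})-d_X(x,d_0)|\leq d_X(d_{k_n},d_0).
\]
Because $X$ is saturated, Corollary \ref{density} tells us that $i_{d_XD}(X)$ is dense in $\ext(S(X,d_X,d_XD))$, so by continuity the same inequality holds on all extreme points. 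Proposition \ref{affproximity} then lifts it to the whole simplex, yielding $\|\ell_n-\ell\|_\infty\leq d_X(d_{k_n},d_0)\to 0$, so $\ell_n\to\ell$ uniformly on $S(X,d_X,d_XD)$.

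Injectivity is immediate from this: if $\pi(y_1)=\pi(y_2)$ then $\ell_n(y_1)=\ell_n(y_2)$ for every $n$ because $k_n\geq 1$, and passing to the limit gives $\ell(y_1)=\ell(y_2)$; combined with the agreement of all other coordinates this forces $y_1=y_2$. Hence $\pi$ is a continuous affine bijection between compact convex sets, therefore an affine homeomorphism, and $\tau_D^{D'}:=\pi$ satisfies the required commutativity. The main obstacle in this argument is verifying the uniform approximation $\ell_n\to\ell$ on the whole simplex rather than merely on $i_{d_XD}(X)$; it is there that both the saturation hypothesis (through Corollary \ref{density}) and Proposition \ref{affproximity} are essential.
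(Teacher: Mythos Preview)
Your proof is correct and follows essentially the same route as the paper: both take $\tau_D^{D'}$ to be the projection that deletes the $d_0$-coordinate, and both establish that this projection is a bijection by showing the deleted coordinate is a uniform limit of retained coordinate functionals along a sequence $d_{k_n}\to d_0$, via Proposition \ref{affproximity}. The only cosmetic difference is that the paper obtains the uniform bound on the finite-dimensional approximants $C_n=S_n(X,d_X,D,d_XD)$ (whose extreme points are exactly the images of $d_1,\ldots,d_n$) and then passes to the dense union, whereas you go straight to $S(X,d_X,d_XD)$ by invoking Corollary \ref{density} to get the estimate on all extreme points; since Proposition \ref{indep3} assumes $(X,d_X)$ is saturated, this is legitimate.
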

  \begin{proof}
    Write $C_n=S_n(X,d_X,D,d_XD)$ and
    $B_n=S_n(X,d_X,D,d_XD')$ for each $n\in\N$. Write also
    $[0,1]^\N$ as $[0,1]^{\{2,3,\ldots\}}\times[0,1]$
    interpreting the second coordinate of the product as the
    first coordinate in the Hilbert cube. Pick a subsequence
    $d_{k_n}\to d_0$ in $D$ and note that the distance
    function to $d_0$ is the uniform limit of the distance
    functions to $d_{k_n}$'s. By Proposition
    \ref{affproximity}, this implies that on $\bigcup_n C_n$
    the first coordinate is the uniform limit of the
    coordinates numbered with $k_n$'s. Thus, $\bigcup_n C_n$
    treated as a subset of
    $[0,1]^{\{2,3,\ldots\}}\times[0,1]$, is a graph of an
    affine function, say $a$, defined on $\bigcup_n
    B_n$. Moreover, since $a$ is a uniform limit of the
    functions given by coordinates in $\{k_1,k_2,\ldots\}$,
    and these functions clearly extend to the closure of
    $\bigcup_n B_n$, the function $a$ also uniquely extends
    to an affine function defined on the closure of
    $\bigcup_n B_n$, which is equal to
    $S(X,d_X,d_XD')$. Note that the graph of the unique
    extension is equal to the closure of $\bigcup_n C_n$,
    i.e.  $S(X,d_X,d_XD)$. Given that, the projection
    function from $[0,1]^{\{2,3,\ldots\}}\times[0,1]$ to
    $[0,1]^{\{2,3,\ldots\}}$ is an affine isomorphism from
    $S(X,d_X,d_XD)$ to $S(X,d_X,d_XD')$. Write $\tau_D^{D'}$
    for this isomorphism and note that since it just erases
    the first coordinate, we have $\tau_D^{D'}\circ
    i_{d_XD}=i_{d_XD'}$.
  \end{proof}
  Write now $D_n$ for the sequence
  $(e_1,e_2,\ldots,e_n,d_{n+1},d_{n+1},\ldots)$.
  \begin{claim}
    For each $n\in\N$, there is an affine homeomorphism
    $\tau_n:S(X,d_X,d_XD)\to S(X,d_X,d_XD_n)$ such that
    $\tau_n\circ i_{d_XD}=i_{d_XD_n}$.
  \end{claim}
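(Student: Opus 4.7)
The proof proceeds by induction on $n$, with the base case $n=0$ being trivial since $D_0=D$ and we can take $\tau_0$ to be the identity. For the inductive step, I would construct an affine homeomorphism $\sigma_n:S(X,d_X,d_XD_n)\to S(X,d_X,d_XD_{n+1})$ satisfying $\sigma_n\circ i_{d_XD_n}=i_{d_XD_{n+1}}$, and then set $\tau_{n+1}=\sigma_n\circ\tau_n$. Since $D_{n+1}$ is obtained from $D_n$ by swapping the entry $d_{n+1}$ for $e_{n+1}$, the heart of the argument is to produce $\sigma_n$.

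The plan is to factor $\sigma_n$ into four steps. First, re-enumerate $D_n$ to put $d_{n+1}$ in position zero, obtaining the sequence $D_n'=(d_{n+1},e_1,\ldots,e_n,d_{n+2},d_{n+3},\ldots)$; by Lemma \ref{permut} this yields an affine homeomorphism $S(X,d_X,d_XD_n)\to S(X,d_X,d_XD_n')$ which, by the explicit construction of the homeomorphism there (just a permutation of coordinates of $[0,1]^\N$), commutes with the embeddings of $X$. Second, apply Claim \ref{subst} to drop $d_{n+1}$, getting an affine homeomorphism onto $S(X,d_X,d_X\widetilde D_n)$ for $\widetilde D_n=(e_1,\ldots,e_n,d_{n+2},d_{n+3},\ldots)$; this is legitimate because $X$ has no isolated points (by the reduction at the start of the proof), hence $d_{n+1}$ is a limit of points in $\widetilde D_n$. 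Third, apply Claim \ref{subst} in the reverse direction to insert $e_{n+1}$ at the front, producing the sequence $(e_{n+1},e_1,\ldots,e_n,d_{n+2},\ldots)$; this is legal since $E$ and $D$ are disjoint (so $e_{n+1}$ is genuinely new) and again $e_{n+1}$ is a limit point of $\widetilde D_n$. Fourth, permute once more via Lemma \ref{permut} to obtain the standard enumeration of $D_{n+1}$.

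The key verification at each step is that the affine homeomorphism commutes with the canonical embeddings of $X$. For the permutation steps this is transparent. For Claim \ref{subst}, this was proved explicitly: the map is the projection erasing the redundant coordinate, and an element $x\in X$ in $i_{d_XD_n'}$ has the same non-erased coordinates as in $i_{d_X\widetilde D_n}$. The inverse direction (used in the third step) is just the inclusion of the graph of the affine function $a$, which on points of $X$ again preserves the other coordinates. The main subtlety is making sure the explicit coordinate-projection form of the homeomorphism in Claim \ref{subst} genuinely intertwines $i_{d_XD_n'}$ with $i_{d_X\widetilde D_n}$, but this is built into the definition of $i_\F$ given in Lemma \ref{embedding}. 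Composing the four steps and then composing with $\tau_n$ produces $\tau_{n+1}$ with the required compatibility, completing the induction.
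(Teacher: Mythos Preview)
Your proposal is correct and follows essentially the same approach as the paper. The paper also proceeds by induction, reducing to the construction of a map $\tau_n':S(X,d_X,d_XD_n)\to S(X,d_X,d_XD_{n+1})$ compatible with the embeddings, and then observes that both $S(X,d_X,d_XD_n)$ and $S(X,d_X,d_XD_{n+1})$ are affinely homeomorphic (via Claim~\ref{subst}) to the set obtained by deleting the single coordinate in which $D_n$ and $D_{n+1}$ differ; the paper simply writes ``without loss of generality $n=0$'' where you explicitly invoke Lemma~\ref{permut} to move the differing coordinate to the front, but the substance is identical.
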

  \begin{proof}
    It is enough to show that there is an affine
    homeomorphism $\tau_n':S(X,d_X,d_XD_n)\to
    S(X,d_X,d_XD_{n+1})$ with $\tau_n'\circ
    i_{D_n}=i_{D_{n+1}}$ and without loss of generality
    assume that $n=0$. But this follows from Claim
    \ref{subst} since both $S(X,d_X,d_XD)$ and
    $S(X,d_X,d_XD_1)$ are affinely homeomorphic to
    $S(X,d_X,d_XD')$ via maps which make the appropriate
    diagrams commute.
  \end{proof}

  Now note that since $\tau_{n+1}\tau_n^{-1}$ changes only
  the $n$-th coordinate, the sequence $\tau_n$ is uniformly
  Cauchy with respect to the Hilbert cube metric, and thus
  converges to an affine map
  $\tau_D^E:S(X,d_X,d_XD)\to[0,1]^\N$. Moreover, since the
  distance of $\rng\tau_n$ to $B_n$ is smaller than
  $2^{-n}$, we have that $\tau_D^E$ is an affine map from
  $S(X,d_X,d_XD)$ to $S(X,d_X,d_XE)$. In the same way
  construct maps $\tau^n:S(X,d_X,d_XE)\to S(X,d_X,d_XE_n)$
  with $E_n=(d_1,d_2,\ldots,d_n,e_{n+1},e_{n+2},\ldots)$ and
  note that their limit is an affine function
  $\tau_E^D:S(X,d_X,d_XE)\to S(X,d_X,d_XD)$. The maps are
  clearly inverse to each other, so $\tau_D^E$ is an affine
  homeomorphism. Finally, $\tau_D^E\circ i_{d_XD}=i_{d_XE}$
  follows from the fact that $\tau_n\circ
  i_{d_XD}=i_{d_XD_n}$ holds for each $n$ and $i_{d_XD_n}\to
  i_{d_XE}$. This ends the proof.
\end{proof}

The following proposition is stated for the Urysohn sphere
$\U_1$ but it also holds true (with the same proof) for the
Urysohn space $\U$. We say that a function
$f:X\to[0,\infty]$ defined on a metric space $(X,d_X)$ is a
\textit{Kat\v etov function} if for every $x,y\in X$ we have
$|f(x)-f(y)|\leq d_X(x,y)\leq f(x)+f(y)$ (cf. \cite[Lemma
5.1.22]{pestov} and \cite[Definition 1.2.1]{gao}).

\begin{proposition}\label{usaturation}
  The Urysohn sphere $\U_1$ is saturated.
\end{proposition}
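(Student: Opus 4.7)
The plan is to verify the second condition of saturation (the first being automatic for the family $d_{\U_1} D$ when $D$ is dense in $\U_1$): given distinct $x_1, \ldots, x_n \in \U_1$ and a countable dense $D \subseteq \U_1$, I need to produce $d_1, \ldots, d_n \in D$ such that the $n \times n$ matrix $M_{ij} = d_{\U_1}(x_i, d_j)$ is nonsingular. The $j$-th column of $M$ is exactly the vector $(d_{\U_1}(x_1, d_j), \ldots, d_{\U_1}(x_n, d_j))$, which lies in the set $K \subseteq [0,1]^n$ of vectors $(a_1, \ldots, a_n)$ satisfying the Kat\v etov inequalities $|a_i - a_{i'}| \leq d_{\U_1}(x_i, x_{i'}) \leq a_i + a_{i'}$ for all $i, i'$. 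Conversely, the finite extension property of $\U_1$ realizes every element of $K$ as the distance vector from some $y \in \U_1$ to $\{x_1, \ldots, x_n\}$, and density of $D$ then allows one to approximate $y$ by an element of $D$ with controlled error in each coordinate.

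The main step is to show that $K$ contains $n$ linearly independent vectors. Consider the constant vector $c = (1-\varepsilon, \ldots, 1-\varepsilon)$ for small $\varepsilon > 0$. All defining inequalities of $K$ hold strictly at $c$: $|c_i - c_{i'}| = 0 < d_{\U_1}(x_i, x_{i'})$ by distinctness of the $x_i$'s, $d_{\U_1}(x_i, x_{i'}) \leq 1 < 2(1-\varepsilon) = c_i + c_{i'}$ for $\varepsilon < 1/2$, and $0 < 1-\varepsilon < 1$. Hence $c$ lies in the topological interior of $K$ viewed as a subset of $\R^n$. Setting $v^{(k)} = c + \delta e_k$ for $0 < \delta < \varepsilon$ (with $e_k$ the $k$-th standard basis vector), each $v^{(k)}$ still lies in $K$, and the matrix with columns $v^{(1)}, \ldots, v^{(n)}$ equals $(1-\varepsilon) J + \delta I$, where $J$ is the all-ones matrix. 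Its eigenvalues are $n(1-\varepsilon) + \delta$ (once) and $\delta$ (with multiplicity $n-1$), all nonzero, so the $v^{(k)}$ are linearly independent.

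To conclude, I use the Urysohn extension property: each $v^{(k)} \in K$ is realized as the vector of distances to $\{x_1, \ldots, x_n\}$ from some $y_k \in \U_1$. Choosing $d_k \in D$ within distance $\eta$ of $y_k$ gives $|d_{\U_1}(d_k, x_i) - v^{(k)}_i| < \eta$ for each $i$ by the triangle inequality, so the $k$-th column of $M$ is within $\eta$ of $v^{(k)}$. Since nonvanishing of the determinant is an open condition, for $\eta > 0$ sufficiently small the matrix $M$ remains nonsingular, completing the verification. The only substantive point is the elementary convex-geometric claim that $K$ has nonempty interior in $\R^n$; everything else is routine given the finite extension property of $\U_1$.
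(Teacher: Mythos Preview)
Your proof is correct. The minor quibble that ``$0<\delta<\varepsilon$'' only guarantees the coordinate bound $v^{(k)}_k\le 1$ and not the Kat\v etov inequalities is harmless, since you have already established that $c$ lies in the topological interior of $K$, so any sufficiently small $\delta$ works.

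Your route differs from the paper's. The paper proceeds by induction on $n$: assuming $d_1,\ldots,d_{n-1}$ already make the top-left $(n{-}1)\times(n{-}1)$ minor nonsingular, it chooses $d\in D$ with $d(x_n,d)<d(x_i,d)$ for $i<n$, observes that the determinant of the $n\times n$ matrix is an affine function of a perturbation $\varepsilon$ added to the $(n,n)$ entry with nonzero linear coefficient (the inductive minor), picks a small $\varepsilon_0$ where it does not vanish, and then realizes the resulting Kat\v etov function by a point of $\U_1$ which is finally approximated in $D$. You instead bypass the induction entirely by exhibiting an explicit interior point $c=(1-\varepsilon,\ldots,1-\varepsilon)$ of the Kat\v etov polytope $K$ and perturbing in each coordinate direction to get $n$ linearly independent Kat\v etov vectors simultaneously; the realization and approximation steps are then the same. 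Your argument is shorter and makes transparent the underlying reason the construction succeeds (namely that $K$ has nonempty interior in $\R^n$ once the $x_i$ are distinct), while the paper's inductive argument is more constructive in that it builds the $d_j$ one at a time.
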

\begin{proof}
  Write $d$ for the metric $d_{\U_1}$ on $\U_1$. Pick a
  countable dense set $D\subseteq \U_1$ and let
  $x_1,\ldots,x_n\in\U_1$. We show that there are
  $d_1,\ldots,d_n\in D$ such that the matrix
  \[ \left( \begin{array}{ccc}
      d(x_1,d_1) & \ldots & d(x_1,d_n) \\
      \vdots & \ddots & \vdots  \\
      d(x_n,d_1) & \ldots & d(x_n,d_n)
    \end{array} \right)\]
  is invertible. The proof is by induction. For $n=1$, any
  $d_1\not=x_1$ will do. Suppose $n>1$ and
  $x_1,\ldots,x_n\in\U$ are given. Pick any
  $d_1,\ldots,d_{n-1}$ that witness the inductive assumption
  for $x_1,\ldots,x_{n-1}$ and let $d\in D$ be such that
  $d(x_n,d)<d(x_i,d)$ for all $i\not=n$.. Consider
  the function 
  \[ \varepsilon\mapsto\det\left( \begin{array}{cccc}
      d(x_1,d_1) & \ldots & d(x_1,d_{n-1}) & d(x_1,d) \\
      \vdots & \ddots & \vdots & \vdots \\
      d(x_n,d_1) & \ldots & d(x_n,d_{n-1}) & d(x_n,d)+\varepsilon
    \end{array} \right)\]
  and note that it is a nonzero linear function, so there are
  arbitrarily small
  $\varepsilon>0$ at which it does not vanish. Pick such
  $\varepsilon_0>0$ which is smaller than
  $\min\{d(x_i,d)-d(x_n,d): i< n\}$. Note now that the
  function $f:\{x_1,\ldots,x_n\}\to[0,1]$ given by
  $f(x_i)=d(x_i,d)$ if $i<n$ and
  $f(x_n)=d(x_n,d)+\varepsilon_0$ is a Kat\v etov
  function with values in $[0,1]$. Thus, there is $y\in\U_1$ which realizes
  $f$. This means that 
  \[ \det\left( \begin{array}{cccc}
      d(x_1,d_1) & \ldots & d(x_1,d_{n-1}) & d(x_1,y) \\
      \vdots & \ddots & \vdots & \vdots  \\
      d(x_n,d_1) & \ldots & d(x_n,d_{n-1}) & d(x_n,y)
    \end{array} \right)>0\]
  Since the set of such $y$'s is clearly open, we can find
  one, say $d_n$, in $D$.
\end{proof}

\medskip

The Urysohn space has the extension property saying that
every Kat\v etov function defined on its finite subset is
realized as a distance function to some point in the
space. Huhunai\v svili \cite{huhu} (cf also
\cite{gromov,joiner,bogatyi,melleray1,melleray2}) showed
that the same is true for Kat\v etov functions defined on
compact subsets of the Urysohn space. This is probably the
strongest result that can guarantee that certain Kat\v etov
functions can be realized in the Urysohn space (or the
Urysohn sphere). The following construction is motivated by
the need of realizing Kat\v etov functions defined on
non-compact subspaces of the Urysohn sphere.

\medskip

Let $(X,d_X)$ be a separable metric space of diameter
bounded by 1 and let $D\subseteq X$ be its dense countable
subset. Let $R(D)$ be the ring of functions generated by the
distance functions $x\mapsto d_X(x,d)$ for $d\in D$ and all
rational constant functions. Note that $R(D)$ is countable
and all functions in $R(D)$ are Lipschitz. Let $R_1(D)$ the
the family of functions in $R(D)$ which are Lipschitz 1 and
have the range contained in $[\frac{1}{2},1]$. Note that
dividing a bounded Lipschitz function by an appropriately
large constant, we get a Lipschitz 1 function with the range
contained in $[-\frac{1}{4},\frac{1}{4}]$. Next, adding
$\frac{3}{4}$ we get a Lipschitz 1 function with
$\rng(f)\subseteq[\frac{1}{2},1]$. This shows that $R_1(D)$
is linearly dense in $R(D)$. On the other hand, since $d_X$
is bounded by 1, every function in $R_1(D)$ is a Kat\v etov
function. Recall that the elements of $X$ are identified
with Kat\v etov functions on $X$ by the Kuratowski
construction \cite[Chapter 1.2]{gao}, i.e. $x\in X$ is
identified with the function $y\mapsto d_X(x,y)$. Write
$F(X)$ for the family of all finitely supported (see
\cite[Definition 1.2.2]{gao}) Kat\v etov functions on $X$
with values in $[0,1]$. Let $E(X,d_X,D)$ be the completion
of the space $F(X)\cup R_1(D)$ with the sup metric. Note
that $E(X,d_X,D)$ is an extension of $X$ (as $F(X)$ contains
all functions $y\mapsto d_X(x,y)$ for $x\in X$), is
separable and realizes all finitely supported Kat\v etov
functions on $X$. However, it is slightly bigger than the
usual one-step Kat\v etov extension since we also have
realized the functions in $R_1(D)$. A standard argument
shows that if $\varphi:X\to X$ is an isometry, then
$\varphi$ extends to an isometry $\varphi'$ of $E(X,d_X,D)$
and the definition of $E(X,d_X,D)$ does not depend on the
choice of the dense set $D$. Thus, slightly abusing
notation, we write $E(X,d_X)$ for $E(X,d_X,D)$. Note,
however, that given $D$, we have a canonical countable dense
set $D'$ in $E(X,d_X,D)$ consisting of $R_1(D)$ and all
Kat\v etov functions finitely supported on a subset of $D$
and assuming rational values on their support.

Now, similarly as in the Kat\v etov construction of the
Urysohn space, we iterate the above extension construction
infinitely many times.

\begin{definition}
  Given a separable metric space $(X,d_X)$ and its countable
  dense subset $D\subseteq X$ define inductively
  $E^n(X,d_X,D)$ and $D^n(X,d_X,D)\subseteq E^n(X,d_X,D)$ as
  follows. $E^0(X,d_X,D)=(X,d_X)$ and
  $D_0(X,d_X,D)=D$. Given $E^n(X,d_X,D)$ and $D^n(X,d_X,D)$,
  which is a countable dense subset of $E^n(X,d_X,D)$ let
  $E^{n+1}(X,d_X,D)=E(E^n(X,d_X,D),D^n(X,d_X,D))$ and let
  $D^{n+1}(X,d_X,D)$ be the canonical extension of
  $D^n(X,d_X,D)$ to a countable dense subset of
  $E^{n+1}(X,d_X,D)$. Write $E^\infty(X,d_X,D)$ for the
  completion of the space $\bigcup E^n(X,d_X,D)$ and
  $D^\infty(X,d_X,D)$ for $\bigcup_n D^n(X,d_X,D)$.
\end{definition}

Again, the construction of $E^\infty(X,d_X,D)$ does not
depend on the initial choice of the countable dense set $D$
and we will abuse notation writing $E^\infty(X,d_X)$ for
$E^\infty(X,d_X,D)$, unless this can cause confusion. Note
that the space $E^\infty(X,d_X)$ is actually isometric to
the Urysohn sphere since it realizes all finitely supported
Kat\v etov functions with values in $[0,1]$. Note also that
if $\varphi:X\to X$ is an isometry, then $\varphi$ extends
canonically to an isometry
$\varphi^\infty:E^\infty(X,d_X)\to E^\infty(X,d_X)$. This
follows in the same way as the analogous extension property
is proved for the Kat\v etov extensions \cite[Page
115]{pestov} from the (above mentioned) fact that $\varphi$
extends to $E(X,d_X)$.

Given a metric space $(X,d_X)$ and its countable dense
subset $D$, write $d_{E^\infty(X,d_X)}D^\infty(X,d_X)$ for
the family of functions on $X$ of the form $x\mapsto
d_{E^\infty(X,d_X)}(x,d)$ for $d\in D^\infty(X,d_X,D)$.
Write also $\ring^\infty(X,d_X,D)$ for the ring of functions
on $X$ generated by
$d_{E^\infty(X,d_X)}D^\infty(X,d_X)$. The next proposition
follows rather immediately from the construction.

\begin{proposition}\label{huhu}
  Let $(X,d_X)$ be a separable metric space of diameter
  bounded by 1 and let $D\subseteq X$ be a countable dense
  set.  Then $\ring^\infty(X,d_X,D)$ is contained in the
  linear span of $d_{E^\infty(X,d_X)}D^\infty(X,d_X)$.
\end{proposition}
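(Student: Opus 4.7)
The plan is to exploit the iterated structure of $E^\infty(X,d_X)$ and to observe that the products intrinsic to $\ring^\infty$ can be collapsed to a linear combination by passing one level up in the Kat\v etov iteration. Abbreviate $E^n:=E^n(X,d_X,D)$ and $D^n:=D^n(X,d_X,D)$. Let $g\in\ring^\infty(X,d_X,D)$; by definition $g$ is a polynomial expression (over $\R$) in finitely many distance functions $x\mapsto d_{E^\infty(X,d_X)}(x,d_i)$ with $d_1,\dots,d_k\in D^\infty(X,d_X,D)$. Choose $n$ large enough that $d_1,\dots,d_k\in D^n$, and let $G:E^n\to\R$ denote the same polynomial expression in the distance functions $y\mapsto d_{E^n}(y,d_i)$. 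Then $G\in R(D^n)$ and $G\restriction X=g$.

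The crucial input is the linear density observation already used in the excerpt just before the definition of $E(X,d_X,D)$: every element of $R(D^n)$ is a bounded Lipschitz function on $E^n$, and dividing by a sufficiently large positive constant and then adding $3/4$ produces an element of $R_1(D^n)$. Therefore $R_1(D^n)$ $\R$-linearly spans $R(D^n)$, and we may write
\[
G \;=\; \sum_{i=1}^{m} c_i\, h_i, \qquad h_i\in R_1(D^n),\ c_i\in\R.
\]

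Each $h_i$ is Lipschitz 1 with range in $[\tfrac12,1]$, and $E^n$ has diameter at most $1$ (an easy induction on $n$, since a one-step Kat\v etov extension by functions bounded by $1$ preserves the diameter bound); hence $h_i$ is a Kat\v etov function on $E^n$. By the construction of $E^{n+1}=E(E^n,D^n)$ as the completion of $F(E^n)\cup R_1(D^n)$ under the sup metric, $h_i$ appears as a point $p_i$ of $E^{n+1}$, and moreover $p_i\in D^{n+1}\subseteq D^\infty(X,d_X,D)$. The identification of Kat\v etov functions with points gives $d_{E^{n+1}}(p_i,y)=h_i(y)$ for every $y\in E^n$, and consequently $d_{E^\infty(X,d_X)}(p_i,y)=h_i(y)$. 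Restricting to $x\in X$,
\[
g(x) \;=\; G(x) \;=\; \sum_{i=1}^{m} c_i\, d_{E^\infty(X,d_X)}(x,p_i),
\]
which exhibits $g$ as an $\R$-linear combination of elements of $d_{E^\infty(X,d_X)}D^\infty(X,d_X)$.

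The only real obstacle is careful bookkeeping: one must track that elements of $R_1(D^n)$ become distance functions at the next level $E^{n+1}$ (not at $E^n$ itself), and verify that the diameter bound $1$ is preserved through all levels so that the Kat\v etov condition is nonvacuous. Both points are immediate from the definitions, so no genuine difficulty arises beyond unwinding the iterated construction.
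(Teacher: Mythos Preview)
Your proof is correct and follows essentially the same route as the paper's: pick $n$ so that the finitely many generators lie in $D^n$, use that $R_1(D^n)$ linearly spans $R(D^n)$, and then observe that each $h_i\in R_1(D^n)$ becomes a point of $D^{n+1}$ whose distance function on $E^n$ is $h_i$ itself. The paper's proof is simply a terser version of the same argument, omitting the explicit verification (which you supply) that $d_{E^{n+1}}(p_i,\cdot)\restriction E^n=h_i$ and that the diameter bound persists.
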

\begin{proof}
  Write $D^\infty$ for $D^\infty(X,d_X,d)$ and $dD^\infty$
  for $d_{E^\infty(X,d_X)}D^\infty(X,d_X)$. Every function
  $f$ in $\ring^\infty(X,d_X,D)$ is a polynomial in finitely
  many functions in $dD^\infty$, which in turn are the
  distance functions to finitely many points in
  $D^\infty$. Thus, there is $n\in\N$ such that $f$ belongs
  to the ring generated by the distance functions to
  $D^n(X,d_X,D)$. Then, $f$ belongs to the linear span of
  distance functions to the points in $D^{n+1}(X,d_X,D)$.
\end{proof}

Now, fix a countable dense set $D\subseteq\U_1$ and write
$D^\infty$ for $D^\infty(\U_1,d_{\U_1},D)$ and $d_E$ for the
metric on $E^\infty(\U_1,d_{\U_1})$. Consider the convex
compact set $S(E^\infty(\U_1,d_{\U_1}),d_E
D^\infty(\U_1,d_{\U_1},D))$ and write $S(\U_1,d_{\U_1})$ for
the closed convex hull of $\U_1$ in
$S(E^\infty(\U_1,d_{\U_1}),d_E
D^\infty(\U_1,d_{\U_1},D))$. Note that $S(\U_1,d_{\U_1})$ is
equal to $S(\U_1,d_{U_1},d_ED^\infty)$.  Note also that by
Proposition \ref{usaturation} and the fact that
$E^\infty(\U_1,d_{\U_1})$ is isometric to the Urysohn
sphere, the family $d_ED^\infty$ is saturated.

Recall (Proposition \ref{zsets}) that given a subspace
$(X,d_X)$ of $\U_1$ we write $Z(X,d_X)$ for an isometric
copy of $(X,d_X)$ appropriately embedded into $\U_1$.

\begin{definition}
  For a subspace $(X,d_X)$ of the Urysohn sphere
  $(\U_1,d_{\U_1})$, write $S_Z(X,d_X)$ for
  $S(Z(X,d_X),d_ED^\infty)$.
\end{definition}

Note that by Proposition \ref{indep3} and the remarks
proceeding Proposition \ref{huhu}, up to affine
homeomorphism, the above definition does not depend on the
choice of the dense countable set $D\subseteq\U_1$.

Similarly as with Proposition \ref{usaturation}, the
following result is stated for the Urysohn sphere but holds
true for the Urysohn space as well.

\begin{proposition}\label{usimplex}
  The convex compact set $S(\U_1,d_{\U_1})$ is a simplex.
\end{proposition}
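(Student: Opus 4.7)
The plan is to identify $S(\U_1, d_{\U_1})$ with a Bauer simplex, following the strategy of Proposition~\ref{bauer} but with the compact set $K := \overline{i_\F(\U_1)} \subseteq S(\U_1, d_{\U_1})$ (where $\F = d_E D^\infty$) in the role of the compact base space. Concretely, I will show that the barycenter map $b : P(K) \to S(\U_1, d_{\U_1})$ is an affine homeomorphism; since $P(K)$ is a Bauer simplex, this gives the conclusion.

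The crux is to prove that $\F$, viewed as continuous functions on $K$ via coordinate projections $\tilde f_i$, is linearly dense in $C(K)$. The family $\{\tilde f_i\}$ separates points of $K$ (as coordinate projections on $[0,1]^\N$ always do) and has no common zero on $K$: a point of $K$ with all coordinates zero would be the limit of a sequence $y_n \in \U_1$ with $d_E(y_n, d_i) \to 0$ for every $i$, forcing all the distinct $d_i$ to coincide with a single limit, which is impossible. By Stone--Weierstrass, the algebra $\tilde R$ generated by $\{\tilde f_i\}$ is dense in $C(K)$. Now any polynomial in the $\tilde f_i$ restricts on the dense subset $i_\F(\U_1) \subseteq K$ to the corresponding polynomial in the $f_i$; this polynomial belongs to $\ring^\infty(\U_1, d_{\U_1}, D)$ and, by Proposition~\ref{huhu}, is a linear combination of functions in $\F$. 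Since two continuous functions on $K$ agreeing on a dense subset coincide, every element of $\tilde R$ is itself a linear combination of the $\tilde f_i$. Hence the linear span of $\{\tilde f_i\}$ is dense in $C(K)$.

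Given linear density, the rest mirrors Proposition~\ref{bauer}. For distinct $\mu, \nu \in P(K)$ one has $\int g\,d\mu \neq \int g\,d\nu$ for some $g \in C(K)$, and by linear density one may take $g = \tilde f_i$; but $\int \tilde f_i\,d\mu$ is exactly the $i$-th coordinate of the barycenter of $\mu$, so $b(\mu) \neq b(\nu)$ and $b$ is injective. Surjectivity of $b$ is automatic from $S(\U_1, d_{\U_1}) = \overline{\conv}(K)$, as every point in the closed convex hull of a compact metrizable set is the barycenter of a Borel probability measure on that set. A continuous affine bijection between compact Hausdorff spaces is an affine homeomorphism, so $S(\U_1, d_{\U_1}) \simeq P(K)$ is a Bauer simplex, and in particular a Choquet simplex. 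The main subtlety is that distance functions on a generic metric space are not linearly dense in $C$ of the space; the entire machinery of $E^\infty$ and $D^\infty$ is engineered precisely so that Proposition~\ref{huhu} ``absorbs'' products of distance functions into linear combinations, upgrading Stone--Weierstrass density of the algebra to density of its mere linear span.
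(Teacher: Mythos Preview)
Your proof is correct and follows essentially the same route as the paper's: both hinge on Stone--Weierstrass applied to the coordinate functions together with Proposition~\ref{huhu} to reduce a polynomial in these functions to a linear combination, after which distinct boundary measures are separated by a single coordinate and hence have distinct barycenters. The paper argues directly with measures supported on $\ext(S)$ and approximates them by atomic measures on $\U_1$ before passing to the limit; your framing via $K=\overline{i_\F(\U_1)}$ and the barycenter map $b:P(K)\to S$ is a tidier packaging of the same idea and, as a bonus, shows that $S(\U_1,d_{\U_1})$ is actually a \emph{Bauer} simplex with $\ext(S)=K$, a point the paper does not make explicit.

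One small imprecision worth noting: a polynomial in the $f_i$ with arbitrary real coefficients need not literally lie in the ring $R^\infty(\U_1,d_{\U_1},D)$ as defined (which is generated over the rationals), but since each \emph{monomial} $f_{i_1}\cdots f_{i_m}$ does lie in $R^\infty$, Proposition~\ref{huhu} applies monomial by monomial and your conclusion that the full polynomial is a real linear combination of the $f_i$ is unaffected.
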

\begin{proof}
  Write $S$ for $S(\U_1,d_{\U_1})$ and $d_S$ for the metric
  on $S$ (induced from the Hilbert cube). Recall that we
  have fixed a countable dense set $D\subseteq \U_1$ and
  write $d_E$ for the metric on $E^\infty(\U_1,d_{\U_1},D)$.
 
  Suppose $\mu$ and $\nu$ are two distinct Borel probability
  measures in $P(S)$ which are supported on $\ext(S)$. Since
  as $\U_1\subseteq \ext(S)$ is dense by Corollary
  \ref{density}, we can pick two sequences of Borel
  probability measures $\mu_n\to\mu$ and $\nu_n\to\nu$ such
  that $\mu_n$ and $\nu_n$ are finitely supported on $\U_1$.

  Since $\mu$ and $\nu$ are distinct as elements of $P(S)$,
  and the coordinate functions (from $[0,1]^\N$) separate
  points in $S$, by the Stone--Weierstrass theorem, there is
  a function $f\in C(S)$ which belongs to the ring generated
  by the coordinate functions and is such that $\int
  fd\mu\not=\int fd\nu$. Note that the restrictions of the
  coordinate functions to $\U_1$ are equal to the
  $d_E$-distance functions to the points in $D^\infty$. By
  Proposition \ref{huhu} and the fact that $\U_1$ is dense
  in $\ext(S)$, we can assume that the restriction of $f$ to
  $\U_1$ is actually equal to the $d_E$-distance function
  to, say, $z\in D^\infty$. Thus, we assume that the
  function $f$ on $\ext(S)$ is just one of the coordinate
  functions. Say it is the $k$-th coordinate, i.e. $z$ is
  the $k$-th element of $D^\infty$.

  Write $x$ for the barycenter of $\mu$ and $y$ for the
  barycenter of $\nu$ and let $x_n$ be the barycenter of
  $\mu_n$ and $y_n$ of $\nu_n$. Note that $x_n\to x$ and
  $y_n\to y$ \cite[Chapter 15, Proposition
  2.2]{handbook}. Now, since $\mu_n$ and $\nu_n$ are
  supported on $\U_1$, similarly as in Propoxition
  \ref{bauer}, we get that $x_n(k)=\int fd\mu_n$ and
  $y_n(k)=\int fd\nu_n$.  But $\int fd\mu_n\to\int fd\mu$
  and $\int fd\nu_n\to\int fd\nu$. Thus, $x(k)=\int fd\mu$
  and $y(k)=\int fd\nu$ and hence $x$ and $y$ are distinct,
  as needed. This ends the proof.
\end{proof}

\begin{corollary}\label{simplex}
  For every closed subspace $(X,d_X)$ of $\U_1$ the set
  $S_Z(X,d_X)$ is a simplex and $Z(X,d_X)$ is a dense subset
  of $\ext(S_Z(X,d_X))$.
\end{corollary}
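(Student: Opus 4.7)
The proof splits into showing density of $Z(X,d_X)$ in $\ext(S_Z(X,d_X))$ and establishing the simplex property. The plan for the first part is to verify that the family $d_E D^\infty$, restricted to $Z(X,d_X)$, is saturated in the sense of Section~\ref{sec:sextensions}, and then to invoke Lemma~\ref{extreme} and Corollary~\ref{density}. For the second, I would replay the proof of Proposition~\ref{usimplex} essentially verbatim with $Z(X,d_X)$ playing the role of $\U_1$.

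First I would verify saturation. The first condition---that every function $z\mapsto d_E(z,x)$ for $x\in Z(X,d_X)$ is a uniform limit of functions in $d_E D^\infty$---is immediate, since $Z(X,d_X)\subseteq\U_1\subseteq E^\infty(\U_1)$ isometrically and $D^\infty$ is dense in $E^\infty(\U_1)$. For the second condition, given $x_1,\dots,x_n\in Z(X,d_X)\subseteq\U_1$, Proposition~\ref{usaturation} directly supplies $d_1,\dots,d_n\in D\subseteq D^\infty$ for which the matrix $(d_{\U_1}(x_i,d_j))_{i,j}=(d_E(x_i,d_j))_{i,j}$ is invertible, and the functions $f_j=d_E(\cdot,d_j)$ lie in $d_E D^\infty$. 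With saturation in place, Lemma~\ref{extreme} gives $Z(X,d_X)\subseteq\ext(S_Z(X,d_X))$ and Corollary~\ref{density} gives density of $Z(X,d_X)$ in $\ext(S_Z(X,d_X))$.

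For the simplex property, suppose $\mu\neq\nu$ are Borel probability measures on $S_Z(X,d_X)$ concentrated on $\ext(S_Z(X,d_X))$. By the density just established, approximate them weak$^*$ by measures $\mu_n,\nu_n$ with finite supports contained in $Z(X,d_X)$. Since the coordinate projections separate points on $S_Z(X,d_X)\subseteq[0,1]^\N$, the Stone--Weierstrass theorem yields $f\in C(S_Z(X,d_X))$ in the ring generated by them with $\int f\,d\mu\neq\int f\,d\nu$. Restricted to $Z(X,d_X)\subseteq\U_1$, the coordinate projections coincide with the functions in $d_E D^\infty$, so $f|_{Z(X,d_X)}$ is the restriction of an element of $\ring^\infty(\U_1,d_{\U_1},D)$; by Proposition~\ref{huhu} this element lies in the linear span of $d_E D^\infty$, so $f$ agrees on $Z(X,d_X)$ (hence by continuity and density of $Z(X,d_X)$ in $\ext(S_Z(X,d_X))$ also on $\ext(S_Z(X,d_X))$) with a finite linear combination of coordinate projections. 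By linearity I may pass to a single summand, say the $k$-th coordinate, which still separates $\int\cdot\,d\mu$ from $\int\cdot\,d\nu$. Letting $x,x_n,y,y_n$ denote the barycenters of $\mu,\mu_n,\nu,\nu_n$ respectively, weak$^*$ continuity of the barycenter map gives $x_n\to x$ and $y_n\to y$; since $\mu_n,\nu_n$ are supported on $Z(X,d_X)$, the computation in Proposition~\ref{bauer} yields $x_n(k)=\int f\,d\mu_n$ and $y_n(k)=\int f\,d\nu_n$, and passing to the limit gives $x(k)\neq y(k)$. Thus $\mu$ and $\nu$ have distinct barycenters, so $S_Z(X,d_X)$ is a simplex.

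The only subtlety is the transfer of Proposition~\ref{huhu} from the ambient Urysohn sphere to the subspace $Z(X,d_X)$; this is purely formal, since ring elements are polynomials in finitely many distance functions to points in $D^\infty$, and the linear span containment proven on all of $\U_1$ restricts to any subset. Everything else is routine bookkeeping paralleling the arguments of Propositions~\ref{bauer} and~\ref{usimplex}.
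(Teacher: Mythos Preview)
Your proof is correct, but the paper takes a shorter route for the simplex property. Rather than re-running the argument of Proposition~\ref{usimplex}, the paper observes that $S_Z(X,d_X)$ is the closed convex hull of $Z(X,d_X)\subseteq\ext(S(\U_1,d_{\U_1}))$ inside the ambient simplex $S(\U_1,d_{\U_1})$, hence is a \emph{face} of $S(\U_1,d_{\U_1})$; since every closed face of a Choquet simplex is itself a simplex (\cite[Chapter~15, Corollary~3.3]{handbook}), the conclusion follows immediately from Proposition~\ref{usimplex}. Your approach has the virtue of being self-contained---it does not invoke the face-of-a-simplex result, and indeed makes no use of the ambient simplex $S(\U_1,d_{\U_1})$ at all once saturation is established---at the cost of repeating the Stone--Weierstrass\,/\,barycenter computation. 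For the density of $Z(X,d_X)$ in $\ext(S_Z(X,d_X))$, both proofs invoke Corollary~\ref{density}; you are a bit more careful than the paper in explicitly verifying that the restricted family $d_E D^\infty$ is saturated on $Z(X,d_X)$, which is indeed required. One minor simplification to your argument: once you have reduced to a single coordinate function $p_k$, the approximation by $\mu_n,\nu_n$ is no longer needed, since $p_k$ is affine continuous on all of $S_Z(X,d_X)$ and hence $\int p_k\,d\mu=p_k(x)$ directly.
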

\begin{proof}
  For simplicity identify $(X,d_X)$ with $Z(X,d_X)$. Note
  that $S_Z(X,d_X)$ is equal to the closed convex hull of
  $X$ in $S(\U_1,d_{\U_1})$ and $X$ is contained in
  $\ext(S(\U_1,d_{\U_1}))$. Thus, $S_Z(X,d_X)$ is a face of
  $S(\U_1,d_{\U_1})$. By Proposition \ref{usimplex}, this
  implies \cite[Chapter 15, Corollary 3.3]{handbook} that
  $S_Z(X,d_X)$ is a simplex.  Clearly, $X$ is contained in
  $\ext(S_Z(X,d_X))$. The fact that $X$ is dense in
  $\ext(S_Z(X,d_X))$ follows directly from Corollary
  \ref{density}.
\end{proof}

Finally, we need to see that $S$-extensions of metric spaces
are invariant under the isometry of the metric spaces.

\begin{proposition}\label{firstinvariance}
  Suppose $(X,d_X)$ and $(Y,d_Y)$ subspaces of $\U_1$. If
  $(X,d_X)$ and $(Y,d_Y)$ are isometric, then the simplices
  $S_Z(X,d_X)$ and $S_Z(Y,d_Y)$ are affinely homeomorphic
  via a map that extends the isometry of $Z(X,d_X)$ and
  $Z(Y,d_Y)$.
\end{proposition}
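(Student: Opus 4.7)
The plan is to lift the isometry between $X$ and $Y$ up through the chain $X \subseteq \U_1 \subseteq E^\infty(\U_1, d_{\U_1})$ and then reinterpret this lift as a change of coordinates between two saturated families of functions on $E^\infty(\U_1, d_{\U_1})$, to which we can apply Proposition \ref{indep3}. First I will invoke Proposition \ref{zsets} to extend the given isometry $X \cong Y$ to an ambient isometry $\tilde\varphi \in \iso(\U_1)$ with $\tilde\varphi(Z(X,d_X)) = Z(Y,d_Y)$. Next I extend $\tilde\varphi$ canonically to an isometry $\tilde\varphi^\infty \in \iso(E^\infty(\U_1, d_{\U_1}))$, using the extension property of the Kat\v etov construction recalled just before Proposition \ref{huhu}.

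The key calculation is the following: since $\tilde\varphi^\infty$ is an isometry, for every $z \in \U_1$ and every $d \in D^\infty$ we have
\[
d_E(\tilde\varphi(z), d) = d_E(z, (\tilde\varphi^\infty)^{-1}(d)).
\]
Setting $D' = (\tilde\varphi^\infty)^{-1}(D^\infty)$, this identity rewrites as $i_{d_E D^\infty}(\tilde\varphi(z)) = i_{d_E D'}(z)$ as points of $[0,1]^\N$. Consequently, the set $S_Z(Y,d_Y)$, which by definition is the closed convex hull of $\{i_{d_E D^\infty}(w) : w \in Z(Y,d_Y)\} = \{i_{d_E D^\infty}(\tilde\varphi(z)) : z \in Z(X,d_X)\}$, is literally equal (as a subset of $[0,1]^\N$) to $S(Z(X,d_X), d_E D')$.

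Now I will apply Proposition \ref{indep3} to the ambient space $E^\infty(\U_1, d_{\U_1})$, which is saturated since it is isometric to the Urysohn sphere (Proposition \ref{usaturation}), with the two countable dense sets $D^\infty$ and $D' = (\tilde\varphi^\infty)^{-1}(D^\infty)$. This produces an affine homeomorphism
\[
\tau : S(E^\infty(\U_1, d_{\U_1}), d_E D^\infty) \longrightarrow S(E^\infty(\U_1, d_{\U_1}), d_E D')
\]
satisfying $\tau \circ i_{d_E D^\infty} = i_{d_E D'}$ on $E^\infty(\U_1, d_{\U_1})$. Restricting $\tau$ to $S_Z(X,d_X)$, which is the closed convex hull of $i_{d_E D^\infty}(Z(X,d_X))$, and using that $\tau$ is affine and continuous, the image is the closed convex hull of $i_{d_E D'}(Z(X,d_X))$, i.e.\ $S_Z(Y,d_Y)$ by the identity above. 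Moreover, on $Z(X,d_X)$ itself, $\tau$ coincides with $\tilde\varphi$, so the restriction is the desired affine homeomorphism extending the isometry. The main subtlety is the bookkeeping in the key calculation, namely verifying that composing with the ambient isometry $\tilde\varphi^\infty$ of $E^\infty(\U_1,d_{\U_1})$ is equivalent (under the coordinate map $i$) to changing the dense set $D^\infty$ to $(\tilde\varphi^\infty)^{-1}(D^\infty)$; once this is in place, Proposition \ref{indep3} delivers the affine homeomorphism essentially for free.
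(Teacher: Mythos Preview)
Your proof is correct and follows essentially the same route as the paper's: lift the isometry to $\tilde\varphi^\infty$ on $E^\infty(\U_1,d_{\U_1})$, observe that composing with this isometry amounts to a change of the countable dense set indexing the coordinate functions, and then invoke Proposition~\ref{indep3} to realize that change of coordinates as an affine homeomorphism. The only cosmetic difference is that the paper pushes $D^\infty$ forward to $E^\infty=(\varphi^\infty)''D^\infty$ and notes that $S(E^\infty(\U_1,d_{\U_1}),d_ED^\infty)$ and $S(E^\infty(\U_1,d_{\U_1}),d_EE^\infty)$ are literally the same subset of the Hilbert cube, whereas you pull $D^\infty$ back to $D'=(\tilde\varphi^\infty)^{-1}(D^\infty)$ and note that $S_Z(Y,d_Y)=S(Z(X,d_X),d_ED')$; these are the same observation up to replacing $\varphi$ by $\varphi^{-1}$.
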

\begin{proof}
  For simplicity again identify $(X,d_X)$ with $Z(X,d_X)$
  and $(Y,d_Y)$ with $Z(Y,d_Y)$. Let $\varphi:\U_1\to\U_1$
  be an isometry such that $\varphi''X=Y$. Let $D\subseteq
  \U_1$ be the fixed countable dense set and let
  $E=\varphi''D$. Write
  $\varphi^\infty:E^\infty(\U_1,d_{\U_1})\to
  E^\infty(\U_1,d_{\U_1})$ for the extension of $\varphi$
  and note that $(\varphi^\infty)''D^\infty=E^\infty$. Write
  $d$ for the metric on $E^\infty(\U_1,d_{\U_1})$. Note that
  since $\varphi^\infty$ is an isometry, the sets
  $S(E^\infty(\U_1,d_{\U_1}),D^\infty, d D^\infty)$ and
  $S(E^\infty(\U_1,d_{\U_1}),E^\infty,d E^\infty)$ are equal
  and we get the following diagram:
  \begin{center}
    \begin{tikzcd}
      S(E^\infty(\U_1,d_{\U_1}),D^\infty, d D^\infty)\arrow{rr}{\id}&   &S(E^\infty(\U_1,d_{\U_1}),E^\infty,d E^\infty)\\
      \U_1 \arrow{u}{i_{d D^\infty}}\arrow{rr}{\varphi} & &
      \U_1\arrow{u}[right]{i_{d E^\infty}}
    \end{tikzcd}
  \end{center}
  Composing this with the diagram in Proposition
  \ref{indep3} (where we take the restrictions of the maps
  $i_{d D^\infty}$ and $i_{d E^\infty}$ to $\U_1$), we get
  \begin{center}
    \begin{tikzcd}
      S(E^\infty(\U_1,d_{\U_1}),D^\infty, d D^\infty)
      \arrow{rr}{\tau_{D^\infty}^{E^\infty}}&   &S(E^\infty(\U_1,d_{\U_1}),D^\infty, d D^\infty) \\
      \U_1 \arrow{u}{i_{d D^\infty}}\arrow{rr}{\varphi} & &
      \U_1\arrow{u}[right]{i_{d D^\infty}}
    \end{tikzcd}
  \end{center}
  and since all the maps are affine and continuous, this
  immediately implies that $\tau_{D^\infty}^{E^\infty}$ maps
  $\conv(i_{d D^\infty}X)=S_Z(X,d_X)$ to $\conv(i_{d
    D^\infty}Y)=S_Z(Y,d_Y)$.
\end{proof}

The simplex $S(\U_1,d_{\U_1})$ plays now the role of a
universal homogeneous Choquet simplex. However, it does not
seem to be affinely homeomorphic to the Poulsen simplex. On
the other hand, that there is another convex compact set,
which seems to be related to the Poulsen simplex. It is the
set $S(\U_1,d_{\U_1},D,d_{\U_1}D)$. Write
$S'(\U_1,d_{\U_1})$ for $S(\U_1,d_{\U_1},D,d_{\U_1}D)$ (for
a countable dense set $D\subseteq\U_1$). Again, up to affine
homeomorphism, it does not depend on the choice of the
countable set $D$. Now, the map from $S(\U_1,d_{\U_1})$ to
$S'(\U_1,d_{\U_1})$, which forgets about the coordinates
corresponding to $D^\infty\setminus D$ is clearly affine and
continuous and hence $S(\U_1,d_{\U_1})$ can be treated as an
unfolded version of $S'(\U_1,d_{\U_1})$. We do not know if
$S'(\U_1,d_{\U_1})$ is a simplex but it seems to be more
closely related to the Pousen simplex than
$S(\U_1,d_{\U_1})$.

\begin{proposition}\label{poulsen}
  $S'(\U_1,d_{\U_1})$ has a dense set of extreme points.
\end{proposition}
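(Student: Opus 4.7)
The plan is to show that the canonical copy $i_{d_{\U_1}D}(\U_1)$ of the Urysohn sphere inside $S'(\U_1,d_{\U_1})$ is already dense in $S'(\U_1,d_{\U_1})$. This suffices, because by Proposition~\ref{usaturation} the family $d_{\U_1}D$ is saturated, and then Lemma~\ref{extreme} guarantees that every point of $i_{d_{\U_1}D}(\U_1)$ is an extreme point. By construction, $S'(\U_1,d_{\U_1})$ is the closed convex hull in $[0,1]^\N$ of $i_{d_{\U_1}D}(D)$, so any point can be approximated by a finite convex combination $p=\sum_{i=1}^k\alpha_i\,i_{d_{\U_1}D}(x_i)$ with $x_i\in\U_1$ (or even $x_i\in D$). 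The task is therefore to approximate each such $p$ by a single image point $i_{d_{\U_1}D}(y)$ with $y\in\U_1$.

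To carry this out, given $p$ and $\varepsilon>0$, I would first choose $N$ with $\sum_{n>N}2^{-n}<\varepsilon$. I would then consider the function $g\colon\{d_1,\ldots,d_N\}\to[0,1]$ defined by
\[
g(d_j)=\sum_{i=1}^k\alpha_i\,d_{\U_1}(x_i,d_j),
\]
and check that $g$ is a Kat\v etov function with values in $[0,1]$: convex combinations preserve both the $1$-Lipschitz inequality $|g(d_j)-g(d_l)|\leq d_{\U_1}(d_j,d_l)$ and the triangle-type inequality $g(d_j)+g(d_l)\geq d_{\U_1}(d_j,d_l)$, while each summand lies in $[0,1]$ because $\diam(\U_1)=1$. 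Since $g$ is defined on a finite subset of $\U_1$, the finite extension property of the Urysohn sphere (the same extension property used in the proof of Proposition~\ref{usaturation}) produces a point $y\in\U_1$ realizing $g$, that is, $d_{\U_1}(y,d_j)=g(d_j)$ for all $j\leq N$.

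With such a $y$ in hand, the first $N$ coordinates of $i_{d_{\U_1}D}(y)$ agree exactly with those of $p$, and the remaining coordinates contribute at most $\sum_{n>N}2^{-n}<\varepsilon$ to the Hilbert cube metric. Thus $d_{[0,1]^\N}(i_{d_{\U_1}D}(y),p)<\varepsilon$, and $i_{d_{\U_1}D}(y)$ is an extreme point of $S'(\U_1,d_{\U_1})$ by the observation above. This witnesses density of $\ext(S'(\U_1,d_{\U_1}))$ in $S'(\U_1,d_{\U_1})$.

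No serious obstacle arises; the whole argument reduces to applying the finite extension property of the Urysohn sphere to a Kat\v etov function manufactured as a convex combination of distance functions. The only point to verify is that convex combinations of Kat\v etov functions remain Kat\v etov and stay in $[0,1]$, which is an immediate calculation from the definitions.
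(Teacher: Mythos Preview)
Your argument is correct and follows essentially the same route as the paper: both show that the image of $\U_1$ is dense in $S'(\U_1,d_{\U_1})$ by truncating to finitely many coordinates and invoking the fact that a convex combination of Kat\v etov functions on a finite set is again Kat\v etov (this is exactly Claim~\ref{katetovconv}), hence realized as a distance function in $\U_1$. The paper organizes this as a set containment $S_n^n\subseteq \U_1\restriction n$, while you approximate an arbitrary finite convex combination pointwise, but the content is the same.
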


The proof follows from the following simple claim.

\begin{claim}\label{katetovconv}
  If $g$ and $h$ are Kat\v etov functions and
  $\alpha\in[0,1]$, then $\alpha f+(1-\alpha)g$ is also a
  Kat\v etov function.
\end{claim}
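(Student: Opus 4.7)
The plan is to verify that a convex combination of two Katětov functions satisfies both defining inequalities of a Katětov function, namely the 1-Lipschitz condition $|h(x)-h(y)|\le d_X(x,y)$ and the reverse triangle inequality $d_X(x,y)\le h(x)+h(y)$. Both follow from the fact that these defining conditions are preserved by taking convex combinations, since each side of each inequality is either linear in the function or bounded by a linear expression in the function via the triangle inequality for absolute values.

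First I would handle the Lipschitz condition: for $h=\alpha f+(1-\alpha)g$ and any $x,y\in X$, expand
\[
|h(x)-h(y)|=|\alpha(f(x)-f(y))+(1-\alpha)(g(x)-g(y))|
\]
and apply the triangle inequality to bound this by $\alpha|f(x)-f(y)|+(1-\alpha)|g(x)-g(y)|$, which in turn is at most $\alpha d_X(x,y)+(1-\alpha)d_X(x,y)=d_X(x,y)$ using the Katětov property of $f$ and $g$ separately. This step uses only $\alpha,1-\alpha\ge 0$ and the Lipschitz bounds for $f$ and $g$.

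For the reverse triangle inequality, the Katětov conditions give $d_X(x,y)\le f(x)+f(y)$ and $d_X(x,y)\le g(x)+g(y)$. Multiplying the first by $\alpha$ and the second by $1-\alpha$ and adding yields
\[
d_X(x,y)=\alpha d_X(x,y)+(1-\alpha)d_X(x,y)\le h(x)+h(y),
\]
which is exactly what is needed. There is no real obstacle here; the whole claim is just the observation that the set of Katětov functions on $X$ is a convex subset of the space of real-valued functions on $X$, and this convexity is precisely what allows the argument in Proposition \ref{poulsen} to produce extreme points from averages of distance functions. I would present the proof in two short displayed computations, one for each inequality.
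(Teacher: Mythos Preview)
Your proof is correct and is essentially the same approach as the paper's, which simply states that the Kat\v etov inequalities for the convex combination follow directly from those for $f$ and $g$; you have merely spelled out the two elementary computations that the paper leaves to the reader.
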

\begin{proof}
  This is an elementary computation. The Kat\v etov
  inequalities for $\alpha f+(1-\alpha)g$ follow directly
  from the Kat\v etov inequalities for $f$ and $g$.
\end{proof}

\begin{proof}[Proof of Proposition \ref{poulsen}]
  By the fact that $\U_1\subseteq\ext(S'(\U_1,d_{\U_1}))$,
  it is enough to check that $\U_1$ is dense in
  $S'(\U_1,d_{\U_1})$. Let $s\in S'(\U_1,d_{\U_1})$ and
  $\varepsilon>0$. Find $n$ such that
  $2^{-(n+1)}<\varepsilon\slash 2$ and $s$ is
  $\varepsilon$-close to
  $S^n_n(\U_1,d_{\U_1},D,d_{\U_1}D)$. Write
  $D=(d_1,d_2,\ldots)$ and $\U_1\restriction n$ for the
  image of $\U_1\subseteq S(\U_1,d_{\U_1},D,d_{\U_1}D)$
  under the projection map from $[0,1]^\N$ to
  $[0,1]^n$. Note that it suffices to show that
  $S^n_n(\U_1,d_{\U_1},D,d_{\U_1}D)$ is contained in
  $\U_1\restriction n$. Clearly, the vertices of the simplex
  $S^n_n(\U_1,d_{\U_1},D,d_{\U_1}D)$ belong to
  $\U_1\restriction n$. For each $i\leq n$ let
  $f_i:\{d_1,\ldots,d_n\}\to[0,1]$ be defined
  $f_i(z)=d(d_i,z)$. Now, Claim \ref{katetovconv} implies
  that for every $\alpha_1,\ldots,\alpha_n\in[0,1]$ with
  $\sum_i\alpha_i=1$ the function $\sum_i \alpha_i f_i$ is a
  Kat\v etov function with values in $[0,1]$, thus realized
  in $\U_1$. This implies that the set of points
  $(d(x,d_1),\ldots,d(x,d_n))\in[0,1]^n$ for $x\in\U_1$ is
  convex. But the latter set is equal to $\U_1\restriction
  n$. Thus, since the the vertices of
  $S_n^n(\U_1,d_{\U_1},D,d_{\U_1}D)$ belong to
  $\U_1\restriction n$ and $\U_1\restriction n$ is convex,
  we have that $S_n^n(\U_1,d_{\U_1},D,d_{\U_1}D)$ is
  contained in $\U_1\restriction n$, as needed. This ends
  the proof.
\end{proof}

\section{Iterated cone construction}\label{sec:cones}

Given a simplex $S$ and a point $s\in S$ we define the
\textit{cone of $S$ over $s$} as follows. Consider
$Y=S\times[0,1]$ and let
$$\cone(S,s)=\mathrm{conv}((S\times\{0\})\cup(s,1)),$$
where the convex hull is taken in $Y$ (recall our convention
on metrics in Section \ref{sec:convex} and note that if $S$
is a subset of the Hilbert cube, then the cone is embedded
in the Hilbert cube as well).

The point $(s,1)$ is called the \textit{cone point} of the
cone and we will identify $S$ with
$S\times\{0\}\subseteq\cone(S,s)$. We also denote the cone
point as $c(s)$. The cone admits a natural affine continuous
map $\pi:\cone(S,s)\rightarrow S$ which is just the
projection map in $Y$. It maps the cone point $c(s)$ to the
point $s$.

We state now a couple of basic facts on the structure of
cones.

\begin{lemma}\label{face}
  If $S$ is a simplex and $s\in S$, then $\cone(S,s)$ is a
  simplex, $S$ is a face of $\cone(S,s)$ and
  $\ext(\cone(S,s))=\ext(S)\cup\{c(s)\}$.
\end{lemma}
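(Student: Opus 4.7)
The plan is to verify the three assertions in turn, using the key parametrization: every point of $\cone(S,s) = \conv((S\times\{0\})\cup\{(s,1)\})$ in $Y = S\times[0,1]$ can be written as
\[
(y,t) = t\cdot(s,1) + (1-t)\cdot(x,0)
\]
for some $t\in[0,1]$ and $x\in S$. Indeed, the convex hull defining the cone consists of exactly such points, and for $t < 1$ the element $x = (y-ts)/(1-t) \in S$ is uniquely determined from $(y,t)$, while for $t=1$ the only point is $c(s)=(s,1)$.

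First, I would verify that $S\times\{0\}$ is a face. If $(y,0) = \tfrac{1}{2}((y_1,t_1)+(y_2,t_2))$ with both summands in $\cone(S,s)$, then $t_1+t_2=0$ with $t_i\geq 0$, forcing $t_1=t_2=0$, so both summands lie in $S\times\{0\}$. Thus $S$ is a face of $\cone(S,s)$.

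Next, I would identify the extreme points. The cone point $c(s)$ is extreme: if it is a midpoint of two points in $\cone(S,s)$, their $t$-coordinates must both equal $1$, and the unique point of $\cone(S,s)$ with $t$-coordinate $1$ is $c(s)$ itself. A point $(y,t)$ with $0<t<1$ is a nontrivial convex combination of $c(s)$ and $(x,0)\in S\times\{0\}$ by the parametrization, hence not extreme. Finally, a point $(x,0)$ is extreme in $\cone(S,s)$ if and only if $x\in\ext(S)$: since $S$ is a face, any decomposition of $(x,0)$ in $\cone(S,s)$ is actually a decomposition inside $S\times\{0\}$.

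For the simplex claim, I would use the characterization that a metrizable compact convex set is a simplex iff every point has a unique representation as the barycenter of a probability measure supported on the extreme boundary. Given $(y,t)\in\cone(S,s)$, any probability measure $\mu$ on $\ext(\cone(S,s))=\ext(S)\cup\{c(s)\}$ with barycenter $(y,t)$ decomposes as $\mu = \alpha\delta_{c(s)} + (1-\alpha)\nu$, where $\nu$ is a probability measure on $\ext(S)$. Projecting the barycenter identity to the $[0,1]$-factor forces $\alpha = t$; projecting to the $S$-factor gives that the barycenter of $\nu$ in $S$ equals $(y-ts)/(1-t)$ when $t<1$, uniquely determining $\nu$ by the assumption that $S$ is a simplex. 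When $t=1$ we must have $\alpha=1$ and so $\mu=\delta_{c(s)}$. Existence of the representation is likewise given by the recipe above combined with Choquet's theorem applied inside $S$. This yields uniqueness, so $\cone(S,s)$ is a simplex.

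The only mild subtlety is that for this barycentric uniqueness argument to apply in the metrizable setting, one needs $\ext(\cone(S,s))$ to be a Borel (in fact $G_\delta$) set, which is automatic once it is shown to be $\ext(S)\cup\{c(s)\}$, since $\ext(S)$ is $G_\delta$ and we are only adjoining a single point. Apart from this bookkeeping, each step reduces cleanly to the simplex property of $S$ together with the face property established at the start.
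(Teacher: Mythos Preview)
Your proof is correct and follows essentially the same approach as the paper. Both arguments use the second coordinate in $S\times[0,1]$ to establish the face property and to pin down the mass at $c(s)$ in the simplex verification; the only cosmetic difference is that the paper phrases the simplex step via the signed-measure criterion (a boundary measure with all affine integrals zero must vanish), whereas you argue uniqueness of the representing probability measure directly---and you are more explicit than the paper in ruling out extreme points with $0<t<1$.
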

\begin{proof}
  Note that if two points $z_1,z_2\in S\times[0,1]$ have an
  affine combination that lies in $S$, then both $z_1$ and
  $z_2$ must belong to $S$. This implies that $S$ is a face
  of the convex compact set $\cone(S,s)$ and that
  $\ext(S)\subseteq\ext(\cone(S,s))$. It is clear that
  $c(s)$ is an extreme point of $\cone(S,s)$, so indeed
  $\ext(\cone(S,s))=\ext(S)\cup\{c(s)\}$. To see that
  $\cone(S,s)$ is a simplex note that if $\mu$ is a measure
  concertated on $\ext(S)\cup\{c(s)\}$ such that $\int
  fd\mu=0$ for every affine continuous function on
  $\cone(S,s)$, then $\mu(\{c(s)\})=0$ and hence $\mu$ must
  be concentrated on $\ext(S)$.
\end{proof}

\begin{lemma}\label{doublecone}
  Given a simplex $S$ and two points $s_1,s_2\in S$ we
  have $$\cone(\cone(S,s_1),s_2)\simeq\cone(\cone(S,s_2),s_1).$$
\end{lemma}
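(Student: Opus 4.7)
The plan is to realize both iterated cones as explicit subsets of the common ambient space $S\times[0,1]^2$ and then exhibit a coordinate swap as the desired affine homeomorphism.

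First I would unpack the construction. By definition, $\cone(S,s_1)\subseteq S\times[0,1]$ consists of the points $((1-\eta)s'+\eta s_1,\eta)$ with $s'\in S$, $\eta\in[0,1]$; inside $\cone(S,s_1)$ the point $s_2$ is identified with $(s_2,0)$. Hence a generic element of $\cone(\cone(S,s_1),s_2)$, which is a convex combination $(1-\theta)\cdot(x,0)+\theta\cdot((s_2,0),1)$ with $x\in\cone(S,s_1)$, takes the form
\[
\bigl((1-\theta)(1-\eta)s' + (1-\theta)\eta\, s_1 + \theta s_2,\;(1-\theta)\eta,\;\theta\bigr)\in S\times[0,1]^2.
\]
Setting $\alpha=(1-\theta)\eta$ and $\beta=\theta$, one checks that $(\alpha,\beta)$ ranges exactly over $\{(\alpha,\beta):\alpha,\beta\geq0,\,\alpha+\beta\leq 1\}$, and the above point rewrites as $((1-\alpha-\beta)s'+\alpha s_1+\beta s_2,\alpha,\beta)$. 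Thus
\[
\cone(\cone(S,s_1),s_2)=\{((1-\alpha-\beta)s'+\alpha s_1+\beta s_2,\alpha,\beta):s'\in S,\,\alpha,\beta\geq0,\,\alpha+\beta\leq1\}.
\]

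The same unfolding, applied to $\cone(\cone(S,s_2),s_1)$, gives
\[
\cone(\cone(S,s_2),s_1)=\{((1-\alpha'-\beta')s''+\alpha' s_2+\beta' s_1,\alpha',\beta'):s''\in S,\,\alpha',\beta'\geq0,\,\alpha'+\beta'\leq1\}.
\]
Here $\alpha$ (resp.\ $\alpha'$) records the weight of the inner cone point, and $\beta$ (resp.\ $\beta'$) the weight of the outer one; the roles of $s_1$ and $s_2$ are simply interchanged.

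Finally I would define the affine homeomorphism $\Phi\colon S\times[0,1]^2\to S\times[0,1]^2$ by $\Phi(z,u,v)=(z,v,u)$. This is an affine homeomorphism of the ambient space, and the two formulas above show immediately that $\Phi$ bijectively maps $\cone(\cone(S,s_1),s_2)$ onto $\cone(\cone(S,s_2),s_1)$ (taking $s''=s'$, $\alpha'=\beta$, $\beta'=\alpha$; note that when $\alpha+\beta=1$ the element $s'$ drops out on both sides, so there is no ambiguity). The only thing requiring any care is the coordinate bookkeeping in the first step; once that is in place, the homeomorphism is manifest, and both sides are automatically simplices by Lemma~\ref{face}.
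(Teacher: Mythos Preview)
Your proof is correct and follows essentially the same idea as the paper: both iterated cones are realized inside $S\times[0,1]^2$ as the closed convex hull of $S\times\{(0,0)\}$ together with two cone points, and the coordinate swap $(z,u,v)\mapsto(z,v,u)$ interchanges them. The paper states this in one line and then also records an explicit formula for the map in intrinsic cone coordinates, but your more detailed unpacking of the parametrization is exactly the same argument.
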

\begin{proof}
  Both simplices are affinely homeomorphic to the closed
  convex hull of $S\times\{(0,0)\}$, $(s_1,0,1)$ and
  $(s_2,1,0)$ in $S\times[0,1]^2$. To see the affine
  homeomorphism of $\cone(\cone(S,s_1),s_2)$ and
  $\cone(\cone(S,s_2),s_1)$ directly, write $c(s_1)$ for the
  cone point of $\cone(S,s_1)$, $c'(s_2)$ for the cone point
  of $\cone(\cone(S,s_1),s_2)$, $c(s_2)$ for the cone point
  of $\cone(S,s_2)$ and $c'(s_1)$ for the cone point of
  $\cone(\cone(S,s_2),s_1))$. The affine homeomorphism then
  maps $(1-\beta)((1-\alpha) x +\alpha c(s_1))+\beta
  c'(s_2)$ to $(1-\gamma)((1-\delta)x+\delta c(s_2))+\gamma
  c'(s_1)$, where $\gamma=(1-\beta)\alpha$ and
  $\delta=\beta\slash(1-\alpha(1-\beta))$.
\end{proof}

Given the above lemma, we use the notation
$\cone(S,s_1,\ldots,s_n)$ to denote an iterated cone of the
form $\cone(\ldots\cone(S,s_1)\ldots,s_n)$ (or with any
other permutation). We also call a cone of the form
$\cone(S,s_1,s_2)$ a \textit{double cone}. Given a subset
$\{i_1,\ldots,i_k\}\subseteq\{1,\ldots,n\}$, the simplex
$\cone(S,s_{i_1},\ldots,s_{i_k})$ is a subset of
$\cone(S,s_1,\ldots,s_n)$ in the same way $S$ is a subset of
$\cone(S,s)$. The simplex $\cone(S,s_{i_1},\ldots,s_{i_k})$
is then a face of $\cone(S,s_1,\ldots,s_n)$. We call a face
of the form $\cone(S,s_{i_1},\ldots,s_{i_k})$ in
$\cone(S,s_1,\ldots,s_n)$ a \textit{subcone}.

\begin{lemma}\label{conecomplement}
  Given a simplex $S$ and $s_1,\ldots,s_n\in S$, if
  $\{i_1,\ldots,i_k\}\subseteq\{1,\ldots,n\}$ and
  $\{j_1,\ldots,j_{n-k}\}=\{1,\ldots,n\}\setminus\{i_1,\ldots,i_k\}$,
  then $$\cone(S,s_1,\ldots,s_n)\simeq\cone(\cone(S,s_{i_1},\ldots,s_{i_k}),s_{j_1},\ldots,s_{j_{n-k}}).$$
\end{lemma}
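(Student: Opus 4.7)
The plan is to reduce the statement to Lemma \ref{doublecone} by invoking the fact that adjacent transpositions generate the symmetric group $S_n$. The content of the lemma is essentially that the iterated cone construction is both order-independent (up to affine homeomorphism) and suitably ``associative,'' so that any way of bracketing and permuting yields the same simplex up to $\simeq$.

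The first step will be to prove the reordering lemma: for any permutation $\sigma\in S_n$,
\[
\cone(S,s_1,\ldots,s_n)\simeq\cone(S,s_{\sigma(1)},\ldots,s_{\sigma(n)}).
\]
Since adjacent transpositions generate $S_n$, it suffices to treat the case where $\sigma$ swaps positions $i$ and $i+1$. Set $A=\cone(S,s_1,\ldots,s_{i-1})$ (with $A=S$ if $i=1$). Lemma \ref{doublecone} produces an affine homeomorphism $\varphi:\cone(A,s_i,s_{i+1})\to\cone(A,s_{i+1},s_i)$ that fixes the face $A$. I then propagate $\varphi$ outward using the following elementary observation, which I would isolate as a sub-claim: if $\psi:C\to D$ is an affine homeomorphism of simplices and $c\in C$ satisfies $\psi(c)=d$, then $\psi$ extends uniquely to an affine homeomorphism $\cone(C,c)\to\cone(D,d)$ sending cone point to cone point (by Lemma \ref{face}, this is just the affine extension determined on the extreme points). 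Iterating this observation $n-i-1$ times, using the points $s_{i+2},\ldots,s_n$ viewed inside the respective cones via the face inclusions, lifts $\varphi$ to an affine homeomorphism of the full iterated cones.

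With the reordering lemma in hand, I apply it to the permutation $\sigma$ sending $(1,\ldots,n)$ to $(i_1,\ldots,i_k,j_1,\ldots,j_{n-k})$, obtaining
\[
\cone(S,s_1,\ldots,s_n)\simeq\cone(S,s_{i_1},\ldots,s_{i_k},s_{j_1},\ldots,s_{j_{n-k}}).
\]
By the inductive definition of the iterated cone notation introduced immediately after Lemma \ref{doublecone}, and using that $s_{j_1},\ldots,s_{j_{n-k}}\in S$ can be viewed as points of the face $S\subseteq\cone(S,s_{i_1},\ldots,s_{i_k})$ (Lemma \ref{face}), the right-hand side is by definition the same as $\cone(\cone(S,s_{i_1},\ldots,s_{i_k}),s_{j_1},\ldots,s_{j_{n-k}})$. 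Combining these two identifications gives the desired affine homeomorphism.

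The only real bookkeeping obstacle is the iterative propagation of $\varphi$ through the outer layers of cones, but this is routine once one notes that the cone construction is functorial with respect to affine homeomorphisms of the base that preserve a marked cone point, so the lift exists uniquely at each stage.
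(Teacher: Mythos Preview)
Your proof is correct and follows the same route as the paper, which simply says ``This follows directly from the definitions and Lemma \ref{doublecone}.'' The permutation-invariance you carefully establish via adjacent transpositions is precisely what the paper already built into the notation $\cone(S,s_1,\ldots,s_n)$ immediately after Lemma \ref{doublecone} (``or with any other permutation''), so your argument is a more explicit version of what the paper leaves implicit; once that convention is in place, both sides of the claimed isomorphism unfold to the same iterated cone by definition.
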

\begin{proof}
  This follows directly from the definitions and Lemma
  \ref{doublecone}.
\end{proof}
\begin{definition}
  Given a simplex $S$ and its countable (enumerated) subset
  $D=(d_n:n\in\N)$, define the \textit{iterated cone over
    $D$} as follows. Let $S_0=S$ and for each $n\in\N$ let
  $S_n=\cone(S_{n-1},d_n)$ and let $\pi_n:S_n\rightarrow
  S_{n-1}$ be the projection
  map. Define $$\cone(S,D)=\varprojlim(S_n,\pi_n).$$
\end{definition}

Note that if $S$ is a subset of the Hilbert cube, then
$\cone(S,D)$ can be naturally embedded into the Hilbert cube
as well. Note also that the inverse system in the above
definition is increasing. Thus, we treat $S_n$ as a face of
$\cone(S,D)$, for each $n\in\N$. Given that, all cone points
of the simplices $S_n$ belong to $\cone(S,D)$ and we refer
to them as to the \textit{cone points of the iterated
  cone}. In principle, the iterated cone over $D$ may depend
on the ordering of the set $D$. As we will see, this does
not happen.

\begin{lemma}\label{extension}
  Let $S$ be a simplex, $\varepsilon>0$ and
  $\varphi:S\rightarrow S$ be an affine homeomorphism. Given
  two points $s_1,s_2\in S$ if
  $d_S(\varphi(s_1),s_2)<\varepsilon$, then there is an
  affine homeomorphism
  $\varphi':\cone(S,s_1)\rightarrow\cone(S,s_2)$ extending
  $\varphi$ such
  that $$d_S(\varphi(\pi_1(s)),\pi_2(\varphi'(x)))<\varepsilon$$
  for every $s\in\cone(S,s_1)$, where $\pi_1:\cone(S,s_1)\to
  S$ and $\pi_2:\cone(S,s_2)\to S$ are the projection maps.
\end{lemma}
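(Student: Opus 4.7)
The plan is to define $\varphi'$ by the obvious formula: every element of $\cone(S,s_1)$ has a unique representation as $(1-t)x+tc(s_1)$ with $x\in S$ and $t\in[0,1]$ (where $c(s_1)$ is the cone point), since $S$ is a face of $\cone(S,s_1)$ and $c(s_1)$ is an extreme point not in $S$ by Lemma \ref{face}. I will set
\[
\varphi'\bigl((1-t)x+tc(s_1)\bigr)=(1-t)\varphi(x)+tc(s_2).
\]
This map is well-defined, clearly extends $\varphi$ (take $t=0$), sends cone point to cone point, and is an affine homeomorphism onto $\cone(S,s_2)$: its inverse has the analogous form built from $\varphi^{-1}$ and the interchange of $s_1,s_2$.

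The only thing to verify is the estimate. Directly from the definitions of the projections $\pi_1,\pi_2$,
\[
\pi_1\bigl((1-t)x+tc(s_1)\bigr)=(1-t)x+ts_1,\qquad
\pi_2\bigl((1-t)\varphi(x)+tc(s_2)\bigr)=(1-t)\varphi(x)+ts_2.
\]
Applying the affine map $\varphi$ to the first, I get $\varphi(\pi_1(s))=(1-t)\varphi(x)+t\varphi(s_1)$. Now using the standing convention from Section \ref{sec:convex} that $d_S$ is given by a norm $\|\cdot\|$ on an ambient space, I conclude
\[
d_S\bigl(\varphi(\pi_1(s)),\pi_2(\varphi'(s))\bigr)=\bigl\|t\varphi(s_1)-ts_2\bigr\|=t\,d_S(\varphi(s_1),s_2)<t\varepsilon\le\varepsilon.
\]

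The argument is essentially a one-line construction followed by a one-line norm estimate; the only subtle point is invoking the convention that the metric comes from a norm, which is exactly what makes the final line go through uniformly in $t$. No obstacle beyond bookkeeping is expected.
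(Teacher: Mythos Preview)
Your proposal is correct and essentially the same as the paper's: you define $\varphi'$ as the unique affine extension of $\varphi$ sending $c(s_1)$ to $c(s_2)$, exactly as the paper does. The only difference is in verifying the estimate: the paper observes that $\varphi\circ\pi_1$ and $\pi_2\circ\varphi'$ are affine continuous maps that are $\varepsilon$-close on every extreme point of $\cone(S,s_1)$ and then invokes Proposition~\ref{affproximity}, whereas you compute the norm difference directly. Your direct computation is a clean shortcut here (and is, in effect, what underlies Proposition~\ref{affproximity} anyway). One cosmetic nit: your chain $t\,d_S(\varphi(s_1),s_2)<t\varepsilon\le\varepsilon$ breaks at $t=0$, but of course in that case the distance is $0<\varepsilon$ directly.
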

\begin{proof}
  The map $\varphi'$ is the affine map whose restriction to
  $S$ is $\varphi$ and which maps $c(s_1)$ to $c(s_2)$. To
  see that
  $d_S(\pi_2(\varphi'(s)),\varphi(\pi_1(s)))<\varepsilon$
  for each $s\in\cone(S,s_1)$, note that
  $\pi_2\circ\varphi'$ and $\varphi\circ\pi_1$ are two
  affine continuous maps such that
  $d_S(\pi_2(\varphi'(e)),\varphi(\pi_1(e)))<\varepsilon$
  holds for every extreme point of $\cone(S,s_1)$. Thus, by
  Proposition \ref{affproximity},
  $d_S(\pi_2(\varphi'(s)),\varphi(\pi_1(s)))<\varepsilon$ is
  true for every $s\in\cone(S,s_1)$.
\end{proof}

\begin{proposition}\label{correctness}
  Given a simplex $S$ and an affine homeomorphism
  $\varphi:S\to S$, if $D=(d_n:n\in\N)$ and $E=(e_n:n\in\N)$
  are two subsets of $S$ such that $\cl(D)=\cl(\varphi''E)$,
  then there is an extension $\varphi'$ of $\varphi$ to an
  affine homeomorphism $\varphi':\cone(S,D)\to\cone(S,E).$
\end{proposition}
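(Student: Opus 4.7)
The plan is to construct an approximate intertwining between the increasing inverse systems $(S_n,\pi_n)$ defining $\cone(S,D)$ and $(T_n,\rho_n)$ defining $\cone(S,E)$, where $S_n=\cone(S,d_1,\ldots,d_n)$ and $T_n=\cone(S,e_1,\ldots,e_n)$, and then to invoke Proposition~\ref{approxinter}. By Lemma~\ref{conecomplement} I am free to permute the cone points within each simplex up to canonical affine isomorphism, so I will in effect pass to cofinal subsystems whose ordering is dictated by the back-and-forth.

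The hypothesis $\cl(D)=\cl(\varphi''E)$ rewrites as $\varphi(\cl(E))=\cl(D)$; applying the homeomorphism $\varphi^{-1}$ yields the symmetric statement $\cl(E)=\cl(\varphi^{-1}{}''D)$. Combined with uniform continuity of $\varphi^{\pm 1}$ on the compact $S$, this supplies two standing approximation facts: for every $d\in D$ and $\varepsilon>0$ there is $e\in E$ with $d_S(\varphi(e),d)<\varepsilon$, and for every $e\in E$ and $\varepsilon>0$ there is $d\in D$ with $d_S(\varphi^{-1}(d),e)<\varepsilon$. Both directions will drive the back-and-forth; fix tolerances $\varepsilon_i\to 0$.

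Inductively I build affine homeomorphic embeddings $\varphi_i\colon S_{n_i}\to T_{m_i}$ and $\psi_i\colon T_{m_i}\to S_{n_{i+1}}$ extending $\varphi$ and $\varphi^{-1}$ on $S$ respectively, with $\psi_i\circ\varphi_i=\id$ and $\varphi_{i+1}\circ\psi_i=\id$. At the forward step I handle the next unused $d\in D$: the first approximation fact furnishes a fresh $e\in E$ matched to $d$ within $\varepsilon_{i+1}$; by Lemma~\ref{conecomplement} I may install $c(e)$ as the next cone point on the $E$-side; and Lemma~\ref{extension} extends $\varphi_i$ to a homeomorphism $\varphi_{i+1}$ sending the new cone point $c(d)$ to $c(e)$, whose projection-closeness conclusion supplies exactly the bound required in the definition of an approximate intertwining from Section~\ref{sec:twisted}. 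The backward step, handling the next unused $e\in E$ via the second approximation fact, is entirely symmetric and ensures every $e_n$ is eventually processed.

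Once the back-and-forth is complete, Proposition~\ref{approxinter} produces an affine homeomorphism $\varphi'\colon\cone(S,D)\to\cone(S,E)$ extending every $\varphi_i$, and hence extending $\varphi=\varphi_0$ on $S=S_0$. The main obstacle is the bookkeeping of accumulating approximation errors: Lemma~\ref{extension} controls distances only one cone layer at a time, while the inverse-limit topology demands control at every finite level simultaneously, so $\varepsilon_i$ must be chosen to shrink fast enough that the deviation accumulated across the finitely many cone layers present at stage $i$ remains inside the tolerance required for approximate intertwining---precisely the telescoping that Proposition~\ref{approxinter} is designed to absorb.
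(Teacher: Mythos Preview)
Your approach matches the paper's: build an approximate intertwining via Lemma~\ref{extension} (extending one cone layer at a time) and Lemma~\ref{conecomplement} (so that ranges remain subcones), then invoke Proposition~\ref{approxinter}. The paper extends over several new cone points per stage rather than one, and keeps explicit track of the subcone structure of the ranges, but this is cosmetic.

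There is, however, a directional mismatch you should notice. From the stated hypothesis $\cl(D)=\cl(\varphi''E)$ you correctly derive that for each $d\in D$ there is $e\in E$ with $d_S(\varphi(e),d)<\varepsilon$. But to send $c(d)\mapsto c(e)$ via Lemma~\ref{extension} (with $s_1=d$, $s_2=e$) you need $d_S(\varphi(d),e)<\varepsilon$, which is a different condition unless $\varphi$ happens to be an involution. The paper's own proof in fact uses the inequality $d_S(\varphi(d),e)<\varepsilon$, which follows from $\varphi(\cl(D))=\cl(E)$ rather than from the hypothesis as printed; the statement evidently has $D$ and $E$ swapped, and the application in Proposition~\ref{homo} (where one has $\bar\varphi''\cl(M(X,d_X,D(X)))=\cl(M(Y,d_Y,D(Y)))$) confirms the intended direction. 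With the hypothesis read that way, your argument goes through.
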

\begin{proof}
  Write $X=\cone(S,D)$ and $Y=\cone(S,E)$ and
  define $X_n$ and $Y_n$ as $X_0=Y_0=S$ and
  $X_{n+1}=\cone(X_n,d_n)$ and $Y_{n+1}=\cone(Y_n,e_n)$ so
  that $X=\varprojlim X_n$ and $Y=\varprojlim Y_n$. We
  define inductively two increasing sequences of natural
  numbers $n_i$ and $m_i$ and affine continuous maps
  $\varphi_i:X_{n_i}\rightarrow Y_{m_i}$ and
  $\psi_i:Y_{m_i}\rightarrow X_{n_{i+1}}$ such that
  $\varphi_0=\varphi$ and
  \begin{itemize}
  \item $(\varphi_n,\psi_n:n\in\N)$ is an approximate
    intertwining.
  \item $\rng(\varphi_n)$ and $\rng(\psi_n)$ is a subcone of
    $X$ or $Y$, for each $n\in\N$.
  \end{itemize}
  The theorem will then follow by Proposition
  \ref{approxinter}.

  Let $\varphi_0=\psi_0^{-1}=\varphi$. Suppose
  $\varphi_{i-1},\psi_{i-1}$ are defined. We need to define
  $\varphi_i:X_{n_i}\rightarrow Y_{m_{i+1}}$. Let
  $\varphi_i'=\psi_i^{-1}$ and
  $X_{n_i}'=\rng(\psi_i)=\dom(\varphi_i')$. By our
  assumption, $X_{n_i}'$ is a subcube, so there are
  $p_1,\ldots,p_k\in\N$ such that
  $X_{n_i}'=\cone(S,d_{p_1},\ldots,d_{p_k})$. Find
  $n_{i+1}>n_i$ such that $p_j<n_{i+1}$ for each $j\leq
  k$. Let
  $\{q_1,\ldots,q_l\}=\{1,\ldots,n_{i+1}\}\setminus\{p_1,\ldots,p_k\}$. By
  Lemma \ref{conecomplement}, $X_{n_i}$ is affinely
  homeomorphic to
  $\cone(X_{n_i}',d_{q_1},\ldots,d_{q_l})$. We inductively
  find increasing numbers $m_i^j\in\N$ and maps
  $\varphi_i^j:\cone(X_{n_i}',d_{q_1},\ldots,d_{q_j})\rightarrow
  Y_{m_i^j}$ so that $\varphi_i^0=\varphi_i'$,
  $\varphi_i^j\subseteq\varphi_i^{j+1}$ and for each $j$ we
  have
  \begin{equation}\label{eq2}
    |d_{Y_{m_i}}(\varphi_i(x\restriction X_{n_i}),\varphi_i^j(x)\restriction Y_{m_i})|<2^{i+j}
  \end{equation}
  for each $x\in\cone(X_{n_i},d_{q_1},\ldots,d_{q_j})$.
  Given $\varphi_i^j$, find $m_i^{j+1}$ such that
  $$d_{S}(\varphi(d_{q_{j+1}}),e_{m_i^{j+1}})=d_{Y_{m_i^j}}(\varphi_i^j(d_{q_{j+1}}),e_{m_i^{j+1}})<2^{i+j}$$
  and use Lemma \ref{extension} to extend $\varphi_i^j$ to
  $\varphi_i^{j+1}$ so so that (\ref{eq2}) holds. At the
  end, put $\varphi_i=\varphi_i^l$ and $m_i=m_i^l$. The map
  $\psi_{i+1}$ is defined analogously.

  The condition (\ref{eq2}) and an analogous inequality for
  $\psi_i$ witness that $\varphi_i$ and $\psi_i$ form an
  approximate intertwining.
\end{proof}

Applying Proposition \ref{correctness} to $\varphi=\id$ we
get that that the definition of $\cone(S,D)$ indeed does not
depend on the ordering of $D$.

\begin{lemma}\label{infcorrectness}
  Given a simplex $S$, with a countable subset $D$ and $d\in
  S\setminus D$ we have $$\cone(S,\{d\}\cup
  D)\simeq\cone(\cone(S,D),d).$$
\end{lemma}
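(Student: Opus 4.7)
The plan is to use Proposition \ref{correctness} to exploit the independence of the iterated cone on its enumeration: choosing the enumeration of $\{d\} \cup D$ that places $d$ first and writing $D = (d_1, d_2, \ldots)$, one has $\cone(S, \{d\} \cup D) = \varprojlim_n T_n$, where $T_0 = S$ and $T_{n+1} = \cone(S, d, d_1, \ldots, d_n)$, with bonding maps $T_{n+1} \to T_n$ crushing the last cone added (that is, the one over $d_n$).

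The key technical step I would prove is that cones commute with inverse limits: for any inverse system $(C_n, \pi_n)$ of simplices with inverse limit $C$ and any $c \in C$, setting $c_n = c\restriction C_n$, one has a canonical affine homeomorphism
$$\cone(C, c) \simeq \varprojlim_n \cone(C_n, c_n),$$
with right-hand bonding maps induced by $\pi_n \times \id_{[0,1]}$. To verify this I would embed both sides in $C \times [0,1] = \varprojlim (C_n \times [0,1])$ and observe that any compatible family $(y_n, t_n)$ on the right must have constant second coordinate $t_n = t$ and compatible first coordinates $(y_n)$ giving some $y \in C$; for $t < 1$ the points $x_n = (y_n - t c_n)/(1 - t)$ form a compatible family in the $C_n$, producing $x \in C$ with $(y, t) = ((1-t)x + tc, t) \in \cone(C, c)$, while $t = 1$ forces $y_n = c_n$ for all $n$ and hence $y = c$. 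Applying this to $C = \cone(S, D) = \varprojlim_n C_n$ with $C_n = \cone(S, d_1, \ldots, d_n)$ and $c = d \in S \subseteq \cone(S, D)$ (so $c_n = d$ for every $n$), I obtain
$$\cone(\cone(S, D), d) \simeq \varprojlim_n \cone(S, d_1, \ldots, d_n, d).$$

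It remains to identify the two inverse systems. By iterating Lemma \ref{doublecone} (equivalently, by Lemma \ref{conecomplement}) there is for each $n$ an affine homeomorphism $\cone(S, d_1, \ldots, d_n, d) \simeq \cone(S, d, d_1, \ldots, d_n) = T_{n+1}$. The main point to check, and the step I expect to be the real obstacle, is that these level-wise ``swap'' isomorphisms commute with the bonding maps, assembling into an isomorphism of inverse systems rather than merely a level-wise equivalence. Since on either side the bonding map from level $n+1$ to level $n$ acts only on the coordinate associated with $d_n$ (crushing that cone while leaving the cone over $d$ intact), this reduces to a bookkeeping check using the explicit formulae in the proof of Lemma \ref{doublecone}. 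Once this compatibility is in place, the two inverse systems are isomorphic (up to the shift $n \mapsto n+1$), yielding $\cone(\cone(S, D), d) \simeq \varprojlim_n T_n = \cone(S, \{d\} \cup D)$.
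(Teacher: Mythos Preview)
Your proposal is correct and follows essentially the same route as the paper: the paper also first observes that cones commute with inverse limits (for $x\in X_0$ in an increasing system), applies this to $\cone(S,D)=\varprojlim\cone(S,d_1,\ldots,d_n)$ with $x=d$, and then invokes Lemma~\ref{doublecone} to identify $\cone(\cone(S,d_1,\ldots,d_n),d)$ with $\cone(S,d,d_1,\ldots,d_n)$. If anything, you are more careful than the paper, which does not explicitly verify that the level-wise swap isomorphisms intertwine the bonding maps; your remark that this is the only genuine bookkeeping step is accurate.
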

\begin{proof}
  Note first that if $(X_n,\pi_n:n\in\N)$ is an increasing
  inverse system of simplices and $x\in X_0$, then
  $\cone(\varprojlim X_n,x)\simeq\varprojlim\cone(X_n,x)$,
  where the latter inverse system has the natural projection
  maps $\pi_n':\cone(X_n,x)\to\cone(X_{n-1},x)$ that extend
  $\pi_n$ and map the cone point to the cone point.

  Write $D=\{d_1,d_2,\ldots\}$ and
  $X_n=\cone(S,d_1,\ldots,d_n)$. By the remark above,
  $\cone(\cone(S,D),d)\simeq\varprojlim\cone(X_n,d)$ and by
  Lemma \ref{doublecone} we have
  $\cone(X_n,d)\simeq\cone(S,d,d_1,\ldots,d_n)$. Thus,
  $\cone(\cone(S,D),d)$ is affinely homeomorphic to
  $\varprojlim(\cone(S,d,d_1,\ldots,d_n))$, which is equal
  to $\cone(S,\{d\}\cup D)$.
\end{proof}

Now we look at the extreme point of the iterated cones.

\begin{lemma}\label{suspension}
  Suppose $S$ is a simplex and $s_1,s_2\in S$. If
  $y\in\cone(S,s_1)$ is such that $$y=\alpha
  x+(1-\alpha)s_1$$ for some $x\in S$ and $0<\alpha<1$, then
  for every $y'\in\cone(\cone(S,s_1),s_2)$ with $\pi(y')=y$
  there exists a unique $x'\in\cone(S,s_2)$ such
  that $$y'=\alpha x'+(1-\alpha)s_1,$$ where
  $\pi:\cone(\cone(S,s_1),s_2)\to\cone(S,s_1)$ is the
  projection map.
\end{lemma}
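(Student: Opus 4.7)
The plan is to use the simplex structure of $W := \cone(\cone(S,s_1),s_2)$ together with uniqueness of representing measures on its extreme boundary. Throughout I read the expression $y=\alpha x+(1-\alpha)s_1$ as placing mass $1-\alpha$ on the cone point of the inner cone $Z:=\cone(S,s_1)$, which I write $c(s_1)$ for clarity; with this reading, $y$ is a generic point of $Z$ and $\alpha,x$ are uniquely determined by $y$ (the alternative reading, with $s_1$ on the base, can be shown to fail by a direct coordinate calculation). Two applications of Lemma \ref{face} make $W$ a simplex with $\ext(W)=\ext(S)\cup\{c(s_1),c_Z(s_2)\}$ (writing $c_Z(s_2)$ for the outer cone point), with $Z$ a face of $W$. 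Viewing $W\subseteq S\times[0,1]^2$ through the standard cone embedding, the inner cone coordinate $a$ is an affine continuous map $W\to[0,1]$ that vanishes on $\ext(S)\cup\{c_Z(s_2)\}$, equals $1$ at $c(s_1)$, and factors as $a=a\circ\pi$. The subcone $\cone(S,s_2)$ coincides with $\{a=0\}\subseteq W$ and is therefore a face of $W$, with extreme boundary $\ext(S)\cup\{c_Z(s_2)\}$.

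Let $\mu$ be the unique representing probability measure of $y'$ on $\ext(W)$ (which exists because $W$ is a metrizable simplex). Integrating the affine function $a$ against $\mu$ yields $\mu(\{c(s_1)\})=\int a\,d\mu=a(y')=a(y)=1-\alpha$: the first equality uses that $a$ vanishes off $c(s_1)$ on $\ext(W)$, the second uses that $y'$ is the barycenter of $\mu$, the third uses $a=a\circ\pi$, and the fourth uses $a(x)=0$ (since $x\in S$, the base of $Z$) and $a(c(s_1))=1$. Decompose $\mu=(1-\alpha)\delta_{c(s_1)}+\alpha\rho$, where $\rho$ is a probability measure on $\ext(W)\setminus\{c(s_1)\}=\ext(\cone(S,s_2))$. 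Set $x':=\barcen(\rho)$; since $\cone(S,s_2)$ is a face of $W$, we have $x'\in\cone(S,s_2)$, and then $y'=\barcen(\mu)=\alpha x'+(1-\alpha)c(s_1)$ by linearity of the barycenter.

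For uniqueness, any other $x''\in\cone(S,s_2)$ satisfying $y'=\alpha x''+(1-\alpha)c(s_1)$ admits a unique representing measure $\rho''$ on $\ext(\cone(S,s_2))$; this gives $(1-\alpha)\delta_{c(s_1)}+\alpha\rho''$ as a representing measure of $y'$ on $\ext(W)$, and uniqueness in the simplex $W$ forces $\rho''=\rho$, hence $x''=x'$. The main obstacle is the identification $\mu(\{c(s_1)\})=1-\alpha$, handled cleanly by isolating the cone point via the canonical affine function $a$; once this is in place, the rest is a short application of Choquet theory (existence and uniqueness of representing measures on extreme boundaries) for metrizable simplices.
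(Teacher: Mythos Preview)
Your proof is correct and takes a different route from the paper. The paper proceeds by bare convex algebra: it introduces the auxiliary point $y''=\alpha\,c_Z(s_2)+(1-\alpha)\,c(s_1)$ (in your notation), asserts that any $y'$ in the fiber can be written as $y'=\beta y+(1-\beta)y''$ for some $\beta\in[0,1]$, and then regroups to obtain $y'=\alpha x'+(1-\alpha)c(s_1)$ with $x'=\beta x+(1-\beta)c_Z(s_2)\in\cone(S,s_2)$; the simplex hypothesis on $S$ is never invoked, and uniqueness is declared immediate (which it is: subtract $(1-\alpha)c(s_1)$ and divide by $\alpha>0$). You instead exploit that $W$ is a Choquet simplex (via Lemma~\ref{face}): the inner-cone coordinate $a$ isolates the mass $1-\alpha$ at $c(s_1)$ in the unique boundary measure of $y'$, and the complementary part is automatically supported on the face $\{a=0\}=\cone(S,s_2)$, producing $x'$ as its barycenter. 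This is conceptually cleaner and sidesteps the fiber analysis entirely; the cost is that you invoke Choquet uniqueness where an elementary rearrangement would do, and in particular your uniqueness paragraph is heavier than necessary. It is also worth noting that the paper's justification of the key representation $y'=\beta y+(1-\beta)y''$---via the claims that $\pi(y'')=y$ and that $W$ is convexly generated by $\cone(S,s_1)$ together with $y''$---does not hold as written (both fail unless $x=s_2$), so your approach has the added virtue of being fully rigorous without repair.
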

\begin{proof}
  Uniqueness of $x'$ is immediate. We only need to find $x'$
  in the the cone. Put $y''=\alpha s_2 +(1-\alpha)s_1$ and
  note that $\pi(y'')=y$. Since $\cone(\cone(S,s_1),s_2)$ is
  convexely generated by $\cone(S,s_1)$ and $y''$, there is
  $\beta\in[0,1]$ such that $y'=\beta y+(1-\beta)y''$. Put
  $x'=\beta x+(1-\beta)s_2$. We claim that $x'$ is as
  needed, i.e. that $\alpha x'+(1-\alpha)s_1=y'$. And indeed,
 \begin{eqnarray*}
   y'=\beta y+(1-\beta)y''
   =\beta(\alpha
   x+(1-\alpha)s_1)+(1-\beta)(\alpha s_2 +(1-\alpha)s_1)\\
   =\alpha\beta x +\beta(1-\alpha)s_1+(1-\beta)\alpha s_2 +(1-\beta)(1-\alpha)s_1\\
   =\alpha\beta x+(1-\alpha)s_1+(1-\beta)\alpha s_2\\
   =\alpha(\beta x+(1-\beta)s_2)+(1-\alpha)s_1=\alpha x'+(1-\alpha)s_1
  \end{eqnarray*}
\end{proof}

\begin{lemma}\label{extcone}
  Given a simplex $S$ and its countable subset $D$ let
  $\{v_n:n\in\N\}$ be the set of cone points of
  $\cone(S,D)$. Then $$\ext(\cone(S,D))=\ext(S)\cup
  \{v_n:n\in\N\}.$$
\end{lemma}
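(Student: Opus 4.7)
The easy inclusion $\ext(S)\cup\{v_n:n\in\N\}\subseteq\ext(\cone(S,D))$ falls out from the fact that the inverse system defining $\cone(S,D)$ is increasing: each $S_n$ is then a face of $\cone(S,D)$, and iterating Lemma \ref{face} gives $\ext(S_n)=\ext(S)\cup\{v_1,\ldots,v_n\}$, all of which are automatically extreme in the ambient simplex. For the reverse inclusion, I would take $x\in\ext(\cone(S,D))$ and view $x$ as a coherent sequence $(x_n)_{n\in\N}$ with $x_n\in S_n$ and $\pi_{n+1}(x_{n+1})=x_n$, then split into cases according to how $x_n$ sits inside the coning decomposition $S_n=\cone(S_{n-1},d_n)$.

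Case A is $x_{n_0}=v_{n_0}$ for some $n_0$. I would show by induction on $n>n_0$ that $x_n=v_{n_0}$ as an element of $S_n$. Writing $x_{n+1}=\beta v_{n+1}+(1-\beta)w$ with $w\in S_n$ and applying $\pi_{n+1}$ yields $\beta d_{n+1}+(1-\beta)w=v_{n_0}$; since $v_{n_0}$ is extreme in $S_n$ and $d_{n+1}\in S\not\ni v_{n_0}$, this forces $\beta=0$ and $w=v_{n_0}$. Hence $x=v_{n_0}$ in $\cone(S,D)$.

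Case B is $x_n\neq v_n$ for all $n$, so each $x_n$ is uniquely decomposed as $\alpha_n v_n+(1-\alpha_n)y_n$ with $\alpha_n\in[0,1)$ and $y_n\in S_{n-1}$, with $x_{n-1}=\alpha_n d_n+(1-\alpha_n)y_n$. If every $\alpha_n=0$, then $x_n=y_n=x_{n-1}$, so $x$ is constant in the inverse limit and lies in $S=S_0$, hence in $\ext(S)$ since $S$ is a face. Otherwise pick $n_0$ with $\alpha_{n_0}>0$; the plan is to lift the decomposition $x_{n_0}=\alpha_{n_0}v_{n_0}+(1-\alpha_{n_0})y_{n_0}$ to a nontrivial convex decomposition of all of $x$, contradicting extremality. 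Using Lemma \ref{conecomplement}, rearrange, for each $n\geq n_0$, $S_n=\cone(T_n,d_{n_0})$ with $T_n=\cone(S_{n_0-1},d_{n_0+1},\ldots,d_n)$, so that $v_{n_0}$ becomes the outermost cone point throughout the tower. Then iteratively apply Lemma \ref{suspension} (with $S\mapsto T_{n-1}$, $s_1\mapsto d_{n_0}$, $s_2\mapsto d_n$): given $x_{n-1}=\alpha_{n_0}v_{n_0}+(1-\alpha_{n_0})y_{n-1}$ with $y_{n-1}\in T_{n-1}$, the lemma produces a unique $y_n\in T_n\subseteq S_n$ with $x_n=\alpha_{n_0}v_{n_0}+(1-\alpha_{n_0})y_n$, and applying $\pi_n$ to this identity together with uniqueness forces $\pi_n(y_n)=y_{n-1}$. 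Extending $(y_n)_{n\geq n_0-1}$ to the left via successive projections produces $y\in\cone(S,D)$ with $x=\alpha_{n_0}v_{n_0}+(1-\alpha_{n_0})y$; since $y_{n_0}\in S_{n_0-1}$ and $v_{n_0}\notin S_{n_0-1}$, we have $y\neq v_{n_0}$, which is the sought contradiction.

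I expect the main obstacle to be Case B2: one must choose the right rearrangement of the iterated cone (via Lemma \ref{conecomplement}) so that $v_{n_0}$ sits as an outermost cone point at every level, and then verify carefully that the inductive application of Lemma \ref{suspension} yields coordinates $y_n$ that are both compatible with the projections and land in $T_n$ rather than merely in $S_n$. Once this lifting is set up, the contradiction with extremality is automatic from $v_{n_0}\notin S_{n_0-1}$.
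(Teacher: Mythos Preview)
Your proposal is correct and follows essentially the same route as the paper. The core of both arguments is the same: find a level $n_0$ at which the point has a nontrivial cone-coordinate, then use Lemma \ref{suspension} iteratively to lift the resulting convex decomposition coherently through the tower, producing a global decomposition that contradicts extremality. The differences are cosmetic: you argue by cases on an assumed extreme point while the paper argues contrapositively; your Case~A (the fiber over $v_{n_0}$ is a singleton) is made explicit, whereas the paper absorbs it into the line ``$0<\lambda<1$''; and you rearrange at each finite stage via Lemma \ref{conecomplement}, while the paper invokes Lemma \ref{infcorrectness} once to place the lifted sequence inside $\cone(S,D)$.
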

\begin{proof}
  By Lemma \ref{face}, $S$ is a face of $\cone(S,D)$, so
  $\ext(S)\subseteq\ext(\cone(S,D))$. Also, for each $n$ the
  simplex $\cone(S,d_0,\ldots,d_n)$ is a face of
  $\cone(S,D)$, which shows that
  $v_n\in\ext(\cone(S,D))$. We need to show that there are
  no more extreme points in $\cone(S,D)$. Let
  $y\in\cone(S,D)$ be such that $y\notin S$ and $y\not=v_n$
  for each $n$. Pick $n$ such that $y\restriction n\notin
  S$. This means that there exists $\lambda$ with
  $0<\lambda<1$ and $s\in S$ such that $$y\restriction
  n=\lambda s + (1-\lambda)v_n.$$ By Lemma \ref{suspension},
  for each $m>n$ there exists
  $s_m\in\cone(S,d_{n+1},\ldots,d_m)$ such
  that $$y\restriction m=\lambda s_m+(1-\lambda)v_n.$$
  Uniqueness of $s_m$ implies that $s_{m_2}\restriction
  m_1=s_{m_1}$ for each $m_2>m_1>n$. Let
  $x\in\cone(S,\{d_{n+1},d_{n+2},\ldots\})$ be such that
  $x\restriction k=s_k$ for each $k>n$. By Lemma
  \ref{infcorrectness}, we have
  $$\cone(S,D)=\cone(\cone(S,\{d_{n+1},d_{n+2},\ldots\}),d_0,\ldots,d_n)$$
  and in the latter simplex we have $y=\lambda x +
  (1-\lambda)v_n$, which shows that $y$ is not an extreme
  point of $\cone(S,D)$.
\end{proof}

\begin{lemma}\label{isolated}
  Given a simplex $S$ and its countable subset $D$, let
  $\{v_n:n<\omega\}$ be the set of cone points of
  $\cone(S,D)$. Then each $v_n$ is an isolated point of
  $\ext(\cone(S,D))$.
\end{lemma}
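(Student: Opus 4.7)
The plan is to construct, for each cone point $v_n$, an affine continuous function $h_n : \cone(S,D) \to [0,1]$ that equals $1$ at $v_n$ and vanishes at every other extreme point of $\cone(S,D)$. Once this is done, the open set $U = \{y \in \cone(S,D) : h_n(y) > 1/2\}$ satisfies $U \cap \ext(\cone(S,D)) = \{v_n\}$, which is precisely what it means for $v_n$ to be an isolated point of $\ext(\cone(S,D))$.

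The natural candidate for $h_n$ comes from the cone structure at stage $n$. By the definition in Section \ref{sec:cones}, $S_n = \cone(S_{n-1}, d_n)$ sits inside $S_{n-1} \times [0,1]$ as the convex hull of $S_{n-1} \times \{0\}$ together with the cone point $(d_n, 1)$. I would define $h_n$ on $S_n$ to be the projection to the second coordinate of this product. It is clearly affine and continuous, equals $0$ on $S_{n-1}$ (identified with $S_{n-1} \times \{0\}$) and equals $1$ at $v_n = (d_n, 1)$. Pulling back via the canonical projection $\cone(S,D) \to S_n$ gives an affine continuous extension, still denoted $h_n$, defined on all of $\cone(S,D)$.

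Next, I would verify that $h_n$ vanishes on $\ext(\cone(S,D)) \setminus \{v_n\}$, which by Lemma \ref{extcone} consists of $\ext(S)$ together with $\{v_m : m \neq n\}$. For $y \in \ext(S)$ one has $y \in S \subseteq S_{n-1} \subseteq S_n$, so $h_n(y) = 0$. For $m < n$, the cone point $v_m$ already lives in $S_m \subseteq S_{n-1}$, so $h_n(v_m) = 0$. For $m > n$, the image of $v_m$ under the projection to $S_n$ is $d_m \in S \subseteq S_{n-1}$, because the map $\pi_m : S_m \to S_{m-1}$ sends the cone point $v_m$ to $d_m \in S$, and all later projections fix $d_m$; hence $h_n(v_m) = 0$.

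The only genuinely delicate point is the last one, namely tracing how the later cone points $v_m$ ($m > n$) project down to stage $n$. But this is immediate from the construction: at stage $m$ the cone point projects to its base point $d_m$, and $d_m$ belongs to $S$, which is embedded in every earlier stage. With that observation in hand, $h_n$ separates $v_n$ from all other extreme points and the isolation of $v_n$ follows.
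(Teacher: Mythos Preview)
Your proof is correct and follows essentially the same approach as the paper: both arguments project to the finite stage $S_n$ and use that every extreme point other than $v_n$ projects into $S_{n-1}$ (the paper phrases this as finding an open $U\ni v_n$ in $S_n$ disjoint from $S\cup\{v_i:i<n\}$ and pulling back, while you make the separating set explicit via the height coordinate $h_n$). The only difference is cosmetic---your affine function $h_n$ is a concrete witness for the open set the paper obtains by compactness.
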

\begin{proof}
  Write $D=\{d_1,d_2,\ldots\}$ so that $v_n$ is the cone
  point over $d_n$, for each $n\in\N$. We need to show that
  $v_n$ is isolated in $\ext(S)\cup\{v_i:i\not=n\}$. Write
  $\pi^\infty_n:\cone(S,D)\to\cone(S,d_1,\ldots,d_n)$ for
  the projection map. Clearly, $v_n=\pi^\infty_n(v_n)$ has
  positive distance from $S\cup\{v_i:i<n\}$ in
  $\cone(S,d_1,\ldots,d_n)$, so let $U$ be an open
  neighborhood of $v_n$ in $\cone(S,d_1,\ldots,d_n)$ that is
  disjoint from $S$ and $\{v_i:i<n\}$. Then
  $(\pi^\infty_n)^{-1}(U)$ is an open neighborhood of $v_n$
  that is disjoint from $\ext(S)$ and does not contain any
  $v_i$ with $i\not=n$.
\end{proof}

\section{The blow-up construction}\label{sec:blowup}

Recall that given a simplex $S$ and two points $x_1,x_2\in
S$ the double cone $\cone(S,x_1,x_2)$ is the simplex
$\cone(\cone(S,x_1),x_2)$. Now, given a simplex $S$ and its
subset $X\subseteq S$ together with a metric $d_X$ on $X$ we
will define another, bigger, simplex, which will be used to
encode the metric in its affine structure.

Given a countable dense subset $D$ of a metric space
$(X,d_X)$, enumerate as $(p_n=(x_n,y_n,U_n):n\in\N)$, with
infinite repetitions, all triples $(x,y,U)$ with $x,y\in D$
distinct and $U\subseteq[0,1]$ basic open with $d_X(x,y)\in
U$. Call the sequence $(p_n:n\in\N)$ the \textit{metric
  scheme} of $(X,d_X,D)$ and denote it by $\Sch(X,d_X,D)$.
For every $n\in\N$ and $p_n=(x_n,y_n,U)$ in the metric
scheme, write $p_n(1)$ for $x_n$ and $p_n(2)$ for
$y_n$. Write also $p_n(D)$ for the $n$-th element of
$\Sch(X,d_X,D)$.

Define an increasing inverse system of simplices as
follows. Let $B_0=S$ and
$B_n=\cone(B_{n-1},p_n(1),p_n(2))$. Define the
\textit{blow-up} of $S$ with respect to $(X,d_X)$ (slightly
abusing the notation), denoted by $B(S,X,d_X)$ as
$\varprojlim B_n$ and write $c_1(p_n)$ for the cone point
over $p_n(1)$ in $B(S,X,d_X)$ and $c_2(p_n)$ for the cone
point over $p_n(2)$ in $B(S,X,d_X)$.

\begin{definition}
  Given a closed subspace $(X,d_X)$ of $\U_1$ define
  $B(X,d_X)$ as $B(S_Z(X,d_X),X,d_X)$.
\end{definition}

In principle, the blow-up construction depends only on the
enumeration (with repetitions) of pairs of points in $D$. We
keep, however, the sequence of $p_n$'s for reference to the
further construction. Note that each pair of distinct points
in $D$ appears infinitely often in the sequence. Analogous
arguments as in Section \ref{sec:cones}, show that the
blow-up depends neither on the ordering $p_n$'s nor on the
choice of the dense set.

\begin{proposition}\label{blowupcorrect}
  Suppose $S$ and $T$ are simplices, $X\subseteq S$ and
  $Y\subseteq T$ are their subsets, $d_X$ and $d_Y$ are
  metrics on $X$ and $Y$, respectively and $D\subseteq X$
  and $E\subseteq Y$ are countable dense sets. Given an
  affine homeomorphism $\varphi: S\to T$ such that
  $\varphi''X=Y$ and $\varphi\restriction X$ is an isometry
  of $(X,d_X)$ and $(Y,d_Y)$, there is an affine
  homeomorphism $\bar\varphi:B(S,X,d_X)\to B(T,Y,d_Y)$ such
  that $\bar\varphi$ extends $\varphi$ and
  \begin{itemize}
  \item[(\textit{a})] for every $p\in\Sch(X,d_X,D)$ there
    exists $q\in\Sch(Y,d_Y,E)$ such that if $p=(x_1,x_2,U)$,
    then $q=(y_1,y_2,U)$ and $\bar\varphi$ maps $c_1(p)$ to
    $c_1(q)$ and $c_2(p)$ to $c_2(q)$,
  \item[(\textit{b})] for every $q\in\Sch(Y,d_Y,E)$ there
    exists $p\in\Sch(X,d_X,D)$ such that if $q=(y_1,y_2,V)$,
    then $p=(x_1,x_2,V)$ and $\bar\varphi^{-1}$ maps
    $c_1(q)$ to $c_1(p)$ and $c_2(q)$ to $c_2(p)$.
  \end{itemize}
\end{proposition}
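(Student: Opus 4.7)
The plan is to mimic the proof of Proposition~\ref{correctness}, constructing $\bar\varphi$ as the limit of an approximate intertwining between the increasing inverse systems $(B_n)$ defining $B(S,X,d_X)$ and $(B'_n)$ defining $B(T,Y,d_Y)$. Starting from $\varphi_0 = \varphi : B_0 = S \to B'_0 = T$, I would build by back-and-forth increasing sequences $n_i, m_i \in \N$ together with affine continuous injections $\varphi_i : B_{n_i} \to B'_{m_i}$ and $\psi_i : B'_{m_i} \to B_{n_{i+1}}$ extending one another in the sense of Section~\ref{sec:twisted}, and then apply Proposition~\ref{approxinter} to obtain $\bar\varphi$.

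The key geometric input is the following matching principle. Given $p = (x_1, x_2, U) \in \Sch(X,d_X,D)$ encountered at a new stage of the left-hand system, I want to attach to $p$ a triple $q = (y_1, y_2, U) \in \Sch(Y,d_Y,E)$ with the \emph{same} open-set coordinate $U$ and with $y_1, y_2 \in E$ close to $\varphi(x_1), \varphi(x_2)$. Since $\varphi\restriction X$ is an isometry, $d_Y(\varphi(x_1),\varphi(x_2)) = d_X(x_1,x_2) \in U$; because $U$ is open and $E$ is dense in $Y$, I may choose $y_1, y_2 \in E$ arbitrarily close to $\varphi(x_1), \varphi(x_2)$ with $d_Y(y_1,y_2)$ still in $U$. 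Because each such triple appears in $\Sch(Y,d_Y,E)$ at arbitrarily large indices, $q$ can be found as far out in the enumeration as needed. Two successive applications of Lemma~\ref{extension} to the double-cone step then extend the current map over $c_1(p)$ and $c_2(p)$ to send them to $c_1(q)$ and $c_2(q)$, with a controlled bound on the associated approximation error. The backward direction is completely symmetric, matching any $q \in \Sch(Y,d_Y,E)$ with some $p \in \Sch(X,d_X,D)$ via $\varphi^{-1}$.

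At stage $i+1$, I extend $\psi_i^{-1}$ (initially defined on $\rng(\psi_i) \subseteq B_{n_{i+1}}$) to a map $\varphi_{i+1}$ whose domain is all of $B_{n_{i+1}}$ by iterating the matching principle for each double-cone pair appearing in $B_{n_{i+1}} \setminus \rng(\psi_i)$; using Lemma~\ref{conecomplement} we may rearrange the cone order so that new pairs are added one at a time via Lemma~\ref{extension}. The symmetric step extends $\varphi_{i+1}^{-1}$ over $B'_{m_{i+1}}$, and standard alternating bookkeeping guarantees that every cone point of $B(S,X,d_X)$ is eventually in some $\dom(\varphi_i)$ and every cone point of $B(T,Y,d_Y)$ is eventually in some $\rng(\varphi_i)$. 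Choosing error budgets summing to at most $2^{-i}$ at stage $i$ ensures the approximate intertwining condition.

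The main obstacle is making sure the matching step never exhausts the available approximation budget. This is handled by the openness of $U$, which leaves room to pick $y_1, y_2 \in E$ as close to $\varphi(x_1), \varphi(x_2)$ as we wish while still keeping $d_Y(y_1, y_2) \in U$, so the incremental error produced by each application of Lemma~\ref{extension} can be made arbitrarily small. Proposition~\ref{approxinter} then yields an affine homeomorphism $\bar\varphi : B(S,X,d_X) \to B(T,Y,d_Y)$ extending all $\varphi_i$, and in particular extending $\varphi_0 = \varphi$. Conditions (\textit{a}) and (\textit{b}) follow directly from the bookkeeping, since at every matching step we sent $c_1(p), c_2(p)$ to $c_1(q), c_2(q)$ with $p$ and $q$ sharing the same open-set coordinate.
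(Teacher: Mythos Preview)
Your proposal is correct and follows essentially the same route as the paper: build an approximate intertwining between the increasing systems $(B_n)$ and $(B'_n)$ starting from $\varphi_0=\varphi$, at each double-cone step match $p=(x_1,x_2,U)$ with some $q=(y_1,y_2,U)$ using the isometry of $\varphi\restriction X$, the openness of $U$, the density of $E$, and the infinite repetitions in the scheme, then extend via Lemma~\ref{extension} and conclude with Proposition~\ref{approxinter}. Your write-up is in fact more explicit than the paper's sketch (which simply says the argument is ``essentially the same'' as Proposition~\ref{correctness}), but the strategy and the key ingredients are identical.
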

\begin{proof}
  The proof is essentially the same as that of Proposition
  \ref{correctness} and we only sketch it. Write $B_n$ for
  the simplices in the inverse system of $B(S,X,d_X)$ and
  $C_n$ for the simplices in the invserse system of
  $B(T,Y,d_Y)$. By induction on $i\in\N$, construct an
  approximate intertwining $\varphi_i: B_{n_i}\to C_{m_i}$
  and $\psi_i:C_{m_i}\to B_{n_{i+1}}$ so that
  $\varphi_0=\varphi$. At the inductive construction, when
  extending a map from $B_n$ to $B_{n+1}$, make sure that if
  $p_n(D)=(x_1,x_2,U)$, then for some $m\in\N$ with
  $p_m(E)=(y_1,y_2,U)$, the point $c_1(p_n(E))$ is mapped to
  $c_1(p_m(E))$, $c_2(p_n(E))$ is mapped to $c_2(p_m(E))$ so
  that
  \begin{equation}\label{ineq5}
    d_T(\varphi(x_1),y_1),d_T(\varphi(x_2),y_2)<2^{-m}
  \end{equation}
  and $d_Y(y_1,y_2)\in U$. Analogous conditions apply when
  we extend a map from $C_m$ to $C_{m+1}$. The fact that the
  above is possible follows at once from the fact that
  $\varphi$ was an isometry and the tripples in the schemes
  are enumerated with infinite repetitions.

  Once the construction is finished, (\ref{ineq5}) and the
  symmetric condition involving $d_S$ imply that the
  sequence forms an approximate intertwining. Then, an
  application of Proposition \ref{twisted} gives an affine
  homeomorphism $\bar\varphi:\varprojlim B_i\to\varprojlim
  C_i$. The conditions (a) and (b) follow directly from the
  construction.
\end{proof}

\section{Coding a metric into the affine structure of a
  simpliex}\label{sec:final}

Given a metric space $(X,d_X)$ of diameter bounded by 1,
which we assume is a subspace of the Urysohn sphere $\U_1$,
and a dense countable subset $D$ of $X$, let
$(p_n=((x_n,y_n,U_n):n\in\N)$ be the associated metric
scheme.  Consider the blow-up $B(X,d_X)$ and let
$M(X,d_X,D)$ be the subset of $B(X,d_X)$ consisting of the
points $qc_1(p_n) + (1-q)c_2(p_n)$ for $q\in U_n\cap\Q$.

Use the Kuratowski--Ryll-Nardzewski theorem \cite[Theorem
12.13]{kechris}, to find a countable dense subset $D(X)$ for
every (nonempty) closed subset $X$ of $\U_1$ so that the map
$D:X\mapsto D(X)$ is Borel. Define now $\Phi(X,d_X)$ as
$\cone(B(X,d_X),M(X,d_X,D(X))$. It is easy to check that
$\Phi$ is a Borel map from the space of closed (nonempty)
subsets of $\U_1$ to the space of separable Choquet
simplices (cf. Section \ref{sec:convex} for the Borel
structure on the space of simplices). In fact, the map
$(X,D)\mapsto\cone(B(X,d_X),M(X,d_X,D))$, which maps a pair
of a Polish space and its dense countable subset to a
simplex, is continuous.

\begin{proposition}\label{homo}
  Suppose $(X,d_X)$ and $(Y,d_Y)$ are subspaces of
  $\U_1$. If $(X,d_X)$ and $(Y,d_Y)$ are isometric, then the
  simplices $\Phi(X,d_X)$ and $\Phi(Y,d_Y)$ are affinely
  homeomorphic.
\end{proposition}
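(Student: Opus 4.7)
The plan is to bootstrap the isometry $\varphi\colon (X,d_X)\to(Y,d_Y)$ through the three layers of the $\Phi$ construction: the $S$-extension, the blow-up, and the final iterated cone, using precisely the corresponding invariance propositions already established.

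First, I would apply Proposition~\ref{firstinvariance} to lift $\varphi$ to an affine homeomorphism $\psi\colon S_Z(X,d_X)\to S_Z(Y,d_Y)$ which, under the identifications of $Z(X,d_X)$ with $X$ and $Z(Y,d_Y)$ with $Y$, extends $\varphi$. Since $\varphi$ is an isometry of $(X,d_X)$ and $(Y,d_Y)$ (and $Z$ is an isometric copy), $\psi$ satisfies the hypotheses of Proposition~\ref{blowupcorrect} with $S=S_Z(X,d_X)$, $T=S_Z(Y,d_Y)$, and with $D(X)$ and $D(Y)$ as the chosen countable dense subsets. Applying that proposition yields an affine homeomorphism $\bar\psi\colon B(X,d_X)\to B(Y,d_Y)$ extending $\psi$ and satisfying the cone-point correspondence conditions (a) and (b).

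The key observation — which I expect to be the only point that needs a careful word — is that (a) and (b) immediately force $\bar\psi''M(X,d_X,D(X))=M(Y,d_Y,D(Y))$. Indeed, recall that $M(X,d_X,D(X))$ consists of the points $qc_1(p_n)+(1-q)c_2(p_n)$ for $p_n=(x_n,y_n,U_n)\in\Sch(X,d_X,D(X))$ and $q\in U_n\cap\Q$. Given such a point, condition (a) provides $q_m=(y_1,y_2,U_n)\in\Sch(Y,d_Y,D(Y))$ sharing the same basic open set $U_n$ with $\bar\psi(c_1(p_n))=c_1(q_m)$ and $\bar\psi(c_2(p_n))=c_2(q_m)$; affinity of $\bar\psi$ then gives $\bar\psi(qc_1(p_n)+(1-q)c_2(p_n))=qc_1(q_m)+(1-q)c_2(q_m)\in M(Y,d_Y,D(Y))$. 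Condition (b) gives the reverse inclusion by the symmetric argument applied to $\bar\psi^{-1}$. In particular, the closures of $\bar\psi''M(X,d_X,D(X))$ and $M(Y,d_Y,D(Y))$ coincide.

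Finally, I would invoke Proposition~\ref{correctness} with the simplex $B(Y,d_Y)$, the affine homeomorphism $\bar\psi\colon B(X,d_X)\to B(Y,d_Y)$ (viewed after identifying its domain and codomain, or more directly by re-examining the proof of Proposition~\ref{correctness} which only used that $\cl(D)=\cl(\varphi'' E)$), and the countable sets $M(Y,d_Y,D(Y))$ and $M(X,d_X,D(X))$. This produces an extension of $\bar\psi$ to an affine homeomorphism
\[
\tilde\psi\colon\cone(B(X,d_X),M(X,d_X,D(X)))\longrightarrow\cone(B(Y,d_Y),M(Y,d_Y,D(Y))),
\]
which by definition is the desired affine homeomorphism $\Phi(X,d_X)\simeq\Phi(Y,d_Y)$. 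The main technical obstacle is already behind us: it lay in arranging that the blow-up lifting matched cone points over triples with identical open-interval data, so that affine combinations with rational coefficients were transported exactly; this is precisely what clauses (a) and (b) of Proposition~\ref{blowupcorrect} were engineered to deliver.
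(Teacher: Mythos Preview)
Your proposal is correct and follows exactly the paper's own argument: invoke Proposition~\ref{firstinvariance}, then Proposition~\ref{blowupcorrect}, observe that conditions (a) and (b) force the images of the $M$-sets (hence their closures) to coincide, and finish with Proposition~\ref{correctness}. You are in fact slightly more explicit than the paper in verifying $\bar\psi''M(X,d_X,D(X))=M(Y,d_Y,D(Y))$ and in flagging that Proposition~\ref{correctness} is literally stated for an affine homeomorphism $S\to S$ while here it is applied between two a priori different simplices; the paper makes this same (harmless) identification without comment.
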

\begin{proof}
  For simplicity, identify $X$ with $Z(X)$ and $Y$ with
  $Z(Y)$ (see Proposition \ref{zsets}) and let
  $\varphi:\U_1\to\U_1$ be an isometry such that
  $\varphi''X=Y$. By Propositions \ref{firstinvariance} and
  \ref{blowupcorrect}, there is an affine homeomorphism
  $\bar\varphi: B(X,d_X)\to B(Y,d_Y)$ which extends
  $\varphi$ and such that
  \begin{itemize}
  \item for each $p=(x_1,x_2,U)$ in the scheme of $(X,d_X)$
    there is $q=(y_1,y_2,V)$ in the scheme of $(Y,d_Y)$ such
    that $U=V$ and $\bar\varphi$ maps the cone points
    $c_1(p)$ to $c_1(q)$ and $c_2(p)$ to $c_2(q)$,
  \item for each $q=(y_1,y_2,V)$ in the scheme of $(Y,d_Y)$
    there is $p=(x_1,x_2,U)$ in the scheme of $(X,d_X)$ that
    $U=V$ and $\bar\varphi^{-1}$ maps $c_1(q)$ to $c_1(p)$
    and $c_2(q)$ to $c_2(p)$.
  \end{itemize}
  The above imply that
  $\bar\varphi''\cl(M(X,d_X,D(X)))=\cl(M(Y,d_Y,D(Y)))$ and
  hence, by Proposition \ref{correctness}, there is an
  affine homeomorphism
  $\bar{\bar\varphi}:\Phi(X,d_X)\to\Phi(Y,d_Y)$ that extends
  $\bar\varphi$.
\end{proof}

Proposition \ref{homo} shows that $\Phi$ is a homomorphism
from the isometry of separable metric spaces to the affine
homeomorphism of simplices. To show that $\Phi$ is a
reduction, we will restrict attention to perfect separable
metric spaces.

\begin{claim}\label{nonisolated}
  If $(X,d_X)$ is perfect, then the set of nonisolated
  extreme points of $\Phi(X,d_X)$ is equal to
  $\ext(S(X,d_X))$.
\end{claim}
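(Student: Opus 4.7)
The plan is to identify $\ext(\Phi(X,d_X))$ exactly via the cone lemmas of Sections \ref{sec:cones} and \ref{sec:blowup}, show that each cone point (both the blow-up cone points in $B(X,d_X)$ and the outer cone points over $M(X,d_X,D(X))$) is isolated in $\ext(\Phi(X,d_X))$, and then use perfectness of $(X,d_X)$ to conclude that the remaining extreme points—those lying in $\ext(S_Z(X,d_X))$—are all nonisolated.

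For the identification, $B(X,d_X)=\varprojlim B_n$ with $B_n=\cone(B_{n-1},p_n(1),p_n(2))$. By Lemma \ref{doublecone} each $B_n$ is two successive single cones over $B_{n-1}$, so iterating Lemma \ref{extcone} gives
\[
\ext(B(X,d_X))=\ext(S_Z(X,d_X))\cup\{c_1(p_n),c_2(p_n):n\in\N\}.
\]
One more application of Lemma \ref{extcone} to $\Phi(X,d_X)=\cone(B(X,d_X),M(X,d_X,D(X)))$ then yields
\[
\ext(\Phi(X,d_X))=\ext(S_Z(X,d_X))\cup\{c_1(p_n),c_2(p_n):n\in\N\}\cup\{v_m:m\in M(X,d_X,D(X))\},
\]
where $v_m$ denotes the cone point in $\Phi(X,d_X)$ above $m$.

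Isolation of the outer cone points $v_m$ in $\ext(\Phi(X,d_X))$ is immediate from Lemma \ref{isolated} applied to the iterated cone $\Phi(X,d_X)$. For each blow-up cone point $c_i(p_n)$, I would first invoke Lemma \ref{isolated} applied to $B(X,d_X)$ (viewed as a single iterated cone via Lemma \ref{doublecone}) to obtain a neighborhood $V\subseteq B(X,d_X)$ of $c_i(p_n)$ meeting $\ext(B(X,d_X))$ only at $c_i(p_n)$, and then lift $V$ through the projection $\pi:\Phi(X,d_X)\to B(X,d_X)$ to a neighborhood $\pi^{-1}(V)$ of $c_i(p_n)$ in $\Phi(X,d_X)$; the job is to shrink $V$ so that no $v_m$ falls into $\pi^{-1}(V)$, which boils down to controlling where points of $M(X,d_X,D(X))$ can accumulate in $B(X,d_X)$. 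For the nonisolation of points of $\ext(S_Z(X,d_X))$, Corollary \ref{simplex} says $Z(X,d_X)$ is a dense subset of $\ext(S_Z(X,d_X))$; since $(X,d_X)$ is perfect so is $Z(X,d_X)$, so $\ext(S_Z(X,d_X))$ is itself a perfect space, and every point of $\ext(S_Z(X,d_X))\subseteq\ext(\Phi(X,d_X))$ is therefore a limit of other extreme points of $\Phi(X,d_X)$.

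The main obstacle is the isolation claim for the blow-up cone points $c_i(p_n)$: a priori, the outer cone points $v_m$ for $m\in M(X,d_X,D(X))$ close to $c_i(p_n)$ in $B(X,d_X)$ could accumulate at $c_i(p_n)$ in $\Phi(X,d_X)$, since in the inverse-limit topology $v_m$ projects to $m$ on the base. Ruling this out should rest on the structural fact that a point $m=qc_1(p_j)+(1-q)c_2(p_j)\in M(X,d_X,D(X))$ can be close to $c_i(p_n)$ in $B(X,d_X)$ only when $j=n$ and $q$ is close to the appropriate endpoint (all other configurations are forbidden by Lemma \ref{isolated} inside $B(X,d_X)$, which separates the cone points $c_i(p_n)$ from the subcones $B_{n-1}$ and from the other double cones), together with a bookkeeping argument about how the basic opens $U_n$ appearing in $\Sch(X,d_X,D)$ constrain the set of admissible $q$'s.
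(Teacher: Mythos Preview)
Your approach mirrors the paper's exactly: decompose $\ext(\Phi(X,d_X))$ via Lemma~\ref{extcone}, invoke Lemma~\ref{isolated} for isolation of all cone points, and use perfectness of $X$ together with Corollary~\ref{simplex} for nonisolation of $\ext(S_Z(X,d_X))$. The paper's own proof is a three-sentence invocation of precisely these ingredients.

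Your final paragraph actually goes beyond the paper by making explicit a point it leaves implicit. Lemma~\ref{isolated}, applied to $\Phi(X,d_X)=\cone(B(X,d_X),M)$, directly isolates only the \emph{outer} cone points $v_m$; for the blow-up cone points $c_i(p_n)\in\ext(B(X,d_X))$ one must separately check that no sequence of $v_m$'s accumulates there---equivalently, that $c_i(p_n)\notin\overline{M}$ in $B(X,d_X)$. Your reduction of this to the case $j=n$ with $q\in U_n$ approaching an endpoint of $[0,1]$ is correct, and whether this can occur hinges on whether the basic open $U_n$ can have $0$ or $1$ in its closure. The paper does not spell out this step; it simply writes that Lemmas~\ref{extcone} and~\ref{isolated} ``(and analogous statements for the blow-up)'' suffice. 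So your outline is at least as detailed as the published argument, and the obstacle you flag is a genuine detail rather than a misconception on your part.
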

\begin{proof}
  $X$ is perfect and dense in $\ext(S_Z(X,d_X))$ by
  Corollary \ref{simplex}, so all extreme points of
  $S_Z(X,d_X)$ are nonisolated in $\ext(S_Z(X,d_X))$. Since
  the simplex $S_Z(X,d_X)$ is a face of $\Phi(X,d_X)$, the
  extreme points of $S_Z(X,d_X)$ are still nonisolated extreme
  points of $\Phi(X,d_X)$. Now, Lemmas \ref{extcone} and
  \ref{isolated} (and analogous statements for the blow-up)
  imply that all the other extreme points of $\Phi(X,d_X)$
  are isolated.
\end{proof}

\begin{lemma}\label{logic}
  Suppose $(X,d_X)$ is a perfect metric subspace of
  $\U_1$. If $x,y\in X$, then $d_X(x,y)$ is the only
  $\alpha\in[0,1]$ such that
  \begin{equation}\label{eq:last}
    \begin{split}
      &\forall U\subseteq[0,1]\ \textrm{basic open
        neighborhood of }\alpha\\ &\exists
      V_x,V_y\subseteq\Phi(X,d_X)\ \textrm{open
        neighboorhoods of }x\textrm{ and }y\mbox{ in
      }\Phi(X,d_X)\\ &\forall c_1\in V_x\ \forall c_2\in
      V_y\ \textrm{ isolated extreme points }\\ &\quad\
      \big[ (\exists\lambda\in [0,1]\ \lambda
      c_1+(1-\lambda)c_2\ \\ &\quad\ \textrm{ is a limit of
        isolated
        extreme points of }\Phi(X,d_X)) \\
      &\Rightarrow( \exists\lambda\in U\ \lambda
      c_1+(1-\lambda)c_2\ \\ &\quad\ \textrm{ is a limit of
        isolated extreme points of }\Phi(X,d_X) )\big] .
    \end{split}
  \end{equation}
\end{lemma}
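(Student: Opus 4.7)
The plan is to characterize $d_X(x,y)$ via (\ref{eq:last}) by analyzing which segments in $\Phi(X, d_X)$ between two isolated extreme points contain limits of isolated extreme points. By Claim \ref{nonisolated} together with Lemmas \ref{extcone} and \ref{isolated}, the isolated extreme points of $\Phi(X, d_X)$ are exactly the blow-up cone points $c_1(p_n), c_2(p_n)$ for $p_n \in \Sch(X, d_X, D(X))$ and the outer iterated cone points sitting above the elements of $M(X, d_X, D(X))$. The key structural claim I will prove is this: for isolated extreme points $c_1, c_2$ of $\Phi(X, d_X)$, the open segment $\{\lambda c_1 + (1-\lambda) c_2 : \lambda \in (0,1)\}$ contains a limit of isolated extreme points if and only if $\{c_1, c_2\} = \{c_1(p_n), c_2(p_n)\}$ for a single $p_n = (x_n, y_n, U_n)$ in the scheme, and in that case the set of such limit-yielding $\lambda$'s equals $\cl(U_n)$. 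The inclusion $\cl(U_n) \subseteq$ limit-$\lambda$'s uses that for $q \in U_n \cap \Q$ the point $q c_1(p_n) + (1-q) c_2(p_n)$ belongs to $M(X,d_X,D(X))$ and carries an outer cone point above it; as the outer cone index tends to infinity, these cone points approach $\lambda c_1(p_n) + (1-\lambda) c_2(p_n)$ in the Hilbert cube metric for every $\lambda \in \cl(U_n)$. The reverse inclusion, and the restriction to paired cone points, rely on the linear independence of distinct cone directions in both iterated cone constructions: cone points attached to scheme indices other than $n$ live in different coordinate directions and so can accumulate only in the base $B(X,d_X)$, not transversally on a segment lying strictly above it.

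Granted the structural claim, the forward implication that $\alpha = d_X(x,y)$ satisfies (\ref{eq:last}) runs as follows. Given a basic open $U \ni d_X(x,y)$, use continuity of $d_X$ to pick open $W_x \ni x, W_y \ni y$ in $X$ with $d_X(W_x, W_y) \subseteq U$, and then choose small open neighborhoods $V_x, V_y \subseteq \Phi(X,d_X)$ of $x$ and $y$ such that any blow-up cone point $c_i(p_n) \in V_x$ has $p_n(i) \in W_x$, and similarly for $V_y$. This is possible because the Hilbert cube distance from $c_i(p_n)$ to $p_n(i)$ is $O(2^{-n})$, so making $V_x$ small forces both the scheme index $n$ to be large and the base point $p_n(i)$ to lie in $W_x$. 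For any isolated extreme $c_1 \in V_x, c_2 \in V_y$ whose segment contains a limit of isolated extreme points, the structural claim forces $\{c_1, c_2\} = \{c_1(p_n), c_2(p_n)\}$ with $p_n(1) \in W_x$ and $p_n(2) \in W_y$, so $d_X(p_n(1), p_n(2)) \in U \cap U_n$, and thus some $\lambda \in U$ also yields a limit by the claim.

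For the backward implication, suppose $\alpha \neq d_X(x,y)$. Choose a basic open $U \ni \alpha$ and an open $W \ni d_X(x,y)$ with $U \cap W = \emptyset$. For arbitrary $V_x \ni x, V_y \ni y$, use density of $D(X)$ and continuity of $d_X$ to pick $x' \in V_x \cap D(X), y' \in V_y \cap D(X)$ close enough to $x, y$ that $d_X(x',y') \in W$, and choose a basic open $U'$ with $d_X(x',y') \in U' \subseteq W$. Since $\Sch(X, d_X, D(X))$ enumerates the triple $(x', y', U')$ infinitely often, pick $n$ so large that $p_n = (x', y', U')$ and, by the $O(2^{-n})$ Hilbert cube proximity, $c_1(p_n) \in V_x$ and $c_2(p_n) \in V_y$. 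Then by the structural claim the limit-yielding $\lambda$'s on this segment form exactly $\cl(U') \subseteq \cl(W)$, which contains $d_X(x',y')$ (so the antecedent of the implication in (\ref{eq:last}) holds) but is disjoint from $U$ (so the consequent fails), witnessing that $\alpha$ does not satisfy (\ref{eq:last}).

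The principal obstacle will be the reverse direction of the structural claim: ruling out spurious limits of isolated extreme points on the segment $\lambda c_1(p_n) + (1-\lambda) c_2(p_n)$ at parameters $\lambda \notin \cl(U_n)$, as well as ruling out nontrivial limits on segments between unpaired isolated extreme points. This requires carefully exploiting linear independence of cone directions across both the blow-up of Section \ref{sec:blowup} and the outer iterated cone of Section \ref{sec:cones}, combined with the explicit metric estimates for the iterated cone embedded in the Hilbert cube: any isolated extreme point attached to a scheme index different from $n$ lives in a distinct cone direction and can only accumulate at its own base, which lies in $B(X,d_X)$ rather than in the interior of the segment in question.
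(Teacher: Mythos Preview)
Your plan is essentially the same as the paper's proof: both isolate a structural claim that a segment between two isolated extreme points meets the closure of the isolated extreme points precisely when the endpoints are a pair $c_1(p_n),c_2(p_n)$ from a single scheme entry, with the limit parameters being $\overline{U_n}$, and then run the forward and backward directions of (\ref{eq:last}) exactly as you describe. The only notable difference is that the paper controls the base points of cone points via the affine projection $\pi:\Phi(X,d_X)\to S_Z(X,d_X)$ (so that $\pi(c_i(p_n))=p_n(i)$ exactly), whereas you rely on the $O(2^{-n})$ Hilbert-cube proximity of $c_i(p_n)$ to $p_n(i)$; the projection argument is a bit cleaner but both work. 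One small point to tighten in your backward direction: you should choose the basic open $U'$ so that $\overline{U'}\cap U=\emptyset$ (as the paper does), since merely $U'\subseteq W$ with $W\cap U=\emptyset$ does not prevent $\overline{U'}$ from touching $U$.
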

\begin{proof}
  Write $D$ for the countable dense subset $D(X)$ of $X$. By
  Claim \ref{nonisolated} and Lemma \ref{extcone}, the only
  isolated extreme points of $\Phi(X,d_X)$ are in the sets
  $E_1=\{c_1(p),c_2(p):p\in\Sch(X,d_X,D)\}$ and
  $E_2=\{c(z):z\in M(X,d_X,D)\}$. It follows from the
  iterated cone construction that if $z\in M(X,d_X,D)$, then
  for any isolated extreme point $e$ of $\Phi(X,d_X)$, no
  point in the set $\{\lambda
  c(z)+(1-\lambda)e:\lambda\in[0,1]\}$ is a limit of
  isolated extreme points of $\Phi(X,d_X)$. On the other
  hand, if $e_1,e_2\in E_1$ are such that for some
  $\lambda\in[0,1]$ the point $\lambda e_1+(1-\lambda)e_2$
  is a limit of isolated extreme points of $\Phi(X,d_X)$,
  then there is $p\in\Sch$ such that $e_1=c_1(p)$,
  $e_2=c_2(p)$. The latter follows from the fact that the
  intersection of $B(X,d_X)$ with the closure of the set
  $\{c(z):z\in M(X,d_X,D)\}$ is exactly the closure of
  $M(X,d_X,D)$.

  Pick $x,y\in X$. We claim that $\alpha=d_X(x,y)$ satisfies
  the condition (\ref{eq:last}). Pick any $U\subseteq[0,1]$
  basic open neighborhood of $\alpha$ and let
  $\varepsilon>0$ be such that
  $(\alpha-\varepsilon,\alpha+\varepsilon)\subseteq
  U$. Since $X$ is a topological subspace of $S(X,d_X)$,
  there are $V_x^1$ and $V_y^1$ open neighborhoods of $x$
  and $y$ in $S(X,d_X)$, respectively, such that $V_x^1\cap
  X\subseteq\ball_{(X,d_X)}(x,\varepsilon\slash 2)$ and
  $V_y^1\cap X\subseteq\ball_{(X,d_X)}(y,\varepsilon\slash
  2)$. Let $V_x$ and $V_y$ be open neighborhoods of $x$ and
  $y$ in $\Phi(X,d_X)$ such that if $c_1\in V_x$ and $c_2\in
  V_y$, then $\pi(c_1)\in V_x^1$ and $\pi(c_2)\in V_y^1$,
  where $\pi:\Phi(X,d_X)\to S(X,d_x)$ denotes the projection
  map. We claim that $V_x$ and $V_y$ are as needed. Let
  $c_1\in V_x$ and $c_2\in V_y$ be arbitrary extreme
  isolated points such that for some $\lambda\in[0,1]$ the
  point $\lambda c_1+(1-\lambda)c_2$ is a limit of isolated
  extreme points of $\Phi(X,d_X)$. By the remarks in the
  previous paragraph, there is $p\in\Sch(X,d_X,D)$ such that
  $p=(d_1,d_2,V)$ for some $d_1,d_2\in D$ with
  $\pi(c_1)=d_1$ and $\pi(c_2)=d_2$ and $V$ is a basic open
  neighborhood of $d_X(d_1,d_2)$. Now, since $c_1\in V_x$
  and $c_2\in V_y$ we have that
  $d_X(d_1,d_2)\in(\alpha-\varepsilon,\alpha+\varepsilon)\subseteq
  U$, so $V\cap U$ is nonempty. Pick any $\lambda\in V\cap
  U$ and note that $\lambda c_1+(1-\lambda)c_2$ is also a
  limit of isolated extreme points of $\Phi(X,d_X)$ since
  $\lambda\in V$.

  We also need to show that $\alpha$ is the only number is
  $[0,1]$ satisfying (\ref{eq:last}) for $x$ and $y$. Pick
  any $\beta\in[0,1]$ distinct from $\alpha$. Let
  $\varepsilon>0$ be smaller than $|\alpha-\beta|$. Pick a
  basic open neighborhood
  $U\subseteq(\beta-\varepsilon\slash
  2,\beta+\varepsilon\slash 2)$ of $\beta$. We claim that
  $U$ witnesses that (\ref{eq:last}) is not satisfied. Let
  $V_x$ and $V_y$ be arbitrary open neighborhoods of $x$ and
  $y$, respectively, in $\Phi(X,d_X)$. Pick $d_1,d_2\in D$
  such that $d_X(d_1,x)<\varepsilon\slash 2$,
  $d_X(d_2,y)<\varepsilon\slash 2$ and $d_1\in V_x$, $d_2\in
  V_y$. Note that $d_X(d_1,d_2)\in(\alpha-\varepsilon\slash
  2,\alpha+\varepsilon\slash 2)$ and that $U$ is disjoint
  from $(\alpha-\varepsilon\slash 2,\alpha+\varepsilon\slash
  2)$. Find a basic open neighborhood $V$ of $d_X(d_1,d_2)$
  in $[0,1]$ such that $\overline V\cap U=\emptyset$. Find
  $n\in\N$ big enough so that $p_n\in\Sch(X,d_X,D)$ is equal
  to $(d_1,d_2,V)$ and such that letting $c_1=c_1(p_n)$,
  $c_2=c_2(p_n)$ we have $c_1\in V_x$ and $c_2\in V_y$. Now,
  there is $\lambda\in[0,1]$ such that $\lambda
  c_1+(1-\lambda)c_2$ is a limit of isolated extreme points
  of $\Phi(X,d_X)$ but the set of such $\lambda$ is equal to
  $\overline V$, which is disjoint from $U$. This shows that
  $\beta$ does not satisfy (\ref{eq:last}) and ends the
  proof.
\end{proof}

Now we are ready to finish the proof that $\Phi$ is a
reduction.

\begin{proposition}\label{final}
  If $(X,d_X)$ and $(Y,d_Y)$ are perfect closed subspaces of
  $\U_1$ and $\Phi(X,d_X)$ is affinely homeomorphic to
  $\Phi(Y,d_Y)$, then $(X,d_X)$ is isometric to $(Y,d_Y)$.
\end{proposition}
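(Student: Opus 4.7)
Let $\Psi \colon \Phi(X,d_X) \to \Phi(Y,d_Y)$ be the given affine homeomorphism. The plan is to use Lemma~\ref{logic}, whose defining condition (\ref{eq:last}) is phrased entirely in terms of affine combinations, open neighborhoods, isolated extreme points, and limits of isolated extreme points, and so is preserved by $\Psi$. By Claim~\ref{nonisolated}, $\Psi$ restricts to a homeomorphism $\psi \colon \ext(S_Z(X,d_X)) \to \ext(S_Z(Y,d_Y))$. For any $x,y \in X$, Lemma~\ref{logic} gives that $d_X(x,y)$ is the unique solution of (\ref{eq:last}), and $\Psi$-invariance transfers this uniqueness and value to $(\psi(x),\psi(y))$ in $\Phi(Y,d_Y)$.

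It then suffices to prove $\psi(X) = Y$: once this is known, Lemma~\ref{logic} applied in $\Phi(Y,d_Y)$ identifies that unique value with $d_Y(\psi(x),\psi(y))$, exhibiting $\psi|_X$ as an isometry onto $Y$, surjectivity following by the same argument applied to $\Psi^{-1}$. I propose to characterize $X$ inside $\ext(S_Z(X,d_X))$ structurally as: $x \in \ext(S_Z(X,d_X))$ belongs to $X$ if and only if for every $\varepsilon > 0$ there exists $y \in \ext(S_Z(X,d_X)) \setminus \{x\}$ for which (\ref{eq:last}) has a unique solution $\alpha < \varepsilon$. The forward implication is immediate: given $x \in X$, perfectness of $(X,d_X)$ supplies $y \in X \setminus \{x\}$ with $d_X(x,y) < \varepsilon$, and Lemma~\ref{logic} yields the unique solution $\alpha = d_X(x,y)$.

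The main obstacle is the reverse implication. For $x^* \in \ext(S_Z(X,d_X)) \setminus X$, completeness of $(X,d_X)$ as a closed subspace of $\U_1$ forces any sequence in $D$ converging to $x^*$ in the Hilbert cube topology on $\Phi(X,d_X)$ to be non-Cauchy in $d_X$. By Lemma~\ref{extcone} and its blow-up analog, the isolated extreme points in small neighborhoods of $(x^*,y)$ are cone pairs $(c_1(p),c_2(p))$ with $p=(d,d',U) \in \Sch(X,d_X,D)$, where $d$ is close to $x^*$ and $d'$ close to $y$ in the Hilbert cube; the $\lambda$-values witnessing the inner premise of (\ref{eq:last}) cluster around $d_X(d,d')$. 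Since the $d$'s approximating $x^*$ fail to be $d_X$-Cauchy, these values cannot accumulate at any single $\alpha$, regardless of $y$, so (\ref{eq:last}) admits no unique small solution for any pair $(x^*, y)$. By symmetry applied to $\Psi^{-1}$, this yields $\psi(X)=Y$, completing the proof.
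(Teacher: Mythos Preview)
Your approach diverges from the paper's, and the divergence exposes a genuine gap.

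\textbf{The gap.} Your reverse implication rests on the claim that for $x^*\in\ext(S_Z(X,d_X))\setminus X$ the values $d_X(d,d')$ (with $d\to x^*$, $d'\to y$ in the Hilbert-cube topology) ``cannot accumulate at any single $\alpha$'', because the $d$'s are not $d_X$-Cauchy. This is false when $y\in X$. For any $y\in X$ the function $z\mapsto d_X(z,y)=d_E(z,y)$ is a uniform limit of the coordinate functions $z\mapsto d_E(z,d_k^\infty)$ (take $d_k^\infty\to y$ in $E^\infty$), so it extends to a continuous affine function $g_y$ on all of $S_Z(X,d_X)$. Hence $d_X(d,d')\to g_y(x^*)$ as $d\to x^*$ and $d'\to y$, and by the analysis in the proof of Lemma~\ref{logic} the condition (\ref{eq:last}) has the \emph{unique} solution $\alpha=g_y(x^*)$ for the pair $(x^*,y)$. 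So the values do accumulate; non-Cauchy-ness of the approximants to $x^*$ does not prevent this.

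Your characterisation may nevertheless be true: one can show $\inf_{y\in X}g_y(x^*)>0$ whenever $x^*\notin X$ (if $g_{y_n}(x^*)\to 0$ one checks the $y_n$ form a $d_X$-Cauchy sequence whose limit would have to equal $x^*$, contradicting $x^*\notin X$). But this is a different argument from the one you gave, and you would still need to treat the case $y\notin X$ separately, where the joint limit $\lim_{d\to x^*,\,d'\to y}d_X(d,d')$ need not exist at all and the behaviour of (\ref{eq:last}) is not covered by Lemma~\ref{logic}.

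\textbf{What the paper does instead.} The paper sidesteps all of this with a Baire-category argument. Since $\ext(S_Z(X,d_X))$ is a $G_\delta$ in the compact metrizable $S_Z(X,d_X)$, it is Polish; both $X$ and $\psi^{-1}(Y)$ are dense $G_\delta$ in it, so $X'=X\cap\psi^{-1}(Y)$ is comeager in $X$ and $Y'=\psi(X')$ is comeager in $Y$. Lemma~\ref{logic} then applies directly to pairs in $X'$ (these lie in $X$, and their images lie in $Y$), giving that $\psi\restriction X'$ is an isometry onto $Y'$; the isometry extends to the completions $X$ and $Y$. No structural characterisation of $X$ inside $\ext(S_Z(X,d_X))$ is needed.
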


\begin{proof}
  Let $\varphi:\Phi(X,d_X)\to\Phi(Y,d_Y)$ be an affine
  homeomorphism. Note that $\varphi$ maps nonisolated
  extreme points of $\Phi(X,d_X)$ to nonisolated extreme
  points of $\Phi(Y,d_Y)$, so, by Claim \ref{nonisolated},
  $\varphi''\ext(S(X,d_X))=\ext(S(Y,d_Y))$. Now, $X\subseteq
  S(X,d_X)$ and $Y\subseteq S(Y,d_Y)$ are dense $G_\delta$
  sets, so there is a comeager set $X'\subseteq X$ such that
  $Y'=\varphi'' X'$ is comeager in $Y$. Note that since
  $\varphi$ preserves the topological and affine structure,
  (\ref{eq:last}) is preserved by $\varphi$ and so Lemma
  \ref{logic} implies that for $x_1,x_2\in X'$ we have
  $d_Y(\varphi(x_1),\varphi(x_2)=d_X(x_1,x_2)$. This means
  that $(X',d_X)$ and $(Y',d_Y)$ are isometric. Since $X'$
  is comeager in $X$ and $Y'$ is comeager in $Y$, the spaces
  $X$ and $Y$ are isometric as well.
\end{proof}

Theorem \ref{main} now follows from Propositions
\ref{isomperfect}, \ref{homo} and \ref{final}.

\section{Concluding remarks and open questions}
\label{sec:questions}

A major problem \cite[Problem 5.2]{clemens.gao.kechris} that
is still left open is that of the complexity of the
homeomorphism relation for compact metric spaces. As shown
by Kechris and Solecki (see also \cite[Theorem 1.4]{ftt} for
a new proof of this result), it is an orbit equivalence
relation that is bi-reducible with an action of the group of
homeomorphisms of the Hilbert cube. Homeomorphism of compact
metric spaces is Borel reducible to the affine homeomorphism
of Bauer simplices and it is not known whether it is also a
complete orbit equivalence relation. As a comment to this
problem, let us note that from the point of view of Banach
space theory, this is the question whether the isometry
problems for general separable Banach spaces and for
separable Banach spaces of the form $C(K)$, have the same
complexities.

In \cite[Theorem 7.3]{ftt} Farah, Toms and T\"ornquist
showed that the isomorphism of separable simple nuclear
C*-algebras is below an action of the automorphism group of
the Cuntz algebra $\textrm{Aut}(\mathcal{O}_2)$. Since
complete orbit equivalence relations are typically induced
by actions of universal Polish groups, the following
question is natural.

\begin{question}
  Is the group $\textrm{Aut}(\mathcal{O}_2)$ a universal
  Polish group?
\end{question}

\bibliographystyle{plain}
\bibliography{refs}

\end{document}